\documentclass[12pt]{article}

\usepackage{fullpage}
\usepackage{amssymb}
\usepackage{amsmath}
\usepackage{amsthm}
\usepackage{graphicx}
\usepackage{multirow}
\usepackage{epsfig}
\usepackage{epstopdf}
\usepackage{color}
\usepackage{hyperref}
\usepackage[capitalise]{cleveref}

\usepackage[backend=bibtex,giveninits,block=space,sortcites=true,maxbibnames=99]{biblatex}
\usepackage{doi}
\addbibresource{GenusPoly.bib}

\renewbibmacro{in:}{}
\DeclareFieldFormat[article]{volume}{\mkbibbold{#1}}
\DeclareFieldFormat[article]{number}{(#1)}
\DeclareFieldFormat{doi}{\href{https://dx.doi.org/#1}{\textsc{doi}}}
\AtEveryBibitem{\clearfield{issn}\clearfield{isbn}\clearlist{language}}
\renewbibmacro*{volume+number+eid}{%
  \printfield{volume}%
  \printfield{number}%
  \setunit{\bibeidpunct}%
  \printfield{eid}}
\def\abx@missing@entry#1{\abx@missing{#1??}}

\usepackage{authblk}

\newtheorem{thm}{Theorem}[section]
\newtheorem{prop}[thm]{Proposition}

\newtheorem{lemma}[thm]{Lemma}
\newtheorem{coro}[thm]{Corollary}

\theoremstyle{remark}

\newtheorem{rmk}[thm]{Remark}

\newtheorem{conj}[thm]{Conjecture}
\newtheorem{pro}[thm]{Problem}


\newcommand{\naturals}{\mathbb{N}} 
\newcommand{\reals}{\mathbb{R}} 
\newcommand{\rational}{\mathbb{Q}} 

\newcommand{\prob}{\mathbb{P}} 
\newcommand{\expect}{\mathbb{E}} 
\newcommand{\var}{\mathbf{Var}} 
\newcommand{\convd}{\xrightarrow{d}} 
\newcommand{\normaldist}{\mathcal{N}} 

\newcommand{\eps}{\varepsilon} 
\newcommand{\dif}{\mathrm{d}} 

\newcommand{\graphseq}{\mathcal{G}} 
\newcommand{\eulergenus}{\gamma^{E}} 
\newcommand{\Eulergenus}{\mathcal{E}} 
\newcommand{\starladder}{\operatorname{SL}} 
\newcommand{\halfladder}{\operatorname{HL}} 

\newcommand{\vertamal}{\circledast} 
\newcommand{\proj}{\Pi} 
\newcommand{\rotsys}{\mathcal{R}} 
\newcommand{\faceelem}{\mathbf{R}} 
\newcommand{\cyc}{\operatorname{cyc}} 
\newcommand{\faceproj}{\Phi} 
\newcommand{\signrotsys}{\mathcal{R}_{\pm}} 

\newcommand{\swapping}{\operatorname{Sw}} 


\newcommand{\tdef}[1]{\textcolor{blue}{\emph{#1}}}

\begin{document}

\title{Asymptotic normality of embedding distributions of some families of graphs}

\author{Yichao Chen\thanks{\href{mailto:chengraph@163.com}{chengraph@163.com}.  Partially supported by the NNSFC under Grant No. 12271392.}}
\affil{School of Mathematics, Suzhou University of Science and Technology, Suzhou, 215009, China}
\author{Wenjie Fang\thanks{\href{mailto:wenjie.fang@univ-eiffel.fr}{wenjie.fang@univ-eiffel}. Partially supported by ANR IsOMA (ANR-21-CE48-0007) and ANR CartesEtPlus (ANR-23-CE48-0018).}}
\affil{Univ Gustave Eiffel, CNRS, LIGM, F-77454 Marne-la-Vall\'ee, France}
\author{Zhicheng Gao\thanks{\href{mailto:zhichenggao@cunet.carleton.ca}{zhichenggao@cunet.carleton.ca}. Partially supported by Carleton University Development Grant (No.189035)}}
\affil{School of Mathematics and Statistics, Carleton University, Ottawa, Ontario, K1S5B6, Canada}
\author{Jinlian Zhang\thanks{\href{mailto:jinlian916@hnu.edu.cn}{jinlian916@hnu.edu.cn}. Partially supported by the NNSFC under Grant No. 12201198.}}
\affil{School of Mathematics and Statistics, Hunan University of Finance and Economics, Changsha, China}

\maketitle

\centerline{\small Mathematics Subject Classification: 05A15, 05A16, 05C10}

\begin{abstract}
  Computing the embedding distribution of a given graph is a fundamental question in topological graph theory. In this article, we extend our viewpoint to a sequence of graphs and consider their asymptotic embedding distributions, which are often the normal distribution. We establish the asymptotic normality of several families of graphs by developing adapted tools and frameworks. We expect that these tools and frameworks can be used on other families of graphs to establish the asymptotic normality of their embedding distributions. Several open questions and conjectures are also raised in our investigation.

  \smallskip

  \noindent{\bf Keywords:} embedding, genus, Euler-genus, graph, normal distribution
\end{abstract}


\section{Introduction}

In topological graph theory, we are interested in cellular embeddings of graphs on surfaces of varying genus, orientable or not. For a generic connected graph, it may have cellular embeddings on several surfaces with different genera, and the number of such embeddings, counted up to homeomorphism, may vary depending on the surface. Thus comes the natural question of studying the distributions of surface genus for cellular embeddings of a given graph $G$, either for orientable surfaces or general surfaces. We call such distributions the embedding distribution of $G$.

In the study of embedding distributions, there is a long-standing conjecture by Gross, Robbins and Tucker \cite{GroRobTuc89} stating that the genus distribution of every graph is log-concave. Although confirmed in many cases, such as in \cite{stahl-genus,LW07,CL2010,GMT14,GMTW16,GrossMTW16,GROSS2016499,Carr2025,Wang2021,CGMT20}, there is a recent unpublished counter-example by Mohar \cite{mohar-counter-example} against this conjecture.

Instead of looking at the embedding distributions of one single graph, we may also consider families of structurally related graphs. In this case, when the size of graphs tends to infinity, it is often the case that the embedding distribution converges to a normal distribution up to proper rescaling \cite{ZPC}, unless structurally constrained \cite{unique-genus-graph,Sir91}. Furthermore, the mean and the variance of such embedding distributions often grows linearly to the size of the graphs. Intuitively, we may consider such a phenomenon in the framework of central limit theorem, as the genus or the Euler-genus depends on the local embedding of edges around each vertex, whose effects are not strongly correlated in general. However, such results are difficult to formulate and then to prove in generality. For a given graph family, usually we need to compute all their embedding distributions to show that they are asymptotically normal.

In this article, we establish the asymptotic normality of the embedding distributions of several graph families, while giving fundamental tools and frameworks that may be useful for other graph families.

This article is organized as follows. In \Cref{sec:prelim}, we lay out the necessary definitions. Then in \Cref{sec:tree-like} we establish asymptotic normality for tree-like graphs using the central limit theorem. In \Cref{sec:perturbation}, we show how to use a perturbative point of view to establish asymptotic normality for ring-like graphs and strictly monotone sequences of graphs using only limited information about their embedding distributions. In \Cref{sec:double-edge-cycle}, we showcase on the special family of double edge cycle graphs how we may get directly the limits of embedding distributions when the related generating functions can be computed explicitly. In \Cref{sec:h-families}, we show that $H$-linear and $H$-circular families have rational genus and Euler-genus generating functions using group algebra, which allows us to implement the algorithm and compute such generating functions automatically. Such a tool allows the approach in \Cref{sec:double-edge-cycle} to be applied to more general cases, including the genus distribution of grid graphs studied by Khan, Poshni and Gross in \cite{grid-genus}. Along the way, we also propose several open problems and conjectures.

\section{Preliminary} \label{sec:prelim}

A (non-oriented) \tdef{graph} $G = (V, E)$ consists of a vertex set $V$ and an edge set $E$ which is a multiset of unordered pairs of elements in $V$. In the following, we consider only connected graphs unless otherwise specified. An \tdef{embedding} of graph $G$ into a surface $S$ is a drawing of the graph on the surface $S$ in such a way that its edges intersect only at their endpoints, defined up to continuous perturbation. In the following, we only consider surfaces that are connected and closed. The classification of surfaces tells us that any such surface $S$ is homeomorphic to one of the following cases:
\begin{itemize}
\item The \tdef{orientable surface} $O_k$ with $k \geq 0$, which is a sphere with $k$ handles attached, and $k$ is called its \tdef{genus};
\item The \tdef{non-orientable surface} $N_j$ with $j \geq 1$, which is a sphere with $j$ crosscaps attached, and $j$ is called its \tdef{crosscap-number}.
\end{itemize}
Given an embedding $M$ of a graph $G$ on the surface $S$, the connected components of its complement are called its \textit{faces}. In this article, we consider only \tdef{cellular embeddings}, meaning that all its faces are topological disks.

We denote by $[n]$ the set of integers from $1$ to $n$. Given a graph $G = (V, E)$, a \tdef{dart} is a pair $(u, e)$ with $u \in V$, $e \in E$, and $u$ adjacent to $e$. When $e$ is a loop, we distinguish the two ends of $e$, giving thus two darts. There are exactly $2|E|$ darts in $G$. A \tdef{rotation system} $(\sigma, \tau)$ of $G$ is a pair of permutations of all the darts such that all elements in the same cycle of $\sigma$ are darts containing the same vertex, and every cycle of $\tau$ are of length $2$ and contains darts with the same edge but different vertices. We note that $\tau$ is fixed. Rotation systems describes embeddings on orientable surfaces, with cycles in $\sigma$ describing how darts are clockwise ordered around each vertex, and cycles in $\tau$ about which darts are from the same edge. It is clear that cycles in $\sigma \tau$ correspond to faces, giving half of the darts in the same face in counterclockwise order. We denote by $\rotsys(G)$ the set of rotation systems of $G$.

For embeddings on general surfaces, they can be described combinatorially by a \tdef{signed rotation system} $(\sigma, \tau)$, which is the same as a rotation system but with $\sigma, \tau$ being sign-symmetric permutations over the darts and their formal opposites, that is, $\sigma(-(u, e)) = -\sigma((u, e))$, along with the same conditions as in rotation systems. Here, we note that there are several possibilities for $\tau$. We denote by $\signrotsys(G)$ the set of signed rotation systems of $G$. An edge is \tdef{twisted} if one of its corresponding cycles contains both a dart and the formal opposite of the other dart. More detailed discussions about combinatorial representations of embeddings can be found in \cite[Chap. 3]{MT01}.  Sign-symmetric permutations of the darts and their formal opposites form a group called the \tdef{type-B Coxeter group} over the darts. More generally, the type-B Coxeter group over an element set $E$ can be regarded as the group of sign-symmetric permutations over the element set $E \cup \overline{E}$, where an element $\overline{e} \in \overline{E}$ is the formal opposite of $e \in E$.

Let $v(M)$ (resp. $e(M)$, $f(M)$) be the number of vertices (resp. edges, faces) of a graph embedding $M$ on a surface $S$. By Euler's formula, we have
\[
  v(M) - e(M) + f(M) = 2 - \eulergenus(S),
\]
where $\eulergenus(S)$ is the \tdef{Euler-genus} of the surface $S$. Specifically, $\eulergenus(S) = 2k$ if $S$ is homeomorphic to the orientable surface $O_k$, and $j$ if $S$ is homeomorphic to the non-orientable surface $N_j$.

As a graph may be embedded on several different surfaces, we are interested in the number of embeddings of different genus. Let $\gamma_i(G)$ denote the number of embeddings of graph $G$ on the orientable surface $O_i$ with genus $i$. The sequence $\gamma_0(G), \gamma_1(G), \gamma_2(G), \ldots$ is the \tdef{genus distribution} of graph $G$. The \tdef{genus polynomial} of graph $G$ is the polynomial
\[
  \Gamma_G(x)=\sum_{i \geq 0} \gamma_i(G) x^i.
\]
Similarly, the \tdef{crosscap-number polynomial} of graph $G$ is defined as
\[
  \tilde{\Gamma}_G(y) = \sum_{j \geq 1} \tilde{\gamma}_j(G) y^{j},
\]
where $\tilde{\gamma}_j(G)$ is the number of embeddings of graph $G$ on the non-orientable surface $N_j$ with crosscap-number $j$. The sequence $(\tilde{\gamma}_j(G))_{j \geq 1}$ is referred to as the \tdef{crosscap-number distribution} of graph $G$. We then define the \tdef{Euler-genus polynomial} $E_G(x)$ of $G$ by
\[
  E_G(x) = \Gamma_G(x^2) + \tilde{\Gamma}_G(x).
\]
The \tdef{Euler-genus distribution} of $G$ is thus the sequence
\[
  \gamma_0(G), \tilde{\gamma}_1(G), \gamma_1(G) + \tilde{\gamma}_2(G), \ldots, \tilde{\gamma}_{2j-1}(G), \gamma_{j}(G) + \tilde{\gamma}_{2j}(G), \ldots.
\]
We refer to both the genus distribution and the Euler-genus distribution of a graph $G$ as its \tdef{embedding distributions}.

The \tdef{minimum genus} and \tdef{maximum genus} of a graph $G$, denoted by $\gamma_{\min}(G)$ and $\gamma_{\max}(G)$ are important parameters for graphs on their embeddings. They are given by
\[
  \gamma_{\min}(G) = \min \{i : \gamma_i(G) > 0\}, \quad \gamma_{\max}(G) = \max \{i : \gamma_i(G) > 0\}.
\]
Similarly, we define the \tdef{minimum Euler-genus} and \tdef{maximum Euler-genus} of the graph $G$, denoted by $\eulergenus_{\min}(G)$ and $\eulergenus_{\max}(G)$ respectively.

Given a graph $G$, we define its \tdef{genus law} (resp. \tdef{Euler-genus law}) to be the random variable $\gamma_G$ (resp. $\eulergenus_G$) of the genus (resp. Euler-genus) of a uniform random cellular embedding of $G$. As $\gamma_G$ is always a positive integer, we may define its \tdef{probability generating function}, denoted by $F_{\gamma_G}(x)$, to be
\[
  F_{\gamma_G}(x) = \sum_{k \geq 0} \prob[\gamma_G = k] x^k.
\]
It is clear that  $F_{\gamma_G}(x)$ is equal to the \tdef{normalized genus polynomial} $\Gamma_G(x) / \Gamma_G(1)$.

The \tdef{Betti number} of a graph $G$ is given by $\beta(G) = e(G) - v(G) + 1$. For a graph $G$, we have $\frac{\Gamma_{G}(1)}{E_{G}(1)} = 2^{-\beta(G)}$ \cite{CG18}. This implies that, when the Betti number $\beta(G)$ is large, there are far more non-orientable embeddings than orientable ones. Furthermore, it suggests that, when $G$ is large, the Euler-genus distribution of $G$ approaches normality if and only if the crosscap-number distribution does so.

For simplicity, given a sequence of graphs $\graphseq = G_1, G_2, \ldots$, we denote the genus polynomial (resp. Euler-genus polynomial) of the graph $G_n$ by $\Gamma_{n}(x)$ (resp. $E_{n}(x)$).

Given a sequence of real random variables $(X_n)_{n \geq 1}$, we say that $X_n$ \tdef{converges in distribution} to a random $X$ with continuous cumulative distribution function, denoted by $X_n \convd X$, if the cumulative distribution function of $X_n$ converges to that of $X$ when $n \to \infty$. A sequence of random variables $(X_n)_{n \geq 1}$ is \tdef{asymptotically normal} if $(X_n - \expect X_n) / \sqrt{\var X_n} \convd \normaldist(0, 1)$. Let $\graphseq = (G_n)_{n \geq 1}$ be a sequence of graphs. We say that the genus distribution of $\graphseq$ is \tdef{asymptotically normal} if the sequence of genus laws $(\gamma(G_n))_{n \geq 1}$ is asymptotically normal. It is equivalent to say that, with some mean $e_n \in \reals$ and variance $v_n \in (0,\infty)$, we have
\[
 \lim_{n\rightarrow \infty }  \left| \frac{1}{\Gamma_{G_n}(1)} \sum_{0 \leq i \leq e_n+x\sqrt{v_n}} \gamma_{i}(G_n) - \int_{-\infty}^x \frac{e^{-u^2 / 2}}{\sqrt{2\pi}} \dif u \right| = 0,~~\forall x\in \mathbb{R}.
\]
The same applies to the definitions of the corresponding Euler-genus distribution.

In the following sections, we study the asymptotic normality of embedding distributions of several different families of graphs, with methods that can be potentially generalized to become applicable to a larger range of graphs.

\section{Tree-like graphs and central limit theorem} \label{sec:tree-like}

The focus of this section is the asymptotic embedding distribution for a sequence of tree-like graphs obtained by an operation on graphs called ``bar-amalgamation''. Let $G, H$ be two graphs, and $u$ (resp. $v$) a vertex of $G$ (resp. $H$). The \textit{bar-amalgamation} $G \oplus_{uv} H$ is defined as the new graph obtained by linking $G$ and $H$ by the new edge $\{u, v\}$. Let $d_u$ (resp. $d_v$) be the degree of $u$ (resp. $v$) in $G$ (resp. $H$). The genus and Euler-genus polynomials of $G \oplus_{uv} H$ are known from \cite{GF87,CG18}.
\begin{thm}[{\cite[Theorem~5]{GF87}}, {\cite[Theorem~5.2]{CG18}}]\label{thm:bar}
  For two graphs $G$ and $H$, with $u$ (resp. $v$) a vertex of $G$ (resp. $H$). Then
  \begin{align*}
    \Gamma_{G \oplus_{uv} H}(x) &= d_u d_v \Gamma_G(x) \Gamma_H(x), \\
    \Eulergenus_{G \oplus_{uv} H}(x) &= d_u d_v \Eulergenus_G(x) \Eulergenus_H(x).
  \end{align*}
\end{thm}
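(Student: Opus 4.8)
The plan is to derive both identities from one combinatorial bijection on (signed) rotation systems, reading off the coefficient identities genus by genus. I would first treat the orientable case. Let $b_u$ and $b_v$ denote the two darts of the new bridge edge $\{u,v\}$, at $u$ and at $v$ respectively; the dart set of $G \oplus_{uv} H$ is the disjoint union of the dart sets of $G$ and $H$ together with $b_u$ and $b_v$, and $\tau$ is obtained from the factors by adjoining the $2$-cycle $(b_u\, b_v)$. A rotation system in $\rotsys(G \oplus_{uv} H)$ is then determined by a rotation system of $G$, a rotation system of $H$, and a choice of where to splice $b_u$ into the $\sigma$-cycle around $u$ and $b_v$ into the $\sigma$-cycle around $v$. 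As the cycle around $u$ has $d_u$ gaps and that around $v$ has $d_v$ gaps, there are exactly $d_u d_v$ such splicings, and
\[
  (\sigma_G, \sigma_H, \text{gap at } u, \text{gap at } v) \longmapsto \sigma
\]
is a bijection, with inverse deleting $b_u, b_v$ and recording the two gaps. This produces the factor $d_u d_v$; what remains is to track the genus across the bijection.

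The geometric input is that $\{u,v\}$ is a cut edge, so in every embedding its two sides lie on a single face: tracing that face one crosses the bridge from $u$ to $v$, runs through $H$, and returns across it from $v$ to $u$. In terms of the face permutation $\sigma\tau$, splicing $b_u$ into a gap at $u$ and $b_v$ into a gap at $v$ fuses the $\sigma_G\tau_G$-face incident to the chosen gap at $u$ with the $\sigma_H\tau_H$-face incident to the chosen gap at $v$ into a single face, leaving all other faces untouched; since these two faces belong to the two different components they are always distinct, so the fusion decreases the face count by exactly one, independently of the chosen gaps. Thus for an embedding $M$ assembled from $M_G$ and $M_H$,
\[
  v(M) = v(M_G) + v(M_H), \quad e(M) = e(M_G) + e(M_H) + 1, \quad f(M) = f(M_G) + f(M_H) - 1.
\]
Substituting into Euler's formula gives $2 - \eulergenus(M) = (2 - \eulergenus(M_G)) + (2 - \eulergenus(M_H)) - 2$, so the Euler-genus is additive; in the orientable case this reads $\gamma(M) = \gamma(M_G) + \gamma(M_H)$. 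Grouping the bijection by genus yields $\gamma_k(G \oplus_{uv} H) = d_u d_v \sum_{i + j = k} \gamma_i(G)\gamma_j(H)$, which is exactly $\Gamma_{G \oplus_{uv} H}(x) = d_u d_v\, \Gamma_G(x)\Gamma_H(x)$.

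For the Euler-genus identity I would rerun this with signed rotation systems in $\signrotsys$. The splicing bijection is unchanged and again contributes $d_u d_v$; the single new point is the sign (twist) of the bridge. Because $\{u,v\}$ is a cut edge, the sign-symmetry at $u$ that reverses the local rotation toggles the bridge between its twisted and untwisted states while only reflecting the $G$-side, hence preserving both the Euler-genus and the embedding it represents; so within every class of signed rotation systems one may normalise the bridge to be untwisted, and this normalisation introduces no multiplicity. A consistency check is $\beta(G \oplus_{uv} H) = \beta(G) + \beta(H)$, which via $\Eulergenus_G(1) = 2^{\beta(G)}\Gamma_G(1)$ gives $\Eulergenus_{G \oplus_{uv} H}(1) = d_u d_v\, \Eulergenus_G(1)\Eulergenus_H(1)$, matching the claimed identity at $x = 1$. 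With the bridge untwisted the face-fusion and the three counting identities above persist, the general Euler formula again gives additivity of the Euler-genus, and summing over the bijection yields $\Eulergenus_{G \oplus_{uv} H}(x) = d_u d_v\, \Eulergenus_G(x)\Eulergenus_H(x)$. The hard part is precisely this local analysis: verifying that a cut edge fuses exactly one face from each side into one regardless of the splice positions, and that the edge-sign bookkeeping (normalising the bridge to untwisted) neither gains nor loses embeddings nor disturbs the Euler-genus. Once that is nailed down, both product formulas follow formally.
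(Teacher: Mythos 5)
The paper does not actually prove this theorem: it is imported verbatim as a known result, citing Gross--Furst \cite{GF87} for the genus identity and Chen--Gross \cite{CG18} for the Euler-genus identity. So there is no internal proof to compare against; your proposal supplies a self-contained argument, and it is the standard one, correctly executed. The orientable half is complete: the splice bijection (insert $b_u$ into one of the $d_u$ gaps at $u$ and $b_v$ into one of the $d_v$ gaps at $v$) accounts exactly for the factor $d_u d_v$, the face-fusion observation $f(M) = f(M_G) + f(M_H) - 1$ is the correct local analysis for a bridge joining two distinct components (the two fused faces can never coincide), and Euler's formula then gives additivity of genus, hence the convolution $\gamma_k(G \oplus_{uv} H) = d_u d_v \sum_{i+j=k} \gamma_i(G)\gamma_j(H)$ and the product formula. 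Your consistency check at $x=1$ via $\Eulergenus_G(1) = 2^{\beta(G)}\Gamma_G(1)$ and $\beta(G \oplus_{uv} H) = \beta(G) + \beta(H)$ is also sound.

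One point in the non-orientable half deserves sharper wording. You say the bridge can be untwisted by ``the sign-symmetry at $u$ that reverses the local rotation \ldots{} while only reflecting the $G$-side.'' A flip at the single vertex $u$ toggles the signs of \emph{all} edges incident to $u$, not just the bridge; the move you actually need is to flip every vertex on the $G$-side simultaneously, so that each internal edge of $G$ has both endpoints flipped (sign unchanged) while the bridge, having only one endpoint on that side, is toggled. Your phrase suggests you intend exactly this, but as written it could be read as a single-vertex flip, which does not do the job. Relatedly, the cleanest way to make ``this normalisation introduces no multiplicity'' rigorous under the counting convention of \cite{CG18} (where $\Eulergenus_G(1) = 2^{\beta(G)}\Gamma_G(1)$) is to fix spanning trees $T_G$ of $G$ and $T_H$ of $H$, note that $T_G \cup T_H \cup \{uv\}$ is a spanning tree of $G \oplus_{uv} H$, and count embeddings as pairs (rotation system, signs on co-tree edges): the co-tree edges of the amalgamation are exactly those of $G$ together with those of $H$, so the sign choices factor and the bridge is automatically untwisted, with no quotienting left to track. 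With that adjustment your argument is a complete and correct proof of both identities.
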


We thus have the following corollary in terms of genus distribution.
\begin{coro} \label{coro:bar}
  For two graphs $G$ and $H$, with $u$ (resp. $v$) a vertex of $G$ (resp. $H$), we have
  \[
    \gamma_{G \oplus_{uv} H} = \gamma_G + \gamma_H, \quad \eulergenus_{G \oplus_{uv} H} = \eulergenus_G + \eulergenus_H.
  \]
  Here, $\gamma_G$ and $\gamma_H$ are considered as independent, and the same holds for $\eulergenus_G$ and $\eulergenus_H$.
\end{coro}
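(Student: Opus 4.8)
The plan is to reduce the distributional identity to a factorization of probability generating functions, which is the standard characterization of an independent sum. Recall from the preliminaries that the probability generating function of the genus law $\gamma_G$ is the normalized genus polynomial $F_{\gamma_G}(x) = \Gamma_G(x) / \Gamma_G(1)$, and that a nonnegative-integer-valued random variable $Z$ has the same distribution as an independent sum $X + Y$ if and only if $F_Z(x) = F_X(x) F_Y(x)$. So it suffices to show that the probability generating function of $\gamma_{G \oplus_{uv} H}$ factors as $F_{\gamma_G}(x) \, F_{\gamma_H}(x)$, and likewise for the Euler-genus law.

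First I would write out the probability generating function using \Cref{thm:bar}. By definition,
\[
  F_{\gamma_{G \oplus_{uv} H}}(x) = \frac{\Gamma_{G \oplus_{uv} H}(x)}{\Gamma_{G \oplus_{uv} H}(1)} = \frac{d_u d_v \Gamma_G(x) \Gamma_H(x)}{d_u d_v \Gamma_G(1) \Gamma_H(1)},
\]
where the numerator is supplied directly by \Cref{thm:bar} and the denominator is its evaluation at $x = 1$. The common prefactor $d_u d_v$ cancels, leaving
\[
  F_{\gamma_{G \oplus_{uv} H}}(x) = \frac{\Gamma_G(x)}{\Gamma_G(1)} \cdot \frac{\Gamma_H(x)}{\Gamma_H(1)} = F_{\gamma_G}(x) \, F_{\gamma_H}(x),
\]
which is exactly the desired factorization and establishes $\gamma_{G \oplus_{uv} H} = \gamma_G + \gamma_H$ with the two summands independent.

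For the Euler-genus statement I would repeat the argument verbatim, replacing the genus polynomial $\Gamma$ by the Euler-genus polynomial $\Eulergenus$ throughout. Since the coefficient of $x^k$ in $\Eulergenus_G(x)$ counts the embeddings of $G$ of Euler-genus $k$, the ratio $\Eulergenus_G(x) / \Eulergenus_G(1)$ is the probability generating function of $\eulergenus_G$, and the second identity of \Cref{thm:bar} again yields a multiplicative factorization with the same cancellation of $d_u d_v$, giving $\eulergenus_{G \oplus_{uv} H} = \eulergenus_G + \eulergenus_H$ with independent summands.

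There is no serious obstacle here; the only points requiring care are the conceptual translation in the first paragraph, namely that a product of probability generating functions corresponds to an independent sum, and the legitimacy of the cancellation, which needs the normalization constants $\Gamma_G(1)$, $\Gamma_H(1)$ (respectively $\Eulergenus_G(1)$, $\Eulergenus_H(1)$) to be nonzero. Both hold because every connected graph admits at least one cellular embedding, so these polynomials take positive values at $x = 1$.
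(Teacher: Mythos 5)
Your proposal is correct and follows essentially the same route as the paper: the paper's proof also normalizes the identities of \Cref{thm:bar}, observes that the factor $d_u d_v$ cancels, and concludes that the product of probability generating functions corresponds to an independent sum. Your version just spells out the cancellation and the nonvanishing of the normalization constants, which the paper leaves implicit.
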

\begin{proof}
  We check easily that $d_u, d_v$ in \cref{thm:bar} disappear when normalized, meaning that the generating function of $\gamma_{G \oplus_{uv} H}$ (resp. $\eulergenus_{G \oplus_{uv} H}$) is the product of those of $\gamma_G$ and $\gamma_H$ (resp. $\eulergenus_G$ and $\eulergenus_H$).
\end{proof}

From \cref{coro:bar}, it is easy to see that the genus distribution of successive bar-amalgamation is the sum of the genus distributions of component graphs. This observation immediately opens up a way to show asymptotic normality of genus distribution of bar-amalgamation sequence by some central limit theorem. The following proposition is a simple version of central limit theorem derived from the famous Lindeberg condition. We refer readers to \cite[Chapter~4]{Petrov} for more general versions of the central limit theorem.

\begin{prop} \label{prop:zhang-4}
  Let $(\xi_k)_{k \geq 1}$ be a sequence of independent random variables with finite second moments. Let $B_n = \left(\sum_{k=1}^n \var \xi_k \right)^{1/2}$. Assume that, for a sequence of positive constants $(C_n)_{n \geq 1}$, we have $|\xi_k| \leq C_n$ for all $k \leq n$ and $C_n = o(B_n)$.
  Then $\sum_{k=1}^n \xi_k$ is asymptotically normal with variance $B_n$.
\end{prop}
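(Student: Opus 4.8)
The plan is to verify the Lindeberg condition for the triangular array formed by the $\xi_k$ and then invoke the Lindeberg--Feller central limit theorem, whose statement can be found in \cite[Chapter~4]{Petrov}. Recall that the Lindeberg condition requires that, after centering each variable, for every fixed $\eps > 0$,
\[
  \frac{1}{B_n^2} \sum_{k=1}^n \expect\!\left[ (\xi_k - \expect \xi_k)^2 \, \mathbf{1}_{\{|\xi_k - \expect \xi_k| > \eps B_n\}} \right] \longrightarrow 0 \quad \text{as } n \to \infty.
\]
Since the hypothesis gives a uniform bound $|\xi_k| \leq C_n$ for all $k \leq n$, we also have $|\xi_k - \expect \xi_k| \leq 2 C_n$ by the triangle inequality (as $|\expect \xi_k| \leq C_n$ too). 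The assumption $C_n = o(B_n)$ then forces the indicator events to be eventually empty.

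In more detail, first I would fix $\eps > 0$ and observe that because $C_n = o(B_n)$, there is some $N$ such that $2 C_n < \eps B_n$ for all $n \geq N$. For such $n$ and every $k \leq n$, the event $\{|\xi_k - \expect \xi_k| > \eps B_n\}$ is empty, since $|\xi_k - \expect \xi_k| \leq 2 C_n < \eps B_n$. Consequently each summand in the Lindeberg sum vanishes identically for $n \geq N$, so the whole expression is exactly $0$ for all large $n$, and in particular tends to $0$. This establishes the Lindeberg condition.

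With the Lindeberg condition in hand, the Lindeberg--Feller theorem yields that
\[
  \frac{1}{B_n} \sum_{k=1}^n (\xi_k - \expect \xi_k) \convd \normaldist(0, 1),
\]
which is precisely the statement that $\sum_{k=1}^n \xi_k$ is asymptotically normal, its variance being $B_n^2 = \sum_{k=1}^n \var \xi_k$ by independence. I would take care to note that the hypothesis implicitly requires $B_n \to \infty$ (otherwise $C_n = o(B_n)$ could not hold with the $\xi_k$ nondegenerate), so the normalization is well-defined for large $n$; this is the only mild subtlety. There is essentially no obstacle here: the uniform boundedness combined with $C_n = o(B_n)$ is a clean and stronger-than-needed sufficient condition, making the Lindeberg sum exactly zero rather than merely small, so the main work is simply recalling the correct form of the Lindeberg--Feller theorem and matching notation.
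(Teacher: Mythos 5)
Your proposal is correct and follows essentially the same route as the paper: verify the Lindeberg condition under the hypotheses $|\xi_k| \leq C_n$ and $C_n = o(B_n)$, then invoke the Lindeberg central limit theorem. The only (cosmetic) difference is that you observe the truncation events $\{|\xi_k - \expect \xi_k| \geq \eps B_n\}$ are eventually empty so the Lindeberg sum is exactly zero, whereas the paper bounds it by $4C_n^2/(\eps^2 B_n^2)$ via Chebyshev's inequality; both are valid, and yours is marginally more direct.
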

\begin{proof}
  By Theorem 1 and Remark 2 in \cite[Section 4, Chapter 3]{Shi}, we only need to check the Lindeberg condition: for every $\eps>0$,
\[
  \lim_{n \to \infty} \frac{1}{B_n^2} \sum_{k=1}^n \expect \left[ |\xi_k - \expect \xi_k|^2 I_{\left\{|\xi_k-\expect\xi_k|\geq \eps B_n\right\}} \right] = 0,
\]
where $I_A$ is the indicator function of the event $A$, which is $1$ when $A$ occurs and $0$ otherwise. We check that
\begin{align*}
 \frac{1}{B_n^2}\sum_{k=1}^n \expect \Big[|\xi_k-\expect \xi_k|^2I_{\left\{|\xi_k-\expect\xi_k|\geq \eps B_n\right\}}\Big] &\leq \frac{4C_n^2}{B_n^2}\sum_{k=1}^n\prob(|\xi_k-\expect\xi_k|\geq \eps B_n)
 \\ &\leq \frac{4C_n^2}{B_n^2}\sum_{k=1}^n\frac{\expect |\xi_k-\expect\xi_k|^2}{\eps^2B_n^2} = \frac{4C_n^2}{\eps^2B_n^2}.
\end{align*}
Thus, as $C_n = o(B_n)$, the Lindeberg condition holds.
\end{proof}

Let $(H_n)_{n \geq 1}$ be a sequence of connected graphs. A sequence of \tdef{tree-like graphs} $\graphseq = (G_n)_{n \geq 1}$ is obtained by successive bar-amalgamations on graphs $(H_n)_{n \geq 1}$. More precisely, the tree-like graphs $(G_n)_{n \geq 1}$ are obtained recursively by the following procedure:
\begin{itemize}
\item For $n=1$, $G_1=H_1$;
\item For $n > 1$, $G_n = G_{n-1} \oplus_{uv} H_n$ with $u$ (resp. $v$) a vertex of $G_{n-1}$ (resp. $H_n$).
\end{itemize}

\begin{thm} \label{thm:pp-1}
  Let $(G_n)_{n \geq 1}$ be a sequence of tree-like graphs constructed from some graph sequence $(H_n)_{n \geq 1}$ where each graph with at most $M$ edges for some fixed $M$, such that $\gamma_{\max}(H_n) > \gamma_{\min}(H_n)$ for infinitely many $n$. Then, the sequence of random variables $(\gamma_{G_n})_{n \geq 1}$ is asymptotically normal. If, instead, we have $\eulergenus_{\max}(H_n) > \eulergenus_{\min}(H_n)$ for infinitely many $n$, the sequence of random variables $(\eulergenus_{G_n})_{n \geq 1}$ is asymptotically normal.
\end{thm}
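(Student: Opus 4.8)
The plan is to apply \cref{prop:zhang-4} to the sequence of independent random variables $\xi_k = \gamma_{H_k}$ (resp.\ $\xi_k = \eulergenus_{H_k}$), using \cref{coro:bar} to identify $\gamma_{G_n}$ with the partial sum $\sum_{k=1}^n \xi_k$. By iterating \cref{coro:bar} along the recursive construction of the tree-like graphs, we obtain that $\gamma_{G_n} = \sum_{k=1}^n \gamma_{H_k}$ as a sum of independent random variables, and similarly for the Euler-genus. The whole task thus reduces to verifying the two hypotheses of \cref{prop:zhang-4}: a uniform almost-sure bound $|\xi_k| \leq C_n$ for $k \leq n$, and the growth condition $C_n = o(B_n)$ where $B_n = (\sum_{k=1}^n \var \xi_k)^{1/2}$.

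The uniform bound is the easy half. Since each $H_k$ has at most $M$ edges, the genus of any embedding of $H_k$ is at most its Betti number, hence bounded by $M$; the Euler-genus is likewise bounded by $2M$ (via Euler's formula, a cellular embedding with $\geq 1$ face gives $\eulergenus \leq 2 - v + e \leq 2e \leq 2M$). So I would simply take $C_n = 2M$, a constant independent of $n$. The real content is then showing $B_n \to \infty$, i.e.\ $\sum_{k=1}^n \var \xi_k \to \infty$, so that the constant bound $C_n = 2M$ becomes negligible relative to $B_n$.

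The key step, and the main obstacle, is a lower bound on the variance. The hypothesis is that $\gamma_{\max}(H_k) > \gamma_{\min}(H_k)$ for infinitely many $k$; I must translate this into $\var \gamma_{H_k}$ being bounded below by a positive constant for those $k$. The point is that $\gamma_{\max}(H_k) > \gamma_{\min}(H_k)$ means the random variable $\gamma_{H_k}$ takes at least two distinct integer values with positive probability, so its variance is strictly positive. To get a \emph{uniform} lower bound, I would use that $H_k$ has at most $M$ edges: there are only finitely many graphs (up to isomorphism) with at most $M$ edges, so among those whose genus is non-constant, the minimum variance over this finite set is a strictly positive constant $c_M > 0$. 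Hence for every index $k$ in the infinite set where $\gamma_{\max}(H_k) > \gamma_{\min}(H_k)$, we have $\var \gamma_{H_k} \geq c_M$. Since this set is infinite, $\sum_{k=1}^n \var \xi_k \geq c_M \cdot |\{k \leq n : \gamma_{\max}(H_k) > \gamma_{\min}(H_k)\}| \to \infty$, giving $B_n \to \infty$ and thus $C_n = 2M = o(B_n)$.

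With both hypotheses verified, \cref{prop:zhang-4} yields that $\sum_{k=1}^n \gamma_{H_k} = \gamma_{G_n}$ is asymptotically normal, proving the first assertion. The Euler-genus case is entirely parallel: replace $\gamma$ by $\eulergenus$ throughout, use the bound $\eulergenus \leq 2M$ for $C_n$, and invoke the finiteness-of-graphs argument to get a uniform positive lower bound on $\var \eulergenus_{H_k}$ whenever $\eulergenus_{\max}(H_k) > \eulergenus_{\min}(H_k)$. I expect the only subtlety to require care is the uniform variance lower bound; once the finiteness of graphs with bounded edge count is invoked, the argument closes cleanly.
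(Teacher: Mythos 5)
Your proof is correct and follows essentially the same route as the paper: iterate \cref{coro:bar} to write $\gamma_{G_n} = \sum_{k=1}^n \gamma_{H_k}$, bound the summands by a constant via Euler's formula, get a uniform positive variance lower bound from the finiteness of graphs with at most $M$ edges, and conclude with \cref{prop:zhang-4}. The only (cosmetic) difference is that the paper first extracts a subsequence on which every $H_n$ has non-constant genus, obtaining $B_n^2 = \Theta(n)$, whereas you keep the full sequence and argue $B_n \to \infty$ from the infinitude of positive-variance indices, which is equally valid and avoids the extraction step.
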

\begin{proof}
  We only deal with the case of $\gamma_{G_n}$, as the case of $\eulergenus_{G_n}$ works the same. Up to extracting an infinite sub-sequence, we can strengthen the second condition to $\gamma_{max}(H_n) > \gamma_{min}(H_n)$ for all $n \geq 1$. We recall that $\gamma_{H_n}$ is the genus law of $H_n$. First, we observe by Euler's formula that, as $H_n$ has at most $M$ edges, its genus is bounded by $M/2$.

  By the definition of tree-like graphs and \Cref{coro:bar}, we have $\gamma_{G_n} = \sum_{k = 1}^n \gamma_{H_k}$.  Take $B_n = \left(\sum_{k = 1}^n \var \gamma_{H_k}\right)^{1/2}$. We have $B_n^2= \Theta(n)$, as there is finitely many such $H$ with a bounded number of edges, the non-zero variance on $\gamma_{H}$ is bounded on both sides. Taking $C_n = M/2$, we check that the conditions in \Cref{prop:zhang-4} are satisfied, meaning that $(\gamma_{G_n})_{n \geq 1}$ is asymptotically normal.
\end{proof}

\begin{rmk} \label{rmk:genus-discrepany-struct}
  It is known that, for a connected graph $H$, its maximum genus (resp. Euler-genus) equals to its minimum genus (resp. Euler-genus) if and only if $H$ is a cactus graph (resp. a tree graph) \cite{unique-genus-graph,Sir91,GT87} We can thus replace the genus discrepancy condition in \cref{thm:pp-1} by a condition on the graph structure. 
\end{rmk}

With a better lower bound on the variance of genus law or Euler-genus law, we can strengthen \Cref{thm:pp-1}. We show an example of such strengthening below. We recall that the \tdef{girth} of a graph $H$ is the length of the shortest cycle in $H$.

\begin{thm} \label{thm:pp-girth}
  Let $(G_n)_{n \geq 1}$ be a sequence of tree-like graphs constructed from some graph sequence $(H_n)_{n \geq 1}$ of graphs of girth at most $k$ and maximal degree of vertices at most $\Delta$, such that $H_n$ is not a tree graph and has $o(n^{1/2})$ edges for each $n$. Then, the sequence of random variables $(\eulergenus_{G_n})_{n \geq 1}$ is asymptotically normal.
\end{thm}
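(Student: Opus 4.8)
The plan is to follow exactly the route of \Cref{thm:pp-1}: realize $\eulergenus_{G_n}$ as a sum of independent summands and feed it into \Cref{prop:zhang-4}. The only genuinely new ingredient is a lower bound on the variance of each summand that is \emph{uniform} over the family $(H_n)$, since now the graphs $H_n$ are allowed to grow in size rather than being drawn from a finite set. By \Cref{coro:bar} and the recursive definition of tree-like graphs we again have $\eulergenus_{G_n} = \sum_{k=1}^n \eulergenus_{H_k}$ with the $\eulergenus_{H_k}$ independent. Writing $\xi_k = \eulergenus_{H_k}$ and $B_n = \left(\sum_{k=1}^n \var \xi_k\right)^{1/2}$, it suffices to exhibit constants $C_n$ with $|\xi_k| \le C_n$ for all $k \le n$ and $C_n = o(B_n)$.

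For the upper bound I would invoke Euler's formula: for a connected graph $H$ one has $\eulergenus_{\max}(H) \le \beta(H) \le e(H)$, so $|\xi_k| \le e(H_k)$. Taking $C_n = \max_{k \le n} e(H_k)$ and using the hypothesis $e(H_n) = o(n^{1/2})$, one checks that $C_n = o(n^{1/2})$: for any $\eps > 0$ the bound $e(H_k) < \eps k^{1/2} \le \eps n^{1/2}$ holds for all but finitely many $k \le n$, and the finitely many exceptions contribute only a bounded constant.

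The crux is the variance lower bound: I claim there is a constant $c = c(k,\Delta) > 0$ with $\var \eulergenus_H \ge c$ for every connected non-tree graph $H$ of girth at most $k$ and maximum degree at most $\Delta$. The mechanism I would use is edge-twisting. Since $H$ is not a tree it contains a cycle, and by the girth hypothesis a cycle of length at most $k$; fix an edge $e$ on it, which is necessarily a non-bridge. Toggling the twist of $e$ is a fixed-point-free, measure-preserving involution on the random signed rotation system which, precisely because $e$ lies on a cycle, changes the number of faces, and hence the Euler-genus, by exactly $\pm 1$ (for a bridge the change would be $0$, which is why the tree case is excluded and is consistent with \Cref{rmk:genus-discrepany-struct}). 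Conditioning on all data of the signed rotation system other than the sign of $e$, the conditional law of $\eulergenus_H$ is supported on two consecutive integers, each with probability $1/2$, so its conditional variance equals $1/4$; by the law of total variance $\var \eulergenus_H \ge 1/4$, so in fact any positive constant works. The bounded girth and degree serve to confine the relevant structure to a bounded neighborhood, which is what makes the bound uniform over the whole family and, if one prefers to avoid the topological twisting fact, permits a direct verification over the finitely many local configurations of a short cycle inside a graph of maximum degree $\Delta$.

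With both bounds in hand the proof concludes quickly: since $\var \xi_k \ge 1/4$ for every $k$ we get $B_n^2 = \Theta(n)$, hence $B_n = \Theta(n^{1/2})$, and together with $C_n = o(n^{1/2})$ this gives $C_n = o(B_n)$; \Cref{prop:zhang-4} then yields that $(\eulergenus_{G_n})_{n \ge 1}$ is asymptotically normal. I expect the main obstacle to be the fully rigorous justification of the uniform variance lower bound, namely checking that the twisting operation is a genuine measure-preserving involution on embeddings as counted by the Euler-genus law (in particular that it behaves well under the vertex-flip redundancy of signed rotation systems) and that the face-count argument giving the exact change of $\pm 1$ for a non-bridge edge is carried out carefully.
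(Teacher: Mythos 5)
Your overall architecture --- the decomposition $\eulergenus_{G_n} = \sum_{k=1}^n \eulergenus_{H_k}$ via \Cref{coro:bar}, the upper bound $|\eulergenus_{H_k}| \le e(H_k)$ with $C_n = \max_{k \le n} e(H_k) = o(n^{1/2})$, and the appeal to \Cref{prop:zhang-4} --- is exactly the paper's, and your treatment of $C_n$ (taking the maximum over $k \le n$ and justifying that it is still $o(n^{1/2})$) is even slightly more careful than the paper's wording. The gap is in your variance lower bound, and it is genuine: the claim that toggling the twist of a non-bridge edge always changes the Euler-genus by exactly $\pm 1$ is false. Toggling the twist of $e$ merges two faces (Euler-genus $+1$) when the two sides of $e$ lie on distinct faces, and splits a face (Euler-genus $-1$) when the two sides lie on one face traversed twice in the same direction; but when the two sides of $e$ lie on one face traversed in \emph{opposite} directions --- which is what happens for every edge appearing twice on a face of an \emph{orientable} embedding --- the toggle changes orientability while leaving the face count, hence the Euler-genus, unchanged. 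Concretely, take the bouquet of two loops $a, b$ with rotation $(a,b,a,b)$, both untwisted: this is the torus embedding with the single face $a b a^{-1} b^{-1}$ and Euler-genus $2$; twisting $a$ gives the Klein bottle embedding with single face $a b a b^{-1}$, still one face and Euler-genus $2$. (Simple-graph examples exist as well, e.g.\ any one-face orientable embedding of the diamond graph.) Consequently, after conditioning on all data except the twist of $e$, the conditional law of $\eulergenus_H$ can be a point mass, the conditional variance can be $0$, and your law-of-total-variance argument yields nothing. A red flag you could have caught yourself: your claimed bound $\var \eulergenus_H \ge 1/4$ uses neither the girth bound $k$ nor the degree bound $\Delta$, yet these hypotheses appear in the statement precisely because the true variance constant degrades with them.

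The paper repairs exactly this point by conditioning on an event where the $+1$ jump is guaranteed. Fix a cycle $C$ of length $s \le k$ with an edge $e_1$, and let $\mathrm{CF}$ be the set of embeddings in which $C$ bounds a face (all edges of $C$ untwisted, and at each vertex of $C$ the two darts of $C$ consecutive in the rotation, in the correct direction). On $\mathrm{CF}$, the two sides of $e_1$ lie on distinct faces, so twisting $e_1$ is an injection from $\mathrm{CF}$ into its complement that increases the Euler-genus by exactly $1$. A random embedding lies in $\mathrm{CF}$ with probability at least $(4\Delta)^{-k}$ --- this is where girth and maximal degree enter --- and pairing each $E \in \mathrm{CF}$ with its twist gives, by the law of total variance over this pairing,
\[
  \var \eulergenus_{H_n} \;\ge\; 2 (4\Delta)^{-k} \cdot \frac{1}{4} \;=\; \nu > 0,
\]
a constant depending on $k$ and $\Delta$ but not on $n$. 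With this $\nu$ in place of your $1/4$, the remainder of your argument ($B_n^2 = \Omega(n)$, $C_n = o(B_n)$, then \Cref{prop:zhang-4}) goes through verbatim, just as in the paper and in \Cref{thm:pp-1}.
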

\begin{proof}
  As in the proof of \Cref{thm:pp-1}, we first deal with the case of $\gamma_{G_n}$, and we assume that every $H_n$ contains a cycle.

  We first show that there is some constant $\nu > 0$ such that $\var \gamma_{H_n} \geq \nu$. As the girth of $H_n$ is at most $k$, there is a cycle $C$ formed by edges $e_1, \ldots, e_s$ with $e_i$ linking vertices $v_i$ and $v_{i+1}$, with $v_{s+1} = v_1$, and all vertices involved are distinct. As the graph $H_n$ is labeled implicitly on the vertices, we pick $C$ with minimal labels, and we take $e_1$ to be the edge with minimal labels. Then, the cycle $C$ forms a contour walk of a face when all edges are not twisted, and the related darts are adjacent in the same direction. Let $d_i$ be the degree of $v_i$. We denote by $\mathrm{CF}$ the set of such embeddings. A random embedding is in $\mathrm{CF}$ with probability
  \[
    2^{-s} \prod_{i = 1}^s (2d_i - 2)^{-1} \geq (4 \Delta)^{-k}.
  \]
  For such an embedding $E$, twisting $e_1$ leads to an embedding $E'$ not in $\mathrm{CF}$, and this edge-twisting operation is reversible. Moreover, the Euler-genus of $E'$ is that of $E$ plus $1$, as there is one less face in $E'$ than in $E$. We denote by $\mathrm{CF'}$ the set of such embeddings obtained from those in $\mathrm{CF}$. The variance of Euler-genus in $\mathrm{CF} \cup \mathrm{CF'}$ is thus at least $1/4$, meaning that
  \[
    \var \eulergenus_{H_n} \geq 2 \cdot (4 \Delta)^{-k} \cdot \frac{1}{4}.
  \]
  We thus have the existence of the constant $\nu$.

  By \Cref{coro:bar} and the definition of tree-like graphs, we have $\gamma_{G_n} = \sum_{k=1}^{n} \gamma_{H_k}$. Now take $B_n = \left(\sum_{k = 1}^n \var \gamma_{H_k}\right)^{1/2}$. We have $B_n^2= \Omega(n)$. Take $C_n$ to be the number of edges in $H_n$. We see that the Euler-genus of $H_n$ is bounded by $C_n$. As $C_n = o(n^{1/2})$, we have $C_n = o(B_n)$. The conditions in \Cref{prop:zhang-4} are thus satisfied, and we have the asymptotic normality of $\gamma_{G_n}$.
\end{proof}

\section{Asymptotic normality via perturbation} \label{sec:perturbation}

In this section, we will show the asymptotic normality of genus and Euler-genus distributions for several graph families by observing that such distributions can be split into two parts: a ``major part'' that can be showed easily to converge to the normal distribution, and a ``perturbative part'' that can be ignored. The families we will discuss all come from a construction called \emph{bar-ring}. We now first introduce the related concepts.

A vertex $u$ of a graph $G$ is a \tdef{pendant vertex} if its degree is $1$. We denote by $(G, u, v)$ the graph with two marked pendant vertices $u$ and $v$. We now introduce two \tdef{partial genus distributions} for $(G, u, v)$. For simplicity, sometimes we omit the two marked pendant vertices and write only $G$ when it is clear in the context.

\begin{description}
\item[$\mathbf{d}_G(j)$]: the number of embeddings of $(G, u, v)$ of genus $j$ such that the faces adjacent to $u$ and $v$ are all distinct.
\item[$\mathbf{s}_{G}(j)$]: the number of embeddings of $(G, u, v)$ of genus $j$ where there is at least one face adjacent to both $u$ and $v$.
\end{description}

We define two \tdef{partial genus polynomials} of $(G,u,v)$, denoted by $D_G(x)$ and $S_G(x)$, as
\begin{equation}
  \label{eq:partial-euler-genus-poly}
  D_{G}(x) = \sum_{j \geq 0} \mathbf{d}_{G}(j) x^j, \quad S_{G}(x) = \sum_{j \geq 0} \mathbf{s}_{G}(j)x^j.
\end{equation}

Similarly, by replacing genus with Euler-genus, we define the \tdef{partial Euler-genus distributions} given by $\tilde{\mathbf{d}}_G(j)$ and $\tilde{\mathbf{d}}_G(j)$ and the corresponding \tdef{partial Euler-genus polynomials} $\tilde{D}_G(x)$ and $\tilde{S}_G(x)$.

A \tdef{bar-ring} $G^\circ$ of $n$ graphs $G_1, \ldots, G_n$, with each graph $G_i$ having two pendant vertices $u_i$ and $v_i$, is obtained by adding an edge between $v_i$ and $u_j$ for all $1 \leq i \leq n$ and $j \equiv i + 1 \pmod{n}$ in the disjoint union of $G_1, \ldots, G_n$.

\begin{lemma}[{\cite[Theorem~5.3]{CG19}}]\label{lem:bar-ring-genus}
  The genus polynomial of a bar-ring $G^\circ$ of some graphs $G_1, \ldots, G_n$ is given by
\[
  \Gamma_{G^\circ_n} = x \prod_{i=1}^n \Gamma_{G_i}(x) + (1 - x) \prod_{i=1}^n S_{G_i}(x).
\]
\end{lemma}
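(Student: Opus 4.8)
The plan is to compute the genus polynomial of the bar-ring $G^\circ_n$ by classifying its cellular embeddings according to how the ``ring'' of connecting edges behaves topologically. Recall that $G^\circ_n$ is formed from $G_1, \ldots, G_n$ by adding $n$ connecting edges $e_i = \{v_i, u_{i+1}\}$ (indices mod $n$) so that these edges, together with the paths through each $G_i$ from $u_i$ to $v_i$, form a cyclic structure. Each pendant vertex has degree $1$ inside its $G_i$ and degree $2$ after the connecting edges are added. The key observation driving \Cref{lem:bar-ring-genus} is that the partial distributions $D_{G_i}$ and $S_{G_i}$ record exactly the local information at the two pendant vertices $u_i, v_i$ that we need: whether the faces incident to $u_i$ and $v_i$ are distinct ($D$) or share a common face ($S$). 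Since $\Gamma_{G_i} = D_{G_i} + S_{G_i}$, knowing $\Gamma_{G_i}$ and $S_{G_i}$ suffices.

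First I would set up the face-tracing bookkeeping. When we add a connecting edge $e_i$ between the pendant vertices $v_i$ and $u_{i+1}$, the rotation at each of these degree-one-becoming-degree-two vertices is forced (only one cyclic order on two darts), so adding each connecting edge does not multiply the count by a rotation factor; this is why no degree factors $d_u d_v$ appear, in contrast to the bar-amalgamation of \Cref{thm:bar}. I would trace how the faces merge along the ring. Going around the cycle formed by the $G_i$'s and the connecting edges, each block $G_i$ contributes either a ``distinct'' local pattern (its two pendant faces are separate) or a ``shared'' pattern (one face touches both $u_i$ and $v_i$). The crucial dichotomy is global: the ring of connecting edges either bounds on both sides, splitting off one extra face (contributing a factor related to $x$ through the Euler characteristic), or it does not, depending on the parity/pattern of how the blocks are glued.

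The main technical step, which I expect to be the principal obstacle, is to prove the exact combinatorial identity
\[
  f(G^\circ_n) = \text{(face count determined by the } D/S \text{ pattern of the blocks)},
\]
i.e.\ to show that the genus contribution organizes precisely into the two terms $x \prod_i \Gamma_{G_i}(x)$ and $(1-x)\prod_i S_{G_i}(x)$. The cleanest way is an inductive argument on $n$, or equivalently a transfer-matrix/amalgamation argument: I would first handle the open ``bar-chain'' (a path-like amalgamation of the $G_i$ via connecting edges without closing the cycle), tracking a $2\times 2$ state recording whether the two currently-exposed pendant faces are the same face or two different faces. Each block $G_i$ acts on this state via a transfer operator whose entries are built from $D_{G_i}$ and $S_{G_i}$, and closing the ring corresponds to taking an appropriate trace. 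The term $\prod_i S_{G_i}(x)$ arises from the configuration in which every block shares a face and the whole ring collapses the two ring-faces into one (forcing the genus-shifting factor), while the complementary configurations reassemble into $\prod_i \Gamma_{G_i}(x)$; the algebra of $x$ versus $1-x$ comes from counting the net change in the number of faces, hence in the genus via Euler's formula $f = 2 - 2\gamma - v + e$.

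Finally I would verify the exponent bookkeeping: every merging of two distinct faces into one decreases $f$ by $1$ and hence increases the genus by $1$ (increasing the exponent of $x$ by $1$), and every splitting does the reverse. I would check the boundary/degenerate cases (for instance small $n$, or a block $G_i$ whose $D_{G_i}$ vanishes) to confirm the formula, and confirm that the two terms do not double-count by noting that they correspond to disjoint classes of ring behavior. Matching the total $\sum_j \gamma_j(G^\circ_n) x^j$ against $x \prod_i \Gamma_{G_i}(x) + (1-x)\prod_i S_{G_i}(x)$ then completes the identification.
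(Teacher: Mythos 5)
The paper itself offers no proof to compare against: \Cref{lem:bar-ring-genus} is imported verbatim from \cite[Theorem~5.3]{CG19}. Judged on its own terms, your plan starts correctly: embeddings of $G^\circ$ are in bijection with tuples of embeddings of the $G_i$'s because the rotation at a degree-$2$ vertex is forced (which is indeed why no degree factors appear, in contrast to \Cref{thm:bar}), and the lemma should follow by determining, for each tuple, how the genus of the assembled embedding compares to $\sum_i \gamma_i$. However, the key dichotomy --- precisely the step you defer as ``the principal obstacle'' --- is stated backwards, and with your assignment the computation produces the wrong polynomial. The correct statement is: if \emph{every} block is of type $S$, the boundary walk is transmitted through each block from one pendant vertex to the other, so the two ends of the opened chain lie on a \emph{common} face; the closing edge then splits that face in two ($e$ and $f$ both increase by one), the genus is unchanged, and the ring carries two faces. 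If \emph{at least one} block is of type $D$, the walk is reflected at that block, the two ends of the chain lie on \emph{distinct} faces, the closing edge merges them ($e$ increases, $f$ decreases), and the genus increases by exactly one. Hence
\[
\Gamma_{G^\circ}(x) = \prod_{i=1}^n S_{G_i}(x) + x\left(\prod_{i=1}^n \Gamma_{G_i}(x) - \prod_{i=1}^n S_{G_i}(x)\right) = x \prod_{i=1}^n \Gamma_{G_i}(x) + (1-x)\prod_{i=1}^n S_{G_i}(x).
\]
You assert the opposite: that the all-$S$ configuration ``collapses the two ring-faces into one (forcing the genus-shifting factor)'' while the complementary configurations are unshifted. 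That assignment gives $x\prod_i S_{G_i}(x) + \bigl(\prod_i \Gamma_{G_i}(x) - \prod_i S_{G_i}(x)\bigr) = \prod_i \Gamma_{G_i}(x) - (1-x)\prod_i S_{G_i}(x)$, which is not the lemma, and which already fails for the bar-ring of a single path ($\Gamma_{G_1} = S_{G_1} = 1$, $D_{G_1} = 0$): the bar-ring is a cycle, hence planar with genus polynomial $1$, whereas this version gives $x$.

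Two further inaccuracies would need repair even after fixing the inversion. First, the dichotomy has nothing to do with ``parity'' of the gluing pattern: these are orientable embeddings, no twisting is possible, and the condition is exactly ``all blocks of type $S$'' versus ``at least one block of type $D$''. Second, your closing claim that ``every splitting does the reverse'' (i.e.\ lowers the genus) is false: an edge addition that splits a face leaves the genus unchanged, and no step of this construction ever decreases the genus. Your transfer-matrix framing of the open chain is viable but degenerate --- the chain is in the $S$ state if and only if every block is, so $S_{\mathrm{chain}} = \prod_i S_{G_i}$ and $D_{\mathrm{chain}} = \prod_i \Gamma_{G_i} - \prod_i S_{G_i}$ --- and once the reflection/transmission dichotomy is actually proved by the face-tracing you sketch, the lemma follows in one line from Euler's formula applied to the closing edge.
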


\begin{lemma} \label{lem:bar-ring-euler-genus}
  The Euler-genus polynomial of the bar-ring $G^\circ$ of some graphs $G_1, \ldots, G_n$ is given by
  \[
    E_{G^\circ}(x) = 2x^2 \prod_{i=1}^n E_{G_i}(x) + (1+x-2x^2) \prod_{i=1}^{n} \tilde{S}_{G_i}(x).
  \]
\end{lemma}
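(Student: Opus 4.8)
The plan is to follow the structure of the proof of \Cref{lem:bar-ring-genus}, splitting the construction of $G^\circ$ into two stages whose effect on the partial Euler-genus polynomials can be tracked separately. In the first stage I build the ``bar-chain'' $(P, u_1, v_n)$, the rooted graph with pendant vertices $u_1, v_n$ obtained from $G_1, \dots, G_n$ by the bar-amalgamations along the edges $v_i u_{i+1}$ for $1 \le i \le n-1$. In the second stage I close the ring by adding the single edge $e = v_n u_1$ between the two pendant vertices of $P$, so that $G^\circ = P + e$. The whole point of this decomposition is that the first stage only creates cut edges, so Euler-genus polynomials multiply, whereas the second stage creates a cycle and is where the nontrivial coefficients $2x^2$ and $1+x-2x^2$ come from.

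For the first stage I would track, through each bar-amalgamation, both the Euler-genus polynomial and the shared-face partial polynomial $\tilde{S}$. Since every joining edge has two pendant endpoints, \Cref{thm:bar} applies with $d_u = d_v = 1$ and gives at once $E_P(x) = \prod_{i=1}^n E_{G_i}(x)$. For $\tilde{S}$ I would prove a multiplicativity rule: if $(K, a, b)$ is the bar-amalgamation of $(A, a, c)$ and $(B, d, b)$ along the edge $cd$, then $\tilde{S}_K = \tilde{S}_A\,\tilde{S}_B$. The point is that $cd$ is a cut edge, so in any embedding the face meeting $c$ and the face meeting $d$ merge into a single face $F^\ast$ while the Euler-genus adds (additivity of Euler-genus under the connected sum realized by a bridge); hence the faces at $a$ and $b$ coincide precisely when each already coincided with $F^\ast$, i.e.\ exactly when both pieces were of shared type. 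A short count (twisting a cut edge yields no new embedding, consistent with the absence of a factor $2$ in \Cref{thm:bar}) confirms that each pair of embeddings gives exactly one embedding of $K$, so the polynomials multiply. Iterating gives $\tilde{S}_P(x) = \prod_{i=1}^n \tilde{S}_{G_i}(x)$.

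The second stage is the heart of the argument. First I would establish that each embedding of $P$ extends to exactly two embeddings of $G^\circ = P + e$, according to whether $e$ is twisted or not: adding $e$ leaves $\prod_w (d_w - 1)!$ unchanged (the pendants become degree-$2$ vertices) but raises the Betti number by one, so by the relation $\Gamma_G(1)/E_G(1) = 2^{-\beta(G)}$ from \Cref{sec:prelim} we get $E_{G^\circ}(1) = 2\,E_P(1)$. Then I would compute the Euler-genus change through the change $\Delta f$ in the number of faces, using $\eulergenus_{\mathrm{new}} = \eulergenus_{\mathrm{old}} + 1 - \Delta f$ from Euler's formula. Writing $F_{u_1}, F_{v_n}$ for the faces of the embedding of $P$ meeting the two pendants, I would treat four cases: if $F_{u_1} = F_{v_n}$ (the shared case counted by $\tilde{S}_P$), an untwisted $e$ splits this face ($\Delta f = +1$, Euler-genus unchanged) while a twisted $e$ keeps it one face ($\Delta f = 0$, Euler-genus $+1$); if $F_{u_1} \neq F_{v_n}$ (the distinct case counted by $\tilde{D}_P = E_P - \tilde{S}_P$), joining the two boundary circles by a band merges them into one regardless of twist ($\Delta f = -1$, Euler-genus $+2$). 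Summing the four contributions gives
\[
  E_{G^\circ}(x) = (1+x)\,\tilde{S}_P(x) + 2x^2\,\tilde{D}_P(x),
\]
and substituting $\tilde{D}_P = E_P - \tilde{S}_P$ together with the products from the first stage yields
\[
  E_{G^\circ}(x) = 2x^2 \prod_{i=1}^n E_{G_i}(x) + (1+x-2x^2) \prod_{i=1}^n \tilde{S}_{G_i}(x),
\]
as claimed.

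The main obstacle is the careful justification of the face-count changes $\Delta f$ for inserting an untwisted versus a twisted edge between two degree-one vertices within the signed-rotation-system (equivalently, band-decomposition) formalism, together with the bookkeeping distinguishing a cut edge (where the twist is immaterial) from the ring-closing edge (where the two twist choices give genuinely distinct embeddings). I expect the cleanest route is to argue at the level of boundary walks of ribbon graphs: attaching a band to two arcs of the same face boundary either splits that boundary or preserves it as a single curve according to the twist, whereas attaching a band joining two distinct boundary curves always produces a single curve; the counting identity $E_{G^\circ}(1) = 2\,E_P(1)$ then certifies that no embedding is missed or double-counted.
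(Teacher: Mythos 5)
Your proof is correct, but it takes a genuinely different route from the paper's: in the paper, \Cref{lem:bar-ring-euler-genus} is dispatched in a single sentence by citing Theorems~5.3 and~6.4 of \cite{CG19} (the first of which is exactly \Cref{lem:bar-ring-genus}), so no argument appears in the text at all, whereas you give a self-contained derivation. Your two-stage decomposition is sound: $E_P = \prod_i E_{G_i}$ is \Cref{thm:bar} with $d_u = d_v = 1$; the multiplicativity $\tilde{S}_K = \tilde{S}_A \tilde{S}_B$ holds because a bridge merges the two end-faces into a single face $F^\ast$ while Euler-genus adds, and the counting identity $E_K(1) = E_A(1)E_B(1)$ (additivity of the Betti number over a bridge) upgrades the correspondence to a bijection; and the final case analysis via $\eulergenus_{\mathrm{new}} = \eulergenus_{\mathrm{old}} + 1 - \Delta f$ gives a contribution of $1+x$ per shared-face embedding of $P$ and $2x^2$ per distinct-face embedding, whence $E_{G^\circ} = (1+x)\tilde{S}_P + 2x^2\bigl(E_P - \tilde{S}_P\bigr)$, which rearranges to the stated formula. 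What your route buys is independence from \cite{CG19}: in effect you re-derive the edge-adding rules that the paper only records in its appendix (rules R1--R3) and uses there for the $C_n^2$ computation. What the paper's route buys is brevity, and consistency with the same source it already invokes for \Cref{lem:bar-ring-genus}.

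Two small points of imprecision, neither of which is a gap. First, in the shared-face case your parenthetical assignment (untwisted $e$ splits the face, twisted $e$ does not) is not literally true in general: which of the two insertions splits the face depends on how the face traversal meets the two pendant corners (the appendix's distinction between type $0$ and type $1$ faces, under which rules R2 and R3 swap roles). Since you sum over both insertions, the aggregate contribution $1+x$ per shared embedding is correct regardless, so your conclusion stands; only that one sentence should be weakened to ``exactly one of the two insertions splits the face.'' Second, your computation of Euler-genus $+2$ when the two pendants lie on distinct faces is correct and is consistent with Table~5 of the paper, even though the appendix's rule R1 as printed asserts $+1$; that is a typo in the paper (Euler's formula with $\Delta f = -1$ forces $+2$), not an error on your side.
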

\begin{proof}
The result follows directly from Theorem~5.3 and Theorem~6.4 in \cite{CG19}.
\end{proof}

We observe that \Cref{lem:bar-ring-genus,lem:bar-ring-euler-genus} have similar form on the right-hand side as sum of two products, with the terms in the first product ``containing'' those in the second. Therefore, we are tempted to show that the first term is dominant, while the second is perturbative. Then to show that the first term indeed leads to a convergence to the normal distribution, we can either perform explicit computation or use the central limit theorem as in \Cref{sec:tree-like}.

In order to deal with perturbation at the level of probability generating function, which are normalized genus and Euler-genus polynomials in our case, we propose the following lemma. Given a real-valued polynomial $P(x)$, we define $\Sigma(P)$ to be the sum of the absolute values of all its coefficients. It is clear that $\Sigma(P) \geq |P(1)|$.

\begin{lemma} \label{lem:gen-fct-perturbe}
  Let $(P_n(x))_{n \geq 1}$ be a sequence of polynomials with positive coefficients such that $P_n(1) \to \infty$ when $n \to \infty$, and $(Q_n(x))_{n \geq 1}$ a sequence of polynomials with real coefficients such that $P_n(x) + Q_n(x)$ has all its coefficients positive for all $n$ and $Q_n(1) \geq 0$. Let $(X_n)_{n \geq 1}$ (resp. $(X'_n)_{n \geq 1}$) be the sequence of independent random variables where $X_i$ with values in $\naturals$ is defined by its probability generating function $P_n(x) / P_n(1)$ (resp. $(P_n(x) + Q_n(x)) / (P_n(1) + Q_n(1))$). Suppose that $(X_n)_{n \geq 1}$ is asymptotically normal, and $\Sigma(Q_n) = o(P_n(1))$. Then $(X'_n)_{n \geq 1}$ is also asymptotically normal.
\end{lemma}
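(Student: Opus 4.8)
The plan is to prove that the laws of $X_n$ and $X'_n$ are close in total variation and to transfer the asymptotic normality of $(X_n)$ to $(X'_n)$ through this closeness. The crucial idea that makes this work is that I will \emph{not} recenter and rescale $X'_n$ by its own mean and variance (which are not controlled by $\Sigma(Q_n)$), but rather by those of $X_n$, exploiting that asymptotic normality only requires the existence of \emph{some} admissible centering and scaling sequences.

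First I would set up the mass functions. Writing $a_{n,k} = [x^k]P_n(x) \ge 0$, $b_{n,k} = [x^k]Q_n(x)$, $A_n = P_n(1) > 0$ and $B_n = Q_n(1) \ge 0$, the variable $X_n$ has mass $p_{n,k} = a_{n,k}/A_n$ and $X'_n$ has mass $p'_{n,k} = (a_{n,k}+b_{n,k})/(A_n+B_n)$; both are genuine distributions on $\naturals$ by the positivity hypotheses on $P_n$ and $P_n + Q_n$. A direct computation gives $p_{n,k} - p'_{n,k} = (a_{n,k}B_n - b_{n,k}A_n)/\bigl(A_n(A_n+B_n)\bigr)$, whence, using $\sum_k a_{n,k} = A_n$, $\sum_k |b_{n,k}| = \Sigma(Q_n)$, $0 \le B_n \le \Sigma(Q_n)$ and $A_n + B_n \ge A_n = P_n(1)$,
\[
  \sum_k |p_{n,k} - p'_{n,k}| \;\le\; \frac{B_n + \Sigma(Q_n)}{A_n + B_n} \;\le\; \frac{2\,\Sigma(Q_n)}{P_n(1)}.
\]
Since $\Sigma(Q_n) = o(P_n(1))$, the total variation distance $d_{\mathrm{TV}}(X_n, X'_n) = \tfrac12 \sum_k |p_{n,k} - p'_{n,k}|$ tends to $0$.

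Then I would transfer the convergence. Let $\mu_n = \expect X_n$ and $\sigma_n = (\var X_n)^{1/2} > 0$. The map $t \mapsto (t - \mu_n)/\sigma_n$ is a Borel bijection of $\reals$, so total variation is preserved and $d_{\mathrm{TV}}\bigl((X_n - \mu_n)/\sigma_n,\, (X'_n - \mu_n)/\sigma_n\bigr) \to 0$. Because $(X_n - \mu_n)/\sigma_n \convd \normaldist(0,1)$ has a continuous limiting distribution function, for every $x \in \reals$ we get $|\prob[(X'_n - \mu_n)/\sigma_n \le x] - \prob[(X_n - \mu_n)/\sigma_n \le x]| \le d_{\mathrm{TV}} \to 0$, while the second probability converges to the standard normal distribution function at $x$. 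Hence $(X'_n - \mu_n)/\sigma_n \convd \normaldist(0,1)$. Taking $e_n = \mu_n$ and $v_n = \sigma_n^2 \in (0,\infty)$ in the equivalent formulation of asymptotic normality recorded in \Cref{sec:prelim}, I conclude that $(X'_n)$ is asymptotically normal.

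The main obstacle is precisely that $\Sigma(Q_n)$ bounds only the $\ell^1$-mass of the perturbation $Q_n$ and says nothing about its degree, so $Q_n'(1)$ and $Q_n''(1)$ — and thus the mean and variance of $X'_n$ — need not be close to those of $X_n$. Any attempt to normalize $X'_n$ by its own parameters would therefore stall; the argument goes through only because the total variation bound lets us keep the centering and scaling of $X_n$ and invoke the flexibility in the definition of asymptotic normality.
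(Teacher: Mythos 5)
Your proof is correct and follows essentially the same route as the paper: the identical total-variation estimate $\bigl(\Sigma(Q_n)+Q_n(1)\bigr)/\bigl(P_n(1)+Q_n(1)\bigr)=o(1)$, followed by transferring convergence in distribution from $X_n$ to $X'_n$. The only difference is that you make explicit the transfer step---in particular that one must keep the centering and scaling of $X_n$ rather than those of $X'_n$, invoking the ``some $e_n$, $v_n$'' formulation of asymptotic normality---whereas the paper compresses this into the single remark that total variation convergence is stronger than convergence in distribution.
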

\begin{proof}
  We first show that the total variation distance between $X_n$ and $X'_n$ tends to $0$ when $n \to \infty$. We have
  \begin{align*}
    \frac{1}{2} \Vert X_n - X'_n \Vert_{TV}
    &\leq \sum_{k \geq 0} \left| \frac{[x^k]P_n(x) + [x^k]Q_n(x)}{P_n(1) + Q_n(1)} - \frac{[x^k]P_n(x)}{P_n(1)} \right| \\
    &\leq \sum_{k \geq 0} \left| \frac{[x^k]Q_n(x)}{P_n(1) + Q_n(1)} \right| + \sum_{k \geq 0} \left| \frac{Q_n(1) [x^k]P_n(x)}{P_n(1) (P_n(1) + Q_n(1))} \right| \\
    &= \frac{\Sigma(Q_n) + Q_n(1)}{P_n(1) + Q_n(1)} = o(1).
  \end{align*}
  The last equality is due to the fact that $\Sigma(Q_n(1)) = o(P_n(1))$ and $\Sigma(Q_n) \geq |Q_n(1)|$. We then conclude as total variation convergence is stronger than convergence in distribution.
\end{proof}

\subsection{Ring-like graph families} \label{sec:ring-like}

In this subsection, we consider ring-like graphs, which are bar-rings with graphs of arbitrary sizes. We first consider ring-like graphs formed by a special family of sub-graphs called \emph{half-open ladders} as components, in which case we can perform explicit computation for bounds needed in \Cref{lem:gen-fct-perturbe}. We then also consider the more general case where we relax the constraints on the components of bar-rings.

Let $P_n$ be the path graph with $n \geq 1$ vertices. An \tdef{$n$-rung half-open ladder}, denoted as $\halfladder_n$, is the Cartesian product of $P_{n + 1}$ with the complete graph $K_2$ with the edge from $K_2$ at one end removed. For instance, $\halfladder_1$ is a path of length $3$ for the special case $n = 1$. The edges from $K_2$ are called \tdef{rungs}, and it is clear that $\halfladder_n$ always has two pendant vertices. For a tuple of non-negative integers $\alpha = (\alpha_1, \ldots, \alpha_k)$, the \tdef{star-ladder} with signature $\alpha$, denoted by $\starladder_\alpha$, is the bar-ring of $\halfladder_{\alpha_1}, \ldots, \halfladder_{\alpha_k}$.

The genus and Euler-genus polynomials of star-ladders with given signature are known \cite{CG19,CGM12}. In the following, we perform an explicit computation for the genus polynomial of star-ladders.

\begin{prop}
  \label{prop:half-ladder-partial-genus}
  For $n \geq 0$, The half-open ladder $\halfladder_n$ has the partial genus polynomials
  \[
    D_{\halfladder_n}(x) = \sum_{k=0}^{\lfloor n/2 \rfloor} \binom{n-k}{k} 2^{n+k+1} x^k, \quad S_{\halfladder_n}(x) = \sum_{k=0}^{\lfloor (n + 1) / 2\rfloor}
    \binom{n + 1 - k}{k} 2^k x^k.
  \]
  Furthermore, we have $3D_{\halfladder_n}(1) \geq S_{\halfladder_n}$
\end{prop}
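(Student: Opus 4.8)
The plan is to prove the two closed forms by induction on $n$ through ``production rules'' describing how the partial genus polynomials change when one rung is appended at the open end of the half-open ladder, and then to read off the final inequality from the values at $x=1$. The first thing I would record is the structural shape of the two formulas. Writing $f_m(y) = \sum_{k \ge 0} \binom{m-k}{k} y^k$ for the Fibonacci polynomials, which satisfy $f_m(y) = f_{m-1}(y) + y f_{m-2}(y)$ with $f_0 = f_1 = 1$, the claimed identities are exactly
\[
  D_{\halfladder_n}(x) = 2^{n+1} f_n(2x), \qquad S_{\halfladder_n}(x) = f_{n+1}(2x).
\]
In this form they are equivalent to the single three-term recursion $S_{\halfladder_n}(x) = S_{\halfladder_{n-1}}(x) + 2x\, S_{\halfladder_{n-2}}(x)$ together with the relation $D_{\halfladder_n}(x) = 2^{n+1} S_{\halfladder_{n-1}}(x)$, with base cases $S_{\halfladder_0} = 1$, $S_{\halfladder_1} = 1 + 2x$, $D_{\halfladder_0} = 2$, $D_{\halfladder_1} = 4$. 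So it suffices to establish these two recursions and to check the base cases by hand; the closed forms then follow immediately since Pascal's rule $\binom{m-k}{k} = \binom{m-1-k}{k} + \binom{m-1-k}{k-1}$ is precisely the Fibonacci-polynomial recursion above.

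The heart of the argument, and the step I expect to be the main obstacle, is the topological derivation of these production rules. Appending a rung at the open end introduces a bounded number of new rotational choices, and one must track how each choice affects both the genus, via whether a face is split or two faces are merged, and the face-adjacency type of the two pendant vertices $u,v$ that controls the split between $D$ and $S$. I would organize this as a transfer-matrix computation on a small vector of partial genus polynomials indexed by the face-adjacency pattern at the open end, and then verify that the pattern collapses to the two scalar recursions above. The weighted-tiling shape of $f_m(2x)$ is exactly what such an analysis should produce: the ``$+\,S_{\halfladder_{n-1}}$'' term should correspond to an appended rung that leaves the shared-face structure unchanged, while the ``$+\,2x\,S_{\halfladder_{n-2}}$'' term should correspond to the two rotations spanning two consecutive rungs that merge the two pendant faces and raise the genus by one. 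The relation $D_{\halfladder_n} = 2^{n+1} S_{\halfladder_{n-1}}$ should fall out of the same bookkeeping, the factor $2^{n+1}$ collecting the rotational choices that become free once the two pendant faces are forced to be distinct. An alternative to the transfer-matrix route would be to set up a direct bijection between the rotation systems and weighted Fibonacci tilings of a strip, with monominoes and dominoes carrying the weights $2$ and $2x$; this is cleaner to state but shifts the difficulty into proving the bijection.

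Finally, for the inequality $3 D_{\halfladder_n}(1) \ge S_{\halfladder_n}(1)$ I would evaluate at $x=1$. Setting $A_m = f_m(2) = \sum_k \binom{m-k}{k} 2^k$, a Jacobsthal-type sequence with $A_m = A_{m-1} + 2A_{m-2}$ and $A_0 = A_1 = 1$, the closed forms give $D_{\halfladder_n}(1) = 2^{n+1} A_n$ and $S_{\halfladder_n}(1) = A_{n+1}$. Since $(A_m)$ is non-decreasing, $A_{n+1} = A_n + 2A_{n-1} \le 3 A_n$ for $n \ge 1$, whence
\[
  S_{\halfladder_n}(1) = A_{n+1} \le 3 A_n \le 3 \cdot 2^{n+1} A_n = 3 D_{\halfladder_n}(1),
\]
with the cases $n = 0, 1$ checked directly. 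This last part is routine once the closed forms are available, so essentially all the difficulty is concentrated in the topological production-rule step.
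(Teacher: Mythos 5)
The only mathematically substantive content of this proposition is the determination of the partial genus polynomials themselves, and that is exactly the step you do not carry out. Your repackaging of the closed forms as $D_{\halfladder_n}(x)=2^{n+1}f_n(2x)$, $S_{\halfladder_n}(x)=f_{n+1}(2x)$, the reduction to the recursion $S_{\halfladder_n}=S_{\halfladder_{n-1}}+2x\,S_{\halfladder_{n-2}}$ together with $D_{\halfladder_n}=2^{n+1}S_{\halfladder_{n-1}}$, and the inequality at $x=1$ via Jacobsthal numbers are all correct but routine algebra; the ``production rules'' that would justify those recursions are only described in the conditional (``should correspond to'', ``should fall out of'', ``I expect''), never established. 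In the paper this is where all the work happens: it introduces the pair $(D_n,S_n)$ of partial genus polynomials of the ladder $L_n$ relative to an end-rung, invokes the face-tracing recurrences of Furst--Gross--Statman, $D_n(x)=2D_{n-1}(x)+4S_{n-1}(x)$ and $S_n(x)=2xD_{n-1}(x)$, solves them through the rational generating function $F_D(x,t)=(1-2t-8xt^2)^{-1}$, and only then passes to the half-open ladder by the face-tracing relations $D_{\halfladder_n}(x)=2D_n(x)$ and $S_{\halfladder_n}(x)=4S_n(x)+2D_n(x)$. So what you flag as ``the main obstacle'' is in effect the entire proof, and your text is a plan for it rather than a proof.

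Worse, the plan as calibrated cannot be completed, because you took the misprinted formula for $S_{\halfladder_n}$ at face value. The paper's own computation yields $S_{\halfladder_n}(x)=\sum_{k}\binom{n+1-k}{k}2^{n+k+1}x^k=2^{n+1}f_{n+1}(2x)$; the displayed $2^k$ drops a factor $2^{n+1}$, and with the correct normalization the structural relation is $D_{\halfladder_n}=2\,S_{\halfladder_{n-1}}$, not $D_{\halfladder_n}=2^{n+1}S_{\halfladder_{n-1}}$. The asymmetric pair you aim at (the factor $2^{n+1}$ present in $D$ but absent from $S$) is unreachable by any rung-appending argument of the kind you outline: passing from $\halfladder_{n-1}$ to $\halfladder_n$ turns the two pendant vertices into degree-$3$ vertices and attaches two fresh pendants, so it multiplies the total number of embeddings, hence $D(1)+S(1)$, by exactly $2!\cdot 2!=4$; but the literal formulas give $D(1)+S(1)=7,\,29,\,91$ for $n=1,2,3$, which is not geometric with ratio $4$. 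For the same reason a relation $D_{\halfladder_n}=2^{n+1}S_{\halfladder_{n-1}}$, whose coefficient grows with $n$, cannot come out of a fixed local transfer matrix. Finally, the base-case verification you defer (``checked by hand'') would not have gone through: under the paper's definition, $\halfladder_1$ is a path on four vertices with a single embedding, so a literal hand check gives $D_{\halfladder_1}=0$ and $S_{\halfladder_1}=1$, matching neither your base cases $D_{\halfladder_1}=4$, $S_{\halfladder_1}=1+2x$ nor the displayed formulas (the paper's proof in fact computes with a closed-end-ladder normalization, which is where the factor $2^{n+1}$ and the positive-genus terms originate). Performing that check first would have exposed the inconsistency before any topology was attempted.
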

\begin{proof}
  An \tdef{$n$-rung ladder}, denoted as $L_n$, is the Cartesian product of $P_n$ with the complete graph $K_2$. We notice that $\halfladder_{n}$ is obtained by removing one of the end-rungs of $L_{n+1}$. Let $e$ be an end-rung of $L_n$, we define the following two partial genus distributions of $L_n$:
  \begin{description}
  \item[$d_n(i)$]: the number of embeddings of $L_n$ on $O_i$ such that $e$ is adjacent to two different faces;
  \item[$s_n(i)$]: the number of embeddings of $L_n$ on $O_i$ such that $e$ is adjacent to only one face.
  \end{description}

  Let $D_n(x) = \sum_{i \geq 0} d_n(i) x^i$ and $S_n(x) = \sum_{i \geq 0} s_n(i) x^i.$ We now find closed formulas for $D_n(x)$ and $S_n(x)$. Using the techniques of counting genus distributions of the cobblestone path graphs \cite{FGS89}, we have the following recursive formulas for the partial genus distributions of the ladder graph $L_n$:
  \[
    d_n(i) = 2 d_{n-1}(i) + 4 s_{n-1}(i), \quad s_{n}(i) = 2 d_{n-1}(i-1).
  \]
  Rewriting the recurrence relations above in terms of generating functions, we obtain
  \[
    D_n(x) = 2 D_{n-1}(x) + 4 S_{n-1}(x), \quad S_{n}(x) = 2 x D_{n-1}(x).
  \]
  A simple substitution leads to
  \[
    D_n(x) = 2 D_{n-1}(x) + 8 x D_{n-2}(x),
  \]
  with initial conditions $D_0(x) = 1$ and $D_1(x) = 2$. Let $F_D(x,t) = \sum_{t \geq 0} t^n D_n(x)$, we have
  \[
    F_D(x, t) - 1 - 2t = 2t(F_D(x, t) - 1) + 8 t^2 x F_D(x, t).
  \]
  Solving the equation above, we have $F_D(x, t) = (1 - 2t - 8t^2x)^{-1}$. By extracting first the coefficient of $x^k$ then that of $t^n$ from $F_D(x, t)$, we get an explicit formula of $D_n(x)$:
  \[
    D_n(x) = \sum_{k=0}^{\lfloor n/2 \rfloor} \binom{n-k}{k} 2^{n+k} x^k.
  \]
  As $S_n(x) = 2x D_{n-1}(x)$, we also have
  \[
    S_n(x) = \sum_{k=0}^{\lfloor (n-1)/2\rfloor} \binom{n-k-1}{k} 2^{n+k} x^{k+1}.
  \]

  \begin{figure}
    \centering
    \includegraphics[width=1.5in]{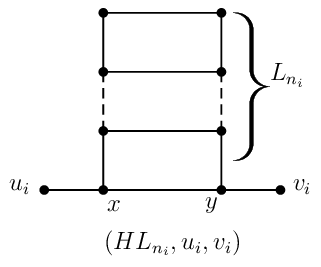}
    \caption{The graph $(\halfladder_{n_i},u_i,v_i)$}\label{fig:Ladder-pend}
  \end{figure}

  We observe that the $n$-rung half-open ladder $\halfladder_n$ can be obtained by connect two isolated vertices $u$ and $v$ to the two degree 2 vertices $x$ and $y$ of an end-rung of $L_{n}$ respectively (see \Cref{fig:Ladder-pend}). By face-tracing \cite{GT87}, the two partial genus polynomials for the structure $(\halfladder_{n}, u_i, v_i)$ are expressed $D_{\halfladder_n}(x) = 2D_{n}(x)$ and $S_{\halfladder_n}(x) = 4S_{n}(x) + 2D_{n}(x)$. A computation gives our first claim. Our second claim comes naturally from the fact that $D_n(1) \geq S_n(1)$.
\end{proof}

\begin{prop} \label{prop:half-ladder-partial-euler-genus}
  For the partial Euler-genus polynomials $\tilde{D}_{\halfladder_n}(x)$ and $\tilde{S}_{\halfladder_n}(x)$ of the half-open ladder $\halfladder_n$, we have $7 \tilde{D}_{\halfladder_n}(1) \geq \tilde{S}_{\halfladder_n}(1)$.
\end{prop}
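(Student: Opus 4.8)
The plan is to mirror the genus computation in \Cref{prop:half-ladder-partial-genus}, now working with signed rotation systems. Writing $\halfladder_n$ as the ladder $L_n$ with two pendant vertices $u,v$ attached to the degree-$2$ endpoints $x,y$ of an end-rung $e$ (as in that proof), I would first record the Euler-genus analogue of the face-tracing relations. Attaching a pendant at a degree-$2$ vertex amounts to inserting its dart into one of the two corners there, and the sign of a pendant (bridge) edge is immaterial; so, exactly as in the orientable case, each embedding of $L_n$ whose end-rung $e$ borders two distinct faces produces $2$ embeddings of $\halfladder_n$ with $u,v$ on distinct faces and $2$ with $u,v$ sharing a face, while each embedding with $e$ on a single face produces $4$ of the latter kind. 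Evaluating at $x=1$ this gives
\[
  \tilde{D}_{\halfladder_n}(1)=2\,\tilde{D}_{L_n}(1),\qquad
  \tilde{S}_{\halfladder_n}(1)=4\,\tilde{S}_{L_n}(1)+2\,\tilde{D}_{L_n}(1),
\]
where $\tilde{D}_{L_n},\tilde{S}_{L_n}$ are the partial Euler-genus polynomials of $L_n$ relative to $e$ (the analogues of $D_n,S_n$). Hence $\tilde{S}_{\halfladder_n}(1)/\tilde{D}_{\halfladder_n}(1)=2\,\tilde{S}_{L_n}(1)/\tilde{D}_{L_n}(1)+1$, and the claim $7\tilde{D}_{\halfladder_n}(1)\ge \tilde{S}_{\halfladder_n}(1)$ reduces to the single dominance inequality $\tilde{S}_{L_n}(1)\le 3\,\tilde{D}_{L_n}(1)$. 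This is the Euler-genus counterpart of $S_n(1)\le D_n(1)$ from the genus case, where the constant $1$ yields the factor $3$ and the constant $3$ here yields the factor $7$.

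Since $\tilde{D}_{L_n}(1)+\tilde{S}_{L_n}(1)=E_{L_n}(1)$ is the total number of general embeddings of $L_n$, this dominance is equivalent to $\tilde{D}_{L_n}(1)\ge \tfrac14 E_{L_n}(1)$, i.e.\ at least a quarter of the embeddings of $L_n$ have $e$ bordering two distinct faces. To prove this I would delete the terminal square of $L_n$ (the vertices $x,y$, the end-rung $e$, and the two incident rail edges) to obtain an embedding $M'$ of $L_{n-1}$, and count, for fixed $M'$, how the re-attachments of the terminal square distribute. For $n\ge 3$ the two attachment points of $L_{n-1}$ have degree $2$, so re-attachment has $2\times 2$ rotation choices together with $2$ choices for the twist of the newly created $4$-cycle; these give $E_{L_n}(1)/8$ fibres of size $8$, of which exactly $4$ are orientation-preserving on that cycle. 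For such an extension, routing the terminal path (through $x'$, $x$, $y$, $y'$) into the faces of $M'$ either sends its two ends into a common face of $M'$, splitting it and placing $e$ on two faces, or into two different faces, merging them and placing $e$ on one face. Whether the old end-rung of $M'$ borders one face or two, at least $2$ of the $4$ orientation-preserving extensions fall in the splitting case (two splits and two merges when the old rung has two faces, four splits when it has one). Thus each $M'$ contributes at least $2$ embeddings of $L_n$ with $e$ on two faces, whence $\tilde{D}_{L_n}(1)\ge 2\cdot E_{L_n}(1)/8=\tfrac14 E_{L_n}(1)$; the base case $n=2$ (where $L_1=K_2$ has degree-$1$ ends) and the degenerate $n=1$ are checked by hand.

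The main obstacle is the face split/merge step in the non-orientable setting: I must establish that, for an orientation-preserving re-attachment, sending both ends of the terminal path into a common face of $M'$ always splits that face while sending them into distinct faces always merges them, independently of whether $M'$ itself is orientable. Intuitively this is clear, because every face of a cellular embedding is a disk and a properly embedded arc in a disk separates it; but making it rigorous requires careful bookkeeping of corners and signed face-boundary walks, precisely the signed face-tracing underlying \Cref{lem:bar-ring-euler-genus}. Once this local statement is in hand, both the global count and the reduction to $\tilde{S}_{L_n}(1)\le 3\,\tilde{D}_{L_n}(1)$ are routine, and the factor $7$ follows.
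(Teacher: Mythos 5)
Your overall skeleton coincides with the paper's: the pendant-attachment relations $\tilde{D}_{\halfladder_n}(1)=2\tilde{D}_{L_n}(1)$, $\tilde{S}_{\halfladder_n}(1)=4\tilde{S}_{L_n}(1)+2\tilde{D}_{L_n}(1)$, and the reduction of the factor $7$ to the ladder dominance $\tilde{S}_{L_n}(1)\le 3\tilde{D}_{L_n}(1)$ are exactly the paper's first and last steps. Where you genuinely diverge is the proof of the dominance itself: the paper imports the recurrences for $(\tilde{D}_n,\tilde{S}_n)$ from \cite{CG18}, sets $x=1$, and solves them in closed form, namely $\tilde{D}_n(1)=\tfrac15\bigl(4\cdot 8^n+(-2)^n\bigr)$ and $\tilde{S}_n(1)=\tfrac15\bigl(6\cdot 8^n-(-2)^n\bigr)$, whereas you propose a purely combinatorial fibre count: each embedding of $L_{n-1}$ has exactly $8$ extensions to $L_n$, at least $2$ of which place the new end-rung on two distinct faces, giving $\tilde{D}_{L_n}(1)\ge\tfrac14 E_{L_n}(1)$. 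The fibre structure (size $8$, for $n\ge 3$) is correct, and the numerical bound of at least $2$ splits per fibre is in fact true; such an argument would be less computation-dependent than the paper's.

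However, the local lemma you hang the count on --- and which you yourself flag as the unproven main obstacle --- is \emph{false} as stated: an extension that is orientation-preserving on the new $4$-cycle and sends both path-ends into a common face need not split that face. Counterexample: let $M'$ be $C_4=L_2$ embedded in the projective plane (one face; cutting along the one-sided cycle with edges $a=x'y'$, $b$, $c$, $d$ yields an octagon with boundary word $abcdabcd$), and choose the corner of $x'$ and the corner of $y'$ that are \emph{not} the two ends of a single octagon side labelled $a$. The splitting extension (path drawn inside the octagon) creates a hexagonal face bounded by $x'x,\,xy,\,yy',\,b,\,c,\,d$; this cycle bounds a face, hence is two-sided, and since $abcd$ is one-sided, multiplicativity of the orientation character forces the new square $x'xyy'$ to be \emph{one}-sided in the splitting extension. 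So with these corners the orientation-preserving extension merges, and your parenthetical ``four splits when the old rung borders one face'' is also wrong (it is two here). The repair is to drop the orientation-preserving bookkeeping entirely: for each corner pair lying in a common face, exactly one of the two sign choices of the new rung splits it (a properly embedded arc in the face disk separates it; the other choice amounts to a crosscap, cf.\ rules \textbf{R1}--\textbf{R3} in the appendix), while corner pairs in distinct faces never split. As there are $2$ common-face corner pairs when the old rung borders two faces and $4$ when it borders one, this still yields at least $2$ splits per fibre of $8$, which is all you need. One further caveat: your ``degenerate $n=1$ checked by hand'' does not check out under honest counting --- $\halfladder_1$ is a path, so its unique embedding has its single face adjacent to both pendants, $\tilde{D}_{\halfladder_1}(1)=0$, and the claimed inequality fails at $n=1$; the paper sidesteps this only because the recurrences it imports from \cite{CG18} carry a different normalization at small indices.
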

\begin{proof}
  As in \Cref{prop:half-ladder-partial-genus}, we partition the embeddings of the $n$-rung ladder $L_n$ on a surface with Euler-genus $i$ into two \tdef{partial Euler-genus distributions} as follows:

  \begin{description}
  \item[$\tilde{d}_{n}(i)$]: the number of embeddings of $L_n$ {on the surface of Euler-genus $i$} such that the end-rung lies on the boundaries of two different faces.
  \item[$\tilde{s}_{n}(i)$]: the number of embeddings of $L_n$ on the surface of Euler-genus $i$ such that the end-rung lies twice on the boundary of the same face.
  \end{description}

  The two \tdef{partial Euler-genus polynomials} of $L_n$ are again denoted by
  $\tilde{D}_{n}(x) = \sum_{i \geq 0} \tilde{d}_{n}(i) x^i$ and $\tilde{S}_{n}(x) = \sum_{i \geq 0} \tilde{s}_{n}(i) x^i$. From \cite{CG18}, we have
  \begin{align*}
    \tilde{D}_{n}(x) &= 2 \tilde{D}_{n-1}(x) + 4 \tilde{S}_{n-1}(x), \\
    \tilde{S}_{n}(x) &= (2x + 4x^2) D_{n-1}(x) + 4 x S_{n-1}(x),
  \end{align*}
  with initial conditions $\tilde{D}_{0}(x) = 1$ and $\tilde{S}_{0}(x) = x$. The specialization $x = 1$ thus gives
  \[
    \tilde{D}_n(1) = 2 \tilde{D}_{n-1}(1) + 4 \tilde{S}_{n-1}(1), \quad \tilde{S}_{n}(1) = 6 \tilde{D}_{n-1}(x) + 4 \tilde{S}_{n-1}(1),
  \]
  along with the initial conditions $\tilde{D}_0(1) = \tilde{S}_0(1) = 1$. Solving the recurrence gives
  \[
    \tilde{D}_n(1) = \frac{1}{5} (4 \cdot 8^n + (-2)^n), \quad \tilde{S}_n(1) = \frac{1}{5} (6 \cdot 8^n - (-2)^n).
  \]
  Using the very crude bound $8^n \geq (-2)^n$ for any $n \geq 0$, we have $3 \tilde{D}_n(1) \geq \tilde{S}_n(1)$.

  Again, the two partial genus polynomials for the structure $(\halfladder_{n}, u_i, v_i)$ are given by $\tilde{D}_{\halfladder_n}(x) = 2\tilde{D}_{n}(x)$ and $\tilde{S}_{\halfladder_n}(x) = 4\tilde{S}_{n}(x) + 2\tilde{D}_{n}(x)$. Thus, we have $7 \tilde{D}_{\halfladder_n}(1) \geq \tilde{S}_{\halfladder_n}(x)$
\end{proof}
\begin{thm} \label{thm:star-ladder-limit}
  Let $\alpha = (\alpha_1, \alpha_2, \ldots)$ be an infinite sequence of strictly positive integers, and $\alpha|_k = (\alpha_1, \ldots, \alpha_k)$ the tuple of its first $k$ entries. Then both the genus and the Euler-genus distribution of the star-ladder $\starladder_{\alpha|_k}$ is asymptotically normal as $k$ tends to infinity.
\end{thm}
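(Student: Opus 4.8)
The plan is to read \Cref{lem:bar-ring-genus} and \Cref{lem:bar-ring-euler-genus} as furnishing exactly the ``major part plus perturbation'' decomposition required by \Cref{lem:gen-fct-perturbe}, and then to check its two hypotheses. I treat the genus case and only indicate the (parallel) Euler-genus changes. Writing $G^{\circ}_k = \starladder_{\alpha|_k}$, I set
\[
  P_k(x) = x\prod_{i=1}^k \Gamma_{\halfladder_{\alpha_i}}(x), \qquad Q_k(x) = (1-x)\prod_{i=1}^k S_{\halfladder_{\alpha_i}}(x),
\]
so that $P_k + Q_k = \Gamma_{G^{\circ}_k}$. Then $P_k$ has positive coefficients, $P_k+Q_k$ is a genus polynomial and hence also has positive coefficients, $P_k(1) = \prod_i \Gamma_{\halfladder_{\alpha_i}}(1) \to \infty$, and $Q_k(1)=0$ because of the factor $(1-x)$; thus the sign hypotheses of \Cref{lem:gen-fct-perturbe} hold, and it remains to verify (a) that the normalized major part $P_k/P_k(1)$ is asymptotically normal, and (b) that $\Sigma(Q_k) = o(P_k(1))$. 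For the Euler-genus case one takes $P_k = 2x^2\prod_i E_{\halfladder_{\alpha_i}}$ and $Q_k = (1+x-2x^2)\prod_i \tilde S_{\halfladder_{\alpha_i}}$, and here again $Q_k(1)=0$ since $1+x-2x^2$ vanishes at $x=1$.

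Claim (b) is the place where \Cref{prop:half-ladder-partial-genus,prop:half-ladder-partial-euler-genus} enter. Using sub-multiplicativity $\Sigma(fg)\le \Sigma(f)\Sigma(g)$ and the fact that a product of polynomials with nonnegative coefficients has $\Sigma$ equal to its value at $1$, I get $\Sigma(Q_k) \le \Sigma(1-x)\prod_i S_{\halfladder_{\alpha_i}}(1) = 2\prod_i S_{\halfladder_{\alpha_i}}(1)$. The inequality $3 D_{\halfladder_n}(1)\ge S_{\halfladder_n}(1)$ gives $S_{\halfladder_n}(1)/\Gamma_{\halfladder_n}(1)\le 3/4$, whence $\Sigma(Q_k)/P_k(1)\le 2\,(3/4)^k\to 0$. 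In the Euler-genus case $\Sigma(1+x-2x^2)=4$ and $\tilde S_{\halfladder_n}(1)/E_{\halfladder_n}(1)\le 7/8$ by \Cref{prop:half-ladder-partial-euler-genus}, so the ratio decays like $(7/8)^k$. In both cases the perturbation is negligible geometrically fast.

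The heart of the proof is claim (a). Since $P_k(x)/P_k(1) = x\prod_{i=1}^k\bigl(\Gamma_{\halfladder_{\alpha_i}}(x)/\Gamma_{\halfladder_{\alpha_i}}(1)\bigr)$, it is the probability generating function of $1+\sum_{i=1}^k Y_i$, where the $Y_i$ are independent with $Y_i$ distributed as the genus law $\gamma_{\halfladder_{\alpha_i}}$; the leading factor $x$ is merely a deterministic shift, exactly the bar-amalgamation picture of \Cref{coro:bar}. Asymptotic normality of $\sum_i Y_i$ would then follow from the central limit theorem of \Cref{prop:zhang-4}, as in \Cref{thm:pp-1,thm:pp-girth}: the maximal genus of $\halfladder_{\alpha_i}$ is linear in $\alpha_i$, and the per-component variance is bounded below by a positive constant whenever the component is not a cactus. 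For the Euler-genus version this lower bound is immediate from the edge-twisting argument in the proof of \Cref{thm:pp-girth}, since every $\halfladder_n$ has girth $4$ and maximum degree $3$; for the orientable genus it can be read off the explicit polynomials of \Cref{prop:half-ladder-partial-genus}.

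The main obstacle will be carrying out this central limit theorem when the signature $\alpha$ is unbounded. If $\alpha$ is bounded, \Cref{thm:pp-1} applies essentially verbatim once one knows that infinitely many $\halfladder_{\alpha_i}$ fail to be cacti. In general both the variance and the range of $Y_i$ grow with $\alpha_i$, so the bounded-variable criterion of \Cref{prop:zhang-4} only closes under a Lindeberg-type condition of the shape $\max_{i\le k}\alpha_i = o\bigl((\sum_{i\le k}\var Y_i)^{1/2}\bigr)$, which must be extracted from the growth of $\alpha$; I expect the cleanest route is to verify the Lindeberg condition directly from the uniform bound $|Y_i-\expect Y_i|\le \deg\Gamma_{\halfladder_{\alpha_i}}$. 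Finally, the degenerate sequence $\alpha=(1,1,\dots)$, for which every component is a path and $\starladder_{\alpha|_k}$ is a single cycle with constant genus law (and a fixed two-point Euler-genus law), shows that some such non-degeneracy-and-growth hypothesis on $\alpha$ is genuinely needed; pinning it down is the delicate point of the argument.
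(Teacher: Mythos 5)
Your decomposition into $P_k$ and $Q_k$, the verification of the sign hypotheses of \Cref{lem:gen-fct-perturbe}, and your perturbation estimate (b) --- $\Sigma(Q_k)\le 2\prod_{i\le k}S_{\halfladder_{\alpha_i}}(1)$ followed by the ratio bounds $3/4$ and $7/8$ from \Cref{prop:half-ladder-partial-genus,prop:half-ladder-partial-euler-genus} --- reproduce the paper's proof exactly. The divergence is at your claim (a): the paper's proof ends immediately after the perturbation estimate by invoking \Cref{lem:gen-fct-perturbe}, and it never verifies that lemma's remaining hypothesis, namely that the major part $P_k(x)/P_k(1)$, the probability generating function of $1+\sum_{i\le k}\gamma_{\halfladder_{\alpha_i}}$, is itself asymptotically normal. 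So the step you identified as the heart of the argument, and could not close, is precisely the step the paper omits.

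Your closing counterexample is correct and shows that this omission is not reparable: the theorem as stated is false. For $\alpha=(1,1,\dots)$ the star-ladder $\starladder_{\alpha|_k}$ is the cycle $C_{4k}$, so $\Gamma_{\starladder_{\alpha|_k}}(x)=1$ and $E_{\starladder_{\alpha|_k}}(x)=1+x$ for every $k$, and neither a point mass nor a fixed two-point law is asymptotically normal. Two corrections to your accounting of where things break, though. First, in this example your step (b) fails along with (a): the unique embedding of the path $\halfladder_1$ has a single face, so the true partial polynomials are $D_{\halfladder_1}=0$ and $S_{\halfladder_1}=\Gamma_{\halfladder_1}=1$, whence $S_{\halfladder_1}(1)/\Gamma_{\halfladder_1}(1)=1>3/4$ and $\Sigma(Q_k)/P_k(1)=\Sigma(1-x)/1=2\not\to 0$; your geometric decay $2(3/4)^k$ silently relied on the closed formulas of \Cref{prop:half-ladder-partial-genus}, which give $D_{\halfladder_1}(1)=4>0$ and are therefore wrong at $n=1$ (they are even inconsistent with \Cref{lem:bar-ring-genus}, which forces $\Gamma_{\halfladder_1}(1)=1$). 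Second, a counterexample isolating (a) alone is $\alpha=(2,2,\dots)$: one checks directly $D_{\halfladder_2}(1)=S_{\halfladder_2}(1)=2$, so your step (b) goes through with ratio $1/2$, but $\halfladder_2$ is a cactus (one $4$-cycle with two pendant edges) with constant genus law, and \Cref{lem:bar-ring-genus} gives $\Gamma_{\starladder_{\alpha|_k}}(x)=2^k+(4^k-2^k)x$, whose law converges to a point mass at genus $1$. Thus the genus half of the theorem fails even with all $\alpha_i\ge 2$.

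The hypothesis you call for is therefore genuinely needed and can be pinned down: infinitely many $\alpha_i\ge 3$ for the genus statement (infinitely many $\alpha_i\ge 2$ suffices for Euler-genus), which is exactly the condition that infinitely many component laws have variance bounded away from zero; note that the analogous non-degeneracy hypothesis does appear in \Cref{thm:general-bar-ring} and was dropped only in this star-ladder specialization. Even under that hypothesis, your claim (a) still requires an argument valid for unbounded $\alpha$, since \Cref{prop:zhang-4} breaks when a single $\alpha_i$ dominates the total variance: either your Lindeberg route with a separate treatment of dominant components (using that $\gamma_{\halfladder_n}$ is itself asymptotically normal as $n\to\infty$, by \cite{Ben73} applied to the rational generating function coming from the ladder recursion), or real-rootedness of the half-ladder genus polynomials, which would turn each component law into a sum of independent Bernoulli variables so that divergence of the variance alone suffices. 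None of this is supplied by the paper, whose proof of this theorem is incomplete on exactly the point your proposal refuses to paper over.
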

\begin{proof}
  Below, we prove the case for the genus distribution. The case of the Euler-genus distribution follows the same approach by replacing \cref{lem:bar-ring-genus} by \cref{lem:bar-ring-euler-genus}, and \cref{prop:half-ladder-partial-genus} by \Cref{prop:half-ladder-partial-euler-genus} with some changes in constants.

  The definition of star ladders and Lemma~\ref{lem:bar-ring-genus} implies that
  \[
    \Gamma_{\starladder_{\alpha|_k}} = x \prod_{i = 1}^k \Gamma_{\halfladder_{\alpha_i}}(x) + (1 - x) \prod_{i = 1}^k S_{\halfladder_{\alpha_i}}(x).
  \]
  As every $S_{\halfladder_n}(x)$ has only positive coefficients, we have
  \[
    \Sigma\left( (1 - x) \prod_{i=1}^k S_{\halfladder_{n_i}}(x) \right) \leq 2 \Sigma\left( \prod_{i=1}^k S_{\halfladder_{n_i}}(x) \right) = 2 \prod_{i=1}^n S_{\halfladder_{n_i}}(1).
  \]

  By \Cref{prop:half-ladder-partial-genus}, we have
  \[
    \frac{S_{\halfladder_n}(1)}{\Gamma_{\halfladder_n}(1)} = \frac{S_{\halfladder_n}(1)}{D_{\halfladder_n}(1) + S_{\halfladder_n}(1)} \leq \frac{3}{4},
  \]
  When $k \to \infty$, it is clear that $\prod_{i=1}^k S_{\halfladder_{\alpha_i}}(1) = o(\prod_{i=1}^k \Gamma_{\halfladder_{\alpha_i}}(1))$. By \Cref{lem:gen-fct-perturbe}, we have the asymptotic normality of the genus distribution of $(\starladder_{\alpha|_k})_{k \geq 2}$.
\end{proof}

\begin{thm} \label{thm:general-bar-ring}
  Let $\graphseq = (G_i)_{i \geq 1}$ be a sequence of graphs, and $G^\circ_n$ a bar-ring formed by $G_1, \ldots, G_n$ in this order. Suppose that
 \begin{itemize}
 \item Every $G_i$ is connected, and there are infinitely many $i$'s such that $\gamma_{\max}(G_i) > \gamma_{\min}(G_i)$;
 \item There is some constant $M$ such that every $G_i$ has at most $M$ edges.
 \end{itemize}
 Then the embedding distributions of $(G^\circ_n)_{n \geq 1}$ are asymptotically normal.
\end{thm}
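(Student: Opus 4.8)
The plan is to apply the perturbative framework of \Cref{lem:gen-fct-perturbe} to the splitting of $\Gamma_{G^\circ_n}$ furnished by \Cref{lem:bar-ring-genus}, with $P_n(x) = x\prod_{i=1}^n \Gamma_{G_i}(x)$ as the major term and $Q_n(x) = (1-x)\prod_{i=1}^n S_{G_i}(x)$ as the perturbation; the Euler-genus case will then follow identically from \Cref{lem:bar-ring-euler-genus}. The normalized major term $P_n/P_n(1)$ is the probability generating function of the independent sum $1 + \sum_{i=1}^n \gamma_{G_i}$. Since each $G_i$ has at most $M$ edges, its genus is at most $M/2$, so I would take $C_n = M/2$ in \Cref{prop:zhang-4}; and since there are only finitely many graphs with at most $M$ edges, the values $\var\gamma_{G_i}$ lie in a finite set, so each of the infinitely many indices with $\gamma_{\max}(G_i) > \gamma_{\min}(G_i)$ contributes a variance bounded below by a fixed $\nu > 0$. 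This forces $B_n^2 = \sum_{i=1}^n \var\gamma_{G_i} \to \infty$, hence $C_n = o(B_n)$, and \Cref{prop:zhang-4} gives asymptotic normality of the major term.

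The remaining task is to show the perturbation is negligible, and this is where the main obstacle lies. One would like $\gamma_{\max}(G_i) > \gamma_{\min}(G_i)$ to force $D_{G_i}(1) > 0$, which would yield a uniform bound $S_{G_i}(1)/\Gamma_{G_i}(1) \le c < 1$ exactly as in \Cref{thm:star-ladder-limit}; but this implication is false. If $G_i$ consists of a non-cactus subgraph (so that $\gamma_{\max}(G_i) > \gamma_{\min}(G_i)$) joined by a single bridge to a vertex $a$ of degree $3$ that carries the two pendant vertices, then a local face-tracing at $a$ shows that the faces at the two pendants always coincide, so $D_{G_i}(1) = 0$ and $S_{G_i}(x) = \Gamma_{G_i}(x)$. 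Such components keep the perturbation from vanishing and must be handled separately by factoring them out.

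To do this, I would split $\{1,\dots,n\}$ into $A_n = \{i \le n : D_{G_i}(1) > 0\}$ and its complement $B_n$, on which $S_{G_i} = \Gamma_{G_i}$; finiteness of graph types gives a constant $c < 1$ with $S_{G_i}(1)/\Gamma_{G_i}(1) \le c$ for all $i \in A_n$. If $|A_n| \to \infty$, then $\prod_{i=1}^n S_{G_i}(1)\big/\prod_{i=1}^n \Gamma_{G_i}(1) = \prod_{i \in A_n} S_{G_i}(1)/\Gamma_{G_i}(1) \le c^{|A_n|} \to 0$, so $\Sigma(Q_n) \le 2\prod_{i=1}^n S_{G_i}(1) = o(P_n(1))$ and \Cref{lem:gen-fct-perturbe} concludes the genus case. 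Otherwise $A_n$ stabilizes to a fixed finite set $A$; using $S_{G_i} = \Gamma_{G_i}$ on $B_n$ and \Cref{lem:bar-ring-genus} once more, the polynomial factors as $\Gamma_{G^\circ_n}(x) = \Gamma_{(G^\circ)^A}(x)\prod_{i \in B_n}\Gamma_{G_i}(x)$, where $(G^\circ)^A$ is the bar-ring of the finitely many components indexed by $A$. Thus $\gamma_{G^\circ_n}$ is an independent sum of $\sum_{i \in B_n}\gamma_{G_i}$ and the single variable $\gamma_{(G^\circ)^A}$; all summands are bounded (by $M/2$ and by the fixed genus of $(G^\circ)^A$) and infinitely many have positive variance, so \Cref{prop:zhang-4} applies directly.

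Finally, the Euler-genus statement follows by the same two-case analysis, with \Cref{lem:bar-ring-euler-genus} in place of \Cref{lem:bar-ring-genus} and $\tilde D_{G_i}, \tilde S_{G_i}$ in place of $D_{G_i}, S_{G_i}$; the constants coming from $(1+x-2x^2)$ change but the argument is unchanged. The positive-variance condition needed here is $\eulergenus_{\max}(G_i) > \eulergenus_{\min}(G_i)$, which holds for our infinitely many indices: by \Cref{rmk:genus-discrepany-struct}, $\gamma_{\max}(G_i) > \gamma_{\min}(G_i)$ means $G_i$ is not a cactus, hence not a tree, hence has Euler-genus discrepancy.
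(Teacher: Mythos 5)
Your proposal is correct, and it is in fact sharper than the paper's own proof, which takes exactly the route you anticipated and then rejected. The paper also splits $\Gamma_{G^\circ_n}$ via \Cref{lem:bar-ring-genus} and applies \Cref{lem:gen-fct-perturbe}, but to control the perturbation it claims a uniform bound $D_{G_i}(1)/\Gamma_{G_i}(1) \geq c > 0$ for every component with genus discrepancy: it passes to the $2$-core $\overline{G_i}$, takes a closed walk $C$ through the attachment points of the tree-components containing $u_i$ and $v_i$, fixes rotations so that a directed run of $C$ bounds a face, and then attaches ``the tree-component containing $u_i$ and $v_i$ to different sides of $C$'' to produce an embedding counted by $\mathbf{d}_{G_i}$. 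That last step tacitly assumes that $u_i$ and $v_i$ lie in \emph{different} tree-components. Your counterexample is precisely the case this misses: when both pendants hang off one vertex $a$ of degree $3$ joined by a bridge to a non-cactus core, the whole tree on $\{a,u_i,v_i\}$ sits inside a single face in every embedding (attaching a tree inside a face never splits that face), so the faces at $u_i$ and $v_i$ always coincide, giving $D_{G_i}(1)=0$ and $S_{G_i}=\Gamma_{G_i}$ even though $\gamma_{\max}(G_i)>\gamma_{\min}(G_i)$. So the implication the paper relies on is false, its stated lower bound fails for such $G_i$, and its proof has a genuine gap --- one that your argument repairs rather than inherits.

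Your repair is sound. Since $\mathbf{d}_{G_i}$ and $\mathbf{s}_{G_i}$ partition the embeddings of $(G_i,u_i,v_i)$, the condition $D_{G_i}(1)=0$ forces $S_{G_i}=\Gamma_{G_i}$ coefficientwise, so those factors cancel between the two products in \Cref{lem:bar-ring-genus}. When $|A_n|\to\infty$, finiteness of isomorphism types with at most $M$ edges gives $S_{G_i}(1)/\Gamma_{G_i}(1)\le c<1$ on $A_n$, hence $\Sigma(Q_n)\le 2c^{|A_n|}P_n(1)=o(P_n(1))$ and \Cref{lem:gen-fct-perturbe} applies; when $A_n$ stabilizes to a finite $A$, the factorization $\Gamma_{G^\circ_n}(x)=\Gamma_{(G^\circ)^A}(x)\prod_{i\in B_n}\Gamma_{G_i}(x)$ exhibits $\gamma_{G^\circ_n}$ as an independent sum of uniformly bounded variables, infinitely many of which have variance bounded below by a fixed $\nu>0$, so \Cref{prop:zhang-4} applies directly. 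Two details worth writing out explicitly: the case $A=\emptyset$ should be read off the bar-ring formula itself (the bracketed factor is $1$, resp.\ $1+x$ in the Euler-genus case, both harmless constants); and in the Euler-genus case the sets $A_n$ must be re-defined with $\tilde D_{G_i}$, as you indicate, since they need not coincide with the genus ones. Your reduction of the Euler-genus variance condition to the genus one via \Cref{rmk:genus-discrepany-struct} (non-cactus implies non-tree) is correct. A further benefit of your route: it avoids the paper's opening move of ``extracting an infinite subsequence'' of $(G_i)$, which is not actually legitimate here, because the bar-rings $G^\circ_n$ are built from the original sequence and change if terms are discarded.
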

\begin{proof}
  Up to extraction of an infinite sub-sequence, we assume that $\gamma_{\max}(G_i) > \gamma_{\min}(G_i)$ for all $i$. For simplicity, we identify vertices in $G^\circ_n$ with the ones in the corresponding $G_i$'s. For each $G_i$, we define $\overline{G_i}$ to be its 2-core, that is, the subgraph obtained by removing pendant vertices successively until none exists. The vertices removed form \emph{tree-components} of $G_i$. As we may attach tree-components arbitrarily in an embedding of $\overline{G_i}$ without altering its genus, clearly $\gamma_{G_i} = \gamma_{\overline{G_i}}$, meaning that $\gamma_{\max}(\overline{G_i}) > \gamma_{\min}(\overline{G_i})$. By \cite[Lemma~2]{Sir91}, we know that $\overline{G_i}$ contain at least two vertex-intersecting cycles. Let $u$ (resp. $v$) be the attaching point that remains in $\overline{G_i}$ of the tree-component of $G$ containing $u_i$ (resp. $v_i$). Note that it is possible that $u = v$. By the property of 2-core, there is a cycle $C$ that goes through both $u$ and $v$. Note that $C$ may contain the same edge twice, but passes the same vertex at most twice.

  We note that the reasoning above holds for the graphs in $\graphseq$, not depending on orientability of embeddings. We now first consider the case of genus distribution. To establish a lower bound for $D_{G_i}(1)$, we notice that, if there is an embedding $M$ of $\overline{G_i}$ such that a directed run of $C$ is the border of a face, then by attaching the tree-component containing $u_i$ and $v_i$ to different sides of $C$, we have an embedding of $G_i$ that is counted by $\mathbf{d}_{G_i}(1)$. For a directed run of $C$ to be a border of a face in $M$, we now show that for each vertex $w \in C$, we need to fix $2$ edges in the cyclic order around $w$. The case when $C$ passes by $w$ only once is clear. Otherwise, $w$ is a cut vertex in $\overline{G_i}$, and we only need to fix the order of the two adjacent edges in one of the passes of $C$. For the attachment of tree-components containing $u_i$ and $v_i$, we simply fix them just after the edges of $C$ through $u$ and $v$ respectively. We thus have
  \[
    \frac{D_{G_i}(1)}{\Gamma_{G_i}(1)} \geq \left( \prod_{w \in G_i} d_w (d_w - 1) \prod_{w \in G_i, d_w \geq 3} (d_w - 2) \right)^{-1} > 0.
  \]
  As every $G_i$ has at most $M$ edges, and there is finitely many such graphs, there is a constant $c > 0$ such that $D_{G_i}(1) / \Gamma_{G_i}(1) \geq c$, meaning that $S_{G_i}(1) / \Gamma_{G_i}(1) \leq 1 - c$. We thus have $\prod_{i=1}^n S_{G_i}(1) = o(\prod_{i=1}^n \Gamma_{G_1}(1))$ when $n \to \infty$. By \Cref{lem:bar-ring-genus,lem:gen-fct-perturbe}, we know that the genus distributions of $(G^\circ_n)_{n \geq 1}$ is asymptotically normal.

  For the case of Euler-genus distribution, we notice that the construction above also works to give a lower bound on $\tilde{D}_{G_i}(1) / \Eulergenus_{G_i}(1)$, which suffices for an application of \Cref{lem:bar-ring-euler-genus,lem:gen-fct-perturbe} to obtain asymptotic normality.
\end{proof}

\begin{rmk} \label{rmk:bar-ring-strengthening}
  In the same vein of the discussion above \Cref{thm:pp-girth}, we may strengthen \Cref{thm:general-bar-ring} if we can bound $\frac{D_{G_i}(1)}{\Gamma_{G_i}(1)}$ above a strictly positive constant. For instance, this is the case when all the $G_i$'s have uniformly bounded maximal degrees, and the minimal lengths of cycles passing through both $u, v$ are also bounded uniformly. We see that this case is reminiscent to the conditions in \Cref{thm:pp-girth}. This is the case for star-ladders, which means that \Cref{thm:star-ladder-limit} is a special case of this strengthening.
\end{rmk}


\subsection{Strictly monotone sequences of graphs} \label{sec:strictly-monotone}

In this subsection, we consider sequences of graphs that are constructed by adding extra structures to edges using a extremely restricted kind of bar-ring construction, but such construction can be repeated in an arbitrary number of times. However, in this case, under proper assumption, we may still use the perturbative point of view to obtain asymptotical normality.

Let $e = uv$ be an edge of $G$, we attach an \tdef{open ear} to $e$ by replacing the edge $e$ with a 3-path $uxyv$ (or a 3-cycle $(uxy)$ if $u = v$) and adding a multiple edge paralleling the new edge $xy$. Alternatively, we attach a \tdef{closed ear} to $e$ by replacing $e$ with a 2-path $uxv$ (or a 2-cycle $(ux)$ if $u = v$) and attaching a loop to the new vertex $x$. We may attach several open and closed ears serially to $e$, one followed by another. For example, let $K_2$ represent the complete graph comprising two vertices, $u$ and $v$. We subsequently attach two open ears and two closed ears in sequence to the edge $e = uv$, as depicted in Figure \ref{fig:e}.

\begin{figure}
  \centering
  \includegraphics[width=10cm]{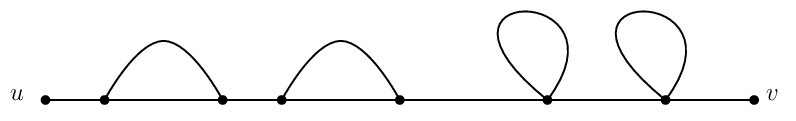}
  \caption{Attaching two open ears and two closed ears  to $K_2$.} \label{fig:e}
\end{figure}

A sequence of graphs $\graphseq = (G_i)_{i \geq 1}$ is \tdef{strictly monotone} if the following is satisfied:
\begin{enumerate}
\item For any $i, j$, the graphs $G_i$ and $G_j$, are homeomorphic, \emph{i.e.}, can be obtained one from another by edge subdivision and smoothing, if and only if $i=j$.
\item Each $G_i$ is homeomorphic to a subgraph of $G_{i+1}$.
\end{enumerate}
For instance, the sequences of complete graphs $(K_n)_{n \geq 1}$ and complete bipartite graphs $(K_{n, m})_{n \geq 1}$ are strictly monotone. The concept of strictly monotone sequences of graphs was first introduced by Chen and Gross in \cite{CG92b}. Using the theory of bridges and attachment as presented by Tutte \cite{Tut84}, they showed that attaching ears serially is the only method to construct limit points of values of average genus for 2-connected graphs. In a subsequent article \cite{Che10}, these findings were further expanded to encompass any type of graph by eliminating the requirement of 2-connectivity.

\begin{figure}
\centering
\includegraphics[width=9cm]{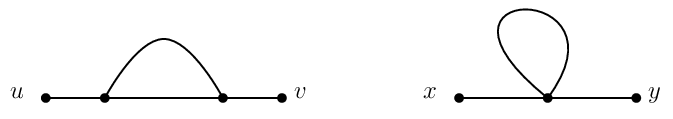}
\caption{Two graphs $(A,u,v)$ and $(B,x,y)$}. \label{fig:AB}
\end{figure}

Given a graph $G$, let $e$ be an edge of $G$. We denote by $G_{r,s}$ the graph obtained by attaching attaching $r$ open ears and $s$ closed ears serially in arbitrary order to $e$. We denote by $G'$ the graph obtained by breaking $e$ in $G$ into two edges, each adjacent to one of the pendant vertices $u', v'$. Let $(A, u, v)$ and $(B, x, y)$ be the graphs given in \Cref{fig:AB}. We see that $G_{r,s}$ is homeomorphic to a bar-ring of a sequence of $r$ copies of $(A, u, v)$, $s$ copies of $(B, x, y)$, and then $(G', u', v')$. We now show that the Euler-genus distribution of $G_{r,s}$ is asymptotically normal.

\begin{prop}\label{prop:gpoly:N}
  Let $G_{r,s}$ be a graph obtained by attaching $r$ open ears and $s$ closed ears serially to an edge of $G$, and $G'$ the graph obtained from breaking $e$ into two edges to pendant vertices in $G$. Then the Euler-genus distribution of $G_{r,s}$ is asymptotically normal when $r + s \to \infty$, with mean $(r + s)/2 + 1 + \expect \eulergenus_{G'}$ and variance $(r + s)/4 + \var \eulergenus_{G'}$.
\end{prop}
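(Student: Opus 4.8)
The plan is to mirror the proof of \Cref{thm:star-ladder-limit}: decompose $E_{G_{r,s}}(x)$ through the bar-ring formula into a dominant ``major part'' and a negligible ``perturbative part'', and transfer asymptotic normality from the former via \Cref{lem:gen-fct-perturbe}. First I would use that $G_{r,s}$ is homeomorphic to the bar-ring of its $r+s+1$ components --- $r$ copies of $(A,u,v)$, $s$ copies of $(B,x,y)$, and one copy of $(G',u',v')$ --- and that edge subdivision and smoothing preserve the Euler-genus law of every embedding, so $E_{G_{r,s}}(x)$ equals the Euler-genus polynomial of this bar-ring. \Cref{lem:bar-ring-euler-genus} then gives
\[
  E_{G_{r,s}}(x) = 2x^2\,E_A(x)^r E_B(x)^s E_{G'}(x) + (1+x-2x^2)\,\tilde S_A(x)^r \tilde S_B(x)^s \tilde S_{G'}(x).
\]

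The crucial simplification is that both ear-gadgets have Betti number $1$: $A$ is a digon (the doubled edge $xy$) carrying two pendant edges, and $B$ is a single loop carrying two pendant edges. Hence every orientable embedding of $A$ or $B$ is planar and every non-orientable one lies on the projective plane, so each Euler-genus polynomial has the shape $a(1+x)$; counting rotation systems gives $E_A(x)=4(1+x)$ and $E_B(x)=6(1+x)$. In other words, each attached ear contributes an independent Bernoulli$(1/2)$ increment to the Euler-genus.

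Consequently the major part $P(x)=2x^2 E_A(x)^r E_B(x)^s E_{G'}(x)$ is, up to a positive constant, $x^2(1+x)^{r+s}E_{G'}(x)$, so its normalization is the law of a shift of $\mathrm{Bin}(r+s,1/2)$ convolved with the fixed, independent law $\eulergenus_{G'}$. Asymptotic normality of $\mathrm{Bin}(r+s,1/2)$ (de Moivre--Laplace, or \Cref{prop:zhang-4}), together with the fact that adding the bounded fixed increment $\eulergenus_{G'}$ cannot destroy normality once the variance grows like $(r+s)/4\to\infty$, shows that $P$ is asymptotically normal. The mean and variance then follow by linearity and independence: the $r+s$ ear variables contribute $(r+s)/2$ to the mean and $(r+s)/4$ to the variance, the component $G'$ contributes $\expect\eulergenus_{G'}$ and $\var\eulergenus_{G'}$, and the prefactor $2x^2$ a constant additive shift, yielding the stated mean and variance.

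The remaining step is to check, via \Cref{lem:gen-fct-perturbe}, that the perturbative part $Q(x)=(1+x-2x^2)\,\tilde S_A(x)^r\tilde S_B(x)^s\tilde S_{G'}(x)$ is negligible. Here $Q(1)=0\ge 0$ and $P+Q=E_{G_{r,s}}$ has nonnegative coefficients, so it suffices to show $\Sigma(Q)=o(P(1))$. Since $\Sigma(1+x-2x^2)=4$ and the $\tilde S$'s have nonnegative coefficients, $\Sigma(Q)\le 4\,\tilde S_A(1)^r\tilde S_B(1)^s\tilde S_{G'}(1)$, whereas $P(1)=2\,E_A(1)^r E_B(1)^s E_{G'}(1)$. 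The one genuine point --- the step I expect to be the main (if modest) obstacle --- is to establish the strict inequalities $\tilde S_A(1)<E_A(1)$ and $\tilde S_B(1)<E_B(1)$, equivalently $\tilde D_A(1),\tilde D_B(1)>0$: for each gadget one must exhibit at least one embedding in which the two pendant vertices are incident to distinct faces (for instance the planar embedding of the digon placing $u$ and $v$ in different faces). This produces a constant $c<1$ with $\tilde S_A(1)/E_A(1),\,\tilde S_B(1)/E_B(1)\le c$, whence $\Sigma(Q)/P(1)=O(c^{\,r+s})\to 0$ as $r+s\to\infty$. \Cref{lem:gen-fct-perturbe} then transfers asymptotic normality, together with the limiting mean and variance, from $P$ to $\eulergenus_{G_{r,s}}$.
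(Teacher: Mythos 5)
Your proposal is correct and follows essentially the same route as the paper: the same bar-ring decomposition via \Cref{lem:bar-ring-euler-genus} into the dominant part $P(x) = 2x^2 E_{G'}(x)(4+4x)^r(6+6x)^s$ and the perturbative part $Q(x)$, the same identification of $P$ with a shifted binomial convolved with $\eulergenus_{G'}$, and the same transfer of normality via \Cref{lem:gen-fct-perturbe}. The only (inessential) difference is that the paper computes the partial polynomials explicitly, namely $\tilde{S}_A(x) = 2+4x$ and $\tilde{S}_B(x) = 4+6x$, to get $\Sigma(Q) \leq 4 \tilde{S}_{G'}(1)\, 6^r 10^s = o(P(1))$ directly, whereas you obtain the needed strict ratio bounds $\tilde{S}_A(1) < E_A(1)$ and $\tilde{S}_B(1) < E_B(1)$ qualitatively by exhibiting embeddings with the pendant vertices on distinct faces, i.e., $\tilde{D}_A(1), \tilde{D}_B(1) > 0$.
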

\begin{proof}
  From \Cref{lem:bar-ring-euler-genus} and the fact that $D_A(x) = D_B = 2$,  $S_A(x) = 2 + 4x$, and $S_{B}(x) = 4 + 6x$, we have the following expression of $\Eulergenus_{G_{r,s}}(x)$:
  \begin{align*}
    \Eulergenus_{G_{r,s}}(x) &= P(x) + Q(x), \\
    P(x) &= 2 x^2 \Eulergenus_{G'}(x) (4+4x)^r (6+6x)^s, \\
    Q(x) &= (1 + x - 2x^2) S_{G'}(x) (2+4x)^r (4+6x)^s.
  \end{align*}

  It is clear by the usual central limit theorem that the random variable $X$ corresponding to $P(x)$ is asymptotically normal with mean $(r + s)/2 + 1 + \expect \eulergenus_{G'}$ and variance $(r + s)/4 + \var \eulergenus_{G'}$ asymptotically, as $X$ is the sum of $r + s$ independent random Bernoulli variables with parameter $1/2$ along with $\eulergenus_{G'} + 2$. We have $P(1) = 2 E_{G'}(1)8^r 12^s$. Now, recall that, given a polynomial $R(x)$, we denote by $\Sigma(R)$ the sum of the absolute values of all its coefficients. We have $\Sigma(Q) \leq 4 S_{G'}(1) 6^r 10^s = o(P(1))$ when $r + s \to \infty$. By \Cref{lem:gen-fct-perturbe}, we know that the random variable $\eulergenus_{G_{r,s}}$ given by the generating function $\Eulergenus_{G_{r,s}}(x)/\Eulergenus_{G_{r,s}}(1)$ is asymptotically normal when $r + s \to \infty$.
\end{proof}

\begin{thm}\label{thm:Asynor:bicon}
  Let $\graphseq = (G_n)_{n \geq 1}$ be a strictly monotone sequence of 2-edge-connected graphs such that the maximum genera of all graphs $G_j$ are bounded by some uniform constant. Then the Euler-genus distributions of $G_{n}$ are asymptotically normal.
\end{thm}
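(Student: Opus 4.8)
The plan is to reduce the statement to \Cref{prop:gpoly:N} by showing that a strictly monotone sequence of $2$-edge-connected graphs with uniformly bounded maximum genus is, up to homeomorphism and after discarding a finite prefix, obtained by attaching ears serially to a single edge of a fixed base graph. The first step, and the one that makes the maximum-genus hypothesis usable, is to note that every embedding of $G_n$ has genus at most $\gamma_{\max}(G_n)$, so the average orientable genus $\expect \gamma_{G_n}$ is bounded by the same uniform constant. Hence $(\expect \gamma_{G_n})_{n \ge 1}$ stays bounded and accumulates at a finite value, while the Betti numbers $\beta(G_n)$ must tend to infinity: they are non-decreasing along the sequence by the subgraph-homeomorphism condition, and $2$-edge-connectivity forbids the tree-like growth that would keep them constant.

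Second, I would invoke the structure theory of Chen and Gross \cite{CG92b}, extended to general graphs in \cite{Che10} and resting on Tutte's theory of bridges and attachments \cite{Tut84}, which shows that a strictly monotone sequence can keep its average genus bounded only if it arises, up to homeomorphism, from serial ear attachment. Together with the growth $\beta(G_n) \to \infty$, this should yield the normal form: a fixed $2$-edge-connected graph $H$ with a distinguished edge $e$ such that, for all large $n$, $G_n$ is homeomorphic to the graph $H_{r_n, s_n}$ obtained by attaching $r_n$ open and $s_n$ closed ears serially to $e$, with $r_n + s_n \to \infty$. The base $H$ stabilizes because the bound on $\gamma_{\max}$ caps its essential complexity—equivalently, a maximum-genus formula forces a linearly growing Betti deficiency—so only finitely many cores can appear in the non-decreasing sequence of cores.

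Third, since the Euler-genus distribution is a homeomorphism invariant—edge subdivision and smoothing leave the face structure of embeddings unchanged, a fact already used implicitly in \Cref{prop:gpoly:N}—the Euler-genus law of $G_n$ equals that of $H_{r_n, s_n}$. Applying \Cref{prop:gpoly:N} with $G = H$ and $G' = H'$, the graph obtained by breaking $e$ into two pendant edges, then shows that $\eulergenus_{H_{r_n, s_n}}$ is asymptotically normal with mean $(r_n + s_n)/2 + 1 + \expect \eulergenus_{H'}$ and variance $(r_n + s_n)/4 + \var \eulergenus_{H'}$; since $H'$ is fixed these constant corrections are of lower order, and the asymptotic normality of $(\eulergenus_{G_n})_{n \ge 1}$ follows.

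The hard part will be the structural reduction of the second step: promoting the qualitative principle ``bounded average genus forces serial ear attachment'' to the precise normal form with a \emph{stabilized} base $H$ and a \emph{single} attachment edge $e$. The delicate points are ruling out that ears accumulate on several edges at once and that the core keeps growing, which is precisely where the uniform bound on $\gamma_{\max}$ enters. If the reduction only delivers ears spread over a bounded number of edges rather than a single one, the conclusion still follows by expressing $G_n$ as a bar-ring, applying \Cref{lem:bar-ring-euler-genus}, and running the perturbation estimate of \Cref{lem:gen-fct-perturbe} exactly as in \Cref{prop:gpoly:N}, since the dominant term remains a product of independent near-Bernoulli factors whose sum is asymptotically normal.
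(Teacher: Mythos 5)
Your proposal is correct in substance and follows the same two-step skeleton as the paper: a structure theorem reducing $G_n$ (for all large $n$) to a fixed base graph with ears attached serially to a fixed set of edges, followed by the product-plus-perturbation argument of \Cref{prop:gpoly:N} and \Cref{lem:gen-fct-perturbe}. Two points of comparison. First, for the structural step the paper observes that $\gamma_{\max}(G_n)$ is non-decreasing and bounded, hence stationary, and then invokes \cite[Theorem~1.2]{CL07} or \cite[Theorem~4.5]{Jia94}; you instead pass to bounded average genus (via $\expect \gamma_{G_n} \leq \gamma_{\max}(G_n)$) and invoke the average-genus limit-point theory of \cite{CG92b,Che10}. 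Both routes rest on the same Tutte-style theory of bridges, and both deliver the needed conclusion, so this is a legitimate variant rather than a gap. Second, and more importantly, the single-edge normal form your main line aims for is genuinely unattainable: ears can accumulate on several edges at once (attach ears alternately to two edges of a theta graph; the resulting sequence is strictly monotone, 2-edge-connected, and has bounded maximum genus), so your ``fallback'' --- ears spread over a bounded edge set $E$, the graph decomposed by applying \Cref{lem:bar-ring-euler-genus} successively on each $e \in E$, dominant term a product of near-Bernoulli factors, error killed by \Cref{lem:gen-fct-perturbe} --- is not a contingency but the actual proof, and it is exactly what the paper does. One technical point that you (and, to be fair, the paper) treat tersely: the perturbation estimate needs $r_{e,n} + s_{e,n} \to \infty$ for \emph{every} retained edge $e \in E$, not just for the total ear count, since a single edge with bounded ear count produces a summand of $Q_n$ comparable to $P_n(1)$; this is repaired by absorbing any such edge into a larger base graph $G_N$, which is the paper's ``up to taking some larger $N$ and modifying $E$'' step, and your Betti-number argument alone does not yet give this per-edge growth.
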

\begin{proof}
  If $H$ is homeomorphic to a subgraph of $G$, then $\gamma_{\max}(H) \leq \gamma_{\max}(G)$. Hence, the sequence $(\gamma_{\max}(G_n))_{n \geq 1}$ is increasing. As it is also bounded, $\gamma_{\max}(G_n)$ becomes stationary. By \cite[Theorem~1.2]{CL07} or \cite[Theorem~4.5]{Jia94}, there exists an index $N$ such that all but finitely many graphs in $\graphseq$ can be obtained by attaching ears serially to edges $e \in E$ for some subset $E$ of edges in $G_N$.

  For $n > N$, we assume that $G_n$ is obtained by attaching $r_{e, n}$ open ears and $s_{e, n}$ closed ears sequentially to each edge $e \in E$ of $G_N$. Up to taking some larger $N$ and modifying $E$, we may assume that $r_{e, n} + s_{e, n} \to \infty$ when $n \to \infty$. We take $r_n = \sum_{e \in E} r_{e, n}$, $s_n = \sum_{e \in E} s_{e, n}$, and $k = |E|$.

  Using \Cref{lem:bar-ring-genus} successive on each $e \in E$, the Euler-genus polynomial of $G_n$ can be expressed as
  \begin{align*}
    \Eulergenus_{G_n}(x) &= P_n(x) + Q_n(x), \\
    P_n(x) &= 2^{k} x^{2k} \Eulergenus_{G_N}(x) (4 + 4x)^{r_n} (6 + 6x)^{s_n},
  \end{align*}
  and $Q_n(x)$ is a sum of terms that are products of some constant polynomial with terms of the form $(4 + 4x)^{r_{e, n}}$, $(6 + 6x)^{s_{e, n}}$, $(2 + 4x)^{r_{e, n}}$, and $(4 + 6x)^{s_{e, n}}$. Furthermore, each summand in $Q_n(x)$ contains at least one of the two last types of terms. We thus conclude by an argument similar to that in the proof of \Cref{prop:gpoly:N} that $\Sigma(Q_n) = o(P_n(1))$, as $r_{e, n} + s_{e, n} \to \infty$ when $n \to \infty$. It is thus clear that the random variables $X_n$ associated with $P_n(x)$ are asymptotically normal, with mean $(r_n+s_n)/2 + 2k + \expect \eulergenus_{G_N}$, and variance $(r_n+s_n)/4 + \var \eulergenus_{G_N}$. By \Cref{lem:gen-fct-perturbe}, we know that the distribution of $\eulergenus_{G_n}$ is asymptotically normal.
\end{proof}

A \tdef{cactus graph} (or simply \tdef{cactus}) is a connected graph with maximum genus $0$. Thus, cacti are graphs in which any two cycles have no vertex in common. We note that this definition is different from the usual one where two cycles are allowed to share one vertex, but the current definition is motivated by the result in \cite{unique-genus-graph} stating that graphs with unique genus are exactly cactus graphs defined here.

\begin{thm} \label{thm:Asynor:max-genus}
  Let $\graphseq = (G_n)_{n \geq 1}$ be a strictly monotone sequence of graphs whose maximum genus is uniformly bounded from above. Then the Euler-genus distributions of $\graphseq$ are asymptotical normal.
\end{thm}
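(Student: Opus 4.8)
The plan is to follow the proof of \Cref{thm:Asynor:bicon} and to isolate the single place where $2$-edge-connectivity was used, supplying a separate argument there. As in that proof, $\gamma_{\max}(G_n)$ is nondecreasing in $n$ and bounded, hence eventually constant, so by the structural results \cite{CL07,Jia94,Che10} there is an index $N$ so that every $G_n$ with $n>N$ arises from the fixed graph $G_N$ by serially attaching $r_{e,n}$ open and $s_{e,n}$ closed ears to the edges $e$ of some $E\subseteq E(G_N)$, with $\sum_e(r_{e,n}+s_{e,n})\to\infty$. The new point is that \Cref{lem:bar-ring-euler-genus} may be invoked at an edge $e$ only when deleting $e$ leaves the ``rest'' connected, i.e.\ only at a non-bridge edge: at a bridge the two pendant vertices lie in different components, so $\tilde S_{G'}$ degenerates and the bar-ring identity is unavailable. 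I therefore split $E=E_{\mathrm{nb}}\sqcup E_{\mathrm b}$ into the non-bridge and bridge edges of $G_N$, and handle the bridge edges by bar-amalgamation instead.

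Serially attaching ears to a bridge $e=uv$ produces a path of loops (from the closed ears) and digons (from the open ears) joined to the two sides of $e$ through bridges, so by \Cref{coro:bar} it contributes \emph{exactly}, with no error term. Cutting every bridge of $G_n$ thus yields finitely many ``large'' $2$-edge-connected components---one per cycle-containing $2$-edge-connected component of $G_N$, enlarged by its non-bridge ears---together with an unbounded number of ``small'' loop and digon blocks produced by the bridge ears. By \Cref{coro:bar} the Euler-genus polynomial factors, up to the degree constants of \Cref{thm:bar}, as $\Eulergenus_{G_n}(x)=c_n\,B_n(x)\prod_j\Eulergenus_{C_j}(x)$, where $B_n$ is the product of the (positive-coefficient) Euler-genus polynomials of the small blocks and the $C_j$ are the large components. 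Running the bar-ring argument of \Cref{thm:Asynor:bicon} on each $C_j$ gives $\Eulergenus_{C_j}=P_j+Q_j$ with $\Sigma(Q_j)=o(P_j(1))$. Put $P_n=c_n\,B_n\prod_jP_j$ and $Q_n=\Eulergenus_{G_n}-P_n$: the positive factor $c_nB_n$ cancels in every ratio, and since there are only finitely many $C_j$ the cross-terms obey $\Sigma(Q_n)/P_n(1)\le\prod_j\bigl(1+\Sigma(Q_j)/P_j(1)\bigr)-1\to0$, so $\Sigma(Q_n)=o(P_n(1))$; note also that $Q_n(1)=0$ because the bar-ring prefactor $1+x-2x^2$ vanishes at $x=1$.

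It remains to see that the variable $X_n$ with generating function $P_n/P_n(1)$ is asymptotically normal, and here everything funnels into one central limit theorem. By construction $X_n$ is an independent sum of: the bounded Euler-genera of the finitely many fixed $2$-edge-connected skeletons of the $C_j$; one $\mathrm{Bernoulli}(1/2)$ summand per non-bridge ear, coming from the normalized bar-ring factors $(4+4x)/8$ and $(6+6x)/12$; and one bounded loop- or digon-Euler-genus summand per bridge ear, each of strictly positive variance. Thus $X_n=\sum_k\xi_k$ with the $\xi_k$ independent, $|\xi_k|\le C$ for a fixed $C$ (all blocks have uniformly bounded size), and $\var X_n=\Theta\!\left(\sum_e(r_{e,n}+s_{e,n})\right)\to\infty$; \Cref{prop:zhang-4} gives the asymptotic normality of $X_n$, and \Cref{lem:gen-fct-perturbe} transfers it to $\eulergenus_{G_n}$. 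The main obstacle is precisely this bridge case: one must notice that the bar-ring identity breaks at a bridge and replace it by the exact bar-amalgamation decomposition, then check that the resulting perturbation stays negligible after multiplication by the bridge factors---which works only because the number of large components is bounded by the fixed graph $G_N$---and that the surviving main term is a genuine sum of uniformly bounded, independent, positive-variance pieces, so that the Lindeberg condition of \Cref{prop:zhang-4} applies whether the growth is driven by proliferating small blocks or by the enlargement of the large components.
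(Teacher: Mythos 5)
Your reduction rests on the claim that, after some index $N$, every $G_n$ is obtained from $G_N$ by serially attaching ears to a fixed edge set $E\subseteq E(G_N)$; your entire case split (non-bridge edges via the bar-ring identity, bridge edges via \Cref{coro:bar}) is built on that claim, and this is where the gap lies. For sequences that are not 2-edge-connected, ears alone do not exhaust the possible growth: the paper's proof invokes a strictly stronger structure statement, namely that $G_n$ is obtained from $G_N$ by attaching ears serially to edges \emph{and} by attaching cacti (trees of cycles) by bar-amalgamation. Serial ear attachment, even at a bridge, only ever produces a \emph{linear} chain of digons and loops in series inside that edge, so the number of branch vertices of $G_n$ outside $G_N$ stays bounded. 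But a strictly monotone sequence with bounded maximum genus can grow \emph{branching} cacti: for instance, let $G_n$ be $K_4$ with a depth-$n$ binary tree of triangles attached by bridges. Since bar-amalgamation multiplies genus polynomials (\Cref{thm:bar}) and cacti have maximum genus $0$, we get $\gamma_{\max}(G_n)=\gamma_{\max}(K_4)$ for all $n$, yet the number of branch vertices in the cactus part is unbounded, so no fixed $G_N$ and fixed $E$ can realize all $G_n$ by serial ears. Your proof as written does not cover such sequences. The paper handles them by computing the Euler-genus polynomial of a cactus $C$ exactly, $\Eulergenus_C(x)=2^{|E(C)|-\beta(C)}(1+x)^{\beta(C)}\prod_{v}(d_v-1)!$, so the attached cacti contribute an exact factor $(1+x)^{t_n}$ to the main term $P_n$, i.e.\ $t_n$ further independent Bernoulli$(1/2)$ summands, where $t_n$ is the total Betti number of the attached cacti; the rest of the argument is then run exactly as in \Cref{thm:Asynor:bicon}.

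Apart from this, your argument matches the paper's, and your treatment of bridges is sound and in fact more explicit than the paper's: a chain of loops and digons in series inside a bridge is an iterated bar-amalgamation (each loop or digon block is joined to the two sides by the bridge edges, the loop's cut vertex serving as an amalgamation endpoint), so \Cref{coro:bar} gives an exact Bernoulli$(1/2)$ factor per such ear --- precisely the same kind of contribution as the paper's cacti. Your machinery (exact factors from bar-amalgamations, perturbative factors from bar-rings, then \Cref{prop:zhang-4} and \Cref{lem:gen-fct-perturbe}) would absorb the missing case with essentially no change; what is missing is the case itself. One smaller point: for the estimate $\Sigma(Q_j)=o(P_j(1))$ on each large component you need $r_{e,n}+s_{e,n}\to\infty$ for \emph{every} edge $e$ treated perturbatively, not just divergence of the sum over $e$ (a term of $Q_j$ carrying only a bounded factor $(2+4x)^{r_{e,n}}$ need not be negligible); as in the paper, this is arranged by enlarging $N$ and shrinking $E$ so that edges receiving only boundedly many ears are absorbed into $G_N$.
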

\begin{proof}
  Similar to the argument at the beginning of the proof of \Cref{thm:Asynor:bicon}, there is an index $N$ such that all but a finite number of graphs in $\graphseq$ can be obtained by attaching ears serially by bar-amalgamation to each edge in a subset $E$ of edges in $G_N$ and attaching cacti to $G_N$. For $n \geq N$, we denote by $r_{e, n}$ and $s_{e, n}$ the number of open (resp. closed) ears added serially to $e \in E$ to obtain $G_n$, and $t_n$ the sum of Betti numbers of each cactus added to edges in $E$ in $G_N$ by bar-amalgamation after subdividing the edges to obtain $G_n$. Again, up to changing $N$, we may assume that for each $e \in E$, we have $r_{e, n} + s_{e, n} \to \infty$ when $n \to \infty$. We take $r_n = \sum_{e \in E} r_{e, n}$, $s_n = \sum_{e \in E} s_{e, n}$, and $k = |E|$.

  We first observe that the Euler-genus polynomial of a cactus $C$ with vertex set $V(C)$ and edge set $E(C)$ is
  \[
    \Eulergenus_C(x) = 2^{|E(C)| - \beta(C)} (1 + x)^{\beta(C)} \prod_{v \in V(C)} (d_v - 1)!.
  \]
  This is because $\beta(C)$ is the number of cycles in a cactus, and for an embedding of $C$, its genus depends on the number of cycles that are not separating on the surface, \emph{i.e.}, having an odd number of twisted edges. Each such cycle contributes $1$ to the Euler-genus of the embedding. We thus have the expression by accounting for all possibilities.

  By \Cref{lem:bar-ring-euler-genus}, and using the same reasoning as in the proof of \Cref{thm:Asynor:bicon}, the Euler-genus polynomial of $G_n$ can be expressed as
  \begin{align*}
    \Eulergenus_{G_n} &= P_n(x) + Q_n(x), \\
    P_n(x) &= \kappa 2^k x^{2k} (1 + x)^{t_n} \Eulergenus_{G_N}(x) (4 + 4x)^{r_n} (6 + 6x)^{s_n},
  \end{align*}
  where $\kappa$ is a constant depending only on the attached cacti, and $Q(x)$ satisfies $\Sigma(Q_n) = o(P_n(1))$ when $r_{e, n} + s_{e, n} + t_{e, n} \to \infty$. The random variable $X_n$ associated to $P_n(x)$ is the sum of $2k$, $\eulergenus_{G_N}$ and $r_n + s_n + t_n$ independent random variables following the Bernouill distribution with parameter $1/2$. It is clear that $(X_n)_{n \geq N}$ is asymptotically normal, with mean $2k + \expect \eulergenus_{G_N} + (r_n + s_n + t_n) / 2$ and variance $\var \eulergenus_{G_N} + (r_n + s_n + t_n) / 4$. By \Cref{lem:gen-fct-perturbe}, we know that $(\eulergenus_{G_n})_{n \geq N}$ is also asymptotically normal, with the same mean and variance.
\end{proof}


\section{Double-edge cycle graphs via analytic combinatorics} \label{sec:double-edge-cycle}

For some particular families of graphs, we may compute explicitly the generating functions of their embedding distributions. In this case, we may use tools in analytic combinatorics to obtain the asymptotic normality of these embedding distributions. In this section, we apply this approach to double-edge cycle graphs.

\subsection{Genus distribution for double-edge cycle graphs}

Let $C_n$ denote the cycle graph with $n$ vertices. When each edge of $C_n$ is replaced with a pair of parallel edges, we get the \tdef{double-edge cycle graph}, denoted by $C_n^2$. The double-edge cycle graph has been widely used as a fundamental graph for constructing quadrangular embeddings of the complete graph \cite{Nog16}. We have the following results about the genus distribution of $C_n^2$.

\begin{thm} \label{thm:double-cycle-genus}
  The genus distribution of  $C_n^2$ is asymptotically normal with mean $(1/4)n$ and variance $(3/32)n$. Furthermore, for $t=\frac{1}{1-2g/n}$, as $n\to \infty$ and uniformly for $g/n$ in any closed subinterval of $(0,1/2)$, we have
  \[
    \gamma_g(C_n^2)\sim \frac{t}{3}\sqrt{\frac{2t(t^2-1)}{\pi n}} \left(\frac{3}{t^2-1}\right)^{g}(2(1+t))^{n}.
  \]
\end{thm}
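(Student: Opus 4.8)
The plan is to obtain an explicit closed form for the genus polynomial $\Gamma_{C_n^2}(x)$ via a transfer matrix, and then read off both the asymptotic normality and the precise local asymptotics using standard tools of analytic combinatorics (the quasi-powers theorem and the saddle-point method). First I would set up a transfer matrix. Since $C_n^2$ is a cyclic chain of $n$ digons, I would track, as one traverses the cycle digon by digon, how the partial face boundaries glue across the interface between consecutive vertices, either by the face-tracing recursions of \cite{FGS89} already used in \Cref{prop:half-ladder-partial-genus}, or by applying \Cref{lem:bar-ring-genus} to the digon ring. This should produce a constant-size matrix $T(x)$ with polynomial entries such that $\Gamma_{C_n^2}(x) = \sum_i c_i(x)\lambda_i(x)^n$, the sum over the eigenvalues $\lambda_i(x)$ of $T(x)$, with prefactors $c_i(x)$ coming from the cyclic closure. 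The exponential rate appearing in the target formula forces the dominant eigenvalue to be $\lambda(x) = 2 + 2\sqrt{1+3x}$, the larger root of $\mu^2 - 4\mu - 12x = 0$; I would check that $\lambda(1) = 6$, so that $\Gamma_{C_n^2}(1) = 6^n$ equals the total number of rotation systems, and that the leading prefactor satisfies $c(1) = 1$. The analytic fact to establish here is that $\lambda(x)$ strictly dominates the other eigenvalues in modulus throughout a complex neighborhood of the positive real axis, so that $\Gamma_{C_n^2}(x) = c(x)\lambda(x)^n(1 + o(1))$ uniformly there.

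For the asymptotic normality I would feed the uniform estimate $\Gamma_{C_n^2}(x)/\Gamma_{C_n^2}(1) = \frac{c(x)}{c(1)}(\lambda(x)/\lambda(1))^n(1+o(1))$ near $x=1$ into the quasi-powers theorem of Hwang. Writing $m(x) = x\lambda'(x)/\lambda(x)$, the mean and variance are $n\,m(1) + O(1)$ and $n\,m'(1)+O(1)$ respectively. A direct computation gives $\lambda'(x) = 3/\sqrt{1+3x}$, so that $\lambda'(1) = 3/2$, $m(1) = \tfrac14$ and $m'(1) = \tfrac{3}{32}$, yielding the stated mean $n/4$ and variance $3n/32$. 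Since $m'(1) = 3/32 > 0$, the variability condition holds and normality follows.

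For the precise estimate on $\gamma_g(C_n^2)$ I would apply the saddle-point method to the dominant term $c(x)\lambda(x)^n$. The saddle point $x_0 = x_0(g/n)$ is the unique positive solution of $m(x_0) = g/n$; solving this gives exactly $x_0 = (t^2-1)/3$ with $t = 1/(1-2g/n)$, and correspondingly $\lambda(x_0) = 2(1+t)$, reproducing the factor $(2(1+t))^n (3/(t^2-1))^g$. The Gaussian width of the saddle is governed by $V(x_0) = x_0\,m'(x_0) = (t^2-1)/(4t^3)$, and assembling $c(x_0)\lambda(x_0)^n x_0^{-g}(2\pi n V(x_0))^{-1/2}$ should reproduce the announced prefactor $\frac{t}{3}\sqrt{\frac{2t(t^2-1)}{\pi n}}$, the polynomial prefactor $c(x_0)$ accounting for the remaining factor $x_0 = (t^2-1)/3$. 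As a consistency check, at $g/n = 1/4$ one has $t=2$, $x_0 = 1$ and $V = 3/32$, matching the central variance obtained above.

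The main obstacle will be twofold. First, setting up the transfer matrix correctly: identifying the right finite set of interface states for the face structure of the orientable embeddings of the digon necklace, and verifying both the strict dominance of $\lambda(x)$ and the exact leading prefactor $c(x)$. Second, making the saddle-point analysis uniform for $g/n$ in an arbitrary closed subinterval of $(0,1/2)$; this requires uniform control of the location of $x_0$ (bounded away from $0$ and from the branch point of $\lambda$), uniform Gaussian decay along the circle $|x| = x_0$, and uniform negligibility of the subdominant eigenvalue contributions. These estimates are routine but must be checked with care to justify a single uniform asymptotic equivalence.
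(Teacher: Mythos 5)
Your proposal is correct and follows essentially the same route as the paper: the paper imports the order-6 recurrence for $\Gamma_{C_n^2}(x)$ from \cite{CG18} (itself obtained by transfer/production-matrix methods), writes the rational generating function, factors its denominator to identify the dominant simple pole $r(x) = 1/\lambda(x)$ with $\lambda(x) = 2 + 2\sqrt{1+3x}$, and then applies Bender's Theorems 1 and 3, which are precisely the quasi-powers and saddle-point (local limit) statements you invoke. All of your key quantities check against the paper's: the dominant eigenvalue, the prefactor $c(x) = x$, the mean $\mu(1) = 1/4$ and variance $\sigma^2(1) = 3/32$, the saddle point $x_0 = (t^2-1)/3$ with $\lambda(x_0) = 2(1+t)$, and the Gaussian width $(t^2-1)/(4t^3)$ assembling into the stated prefactor $\frac{t}{3}\sqrt{\frac{2t(t^2-1)}{\pi n}}$.
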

\begin{proof}
  The genus polynomial for $C_n^2$ has been calculated by Gross \cite{Gr11}, Baek and Park \cite{BP11}, and Chen and Gross \cite{CG18}. The recurrence below is derived in \cite{CG18} for the genus polynomial $\Gamma_n(x)$ of $C_n^2$:
  \begin{equation}
    \Gamma_n(x) = \sum_{j=1}^6 b_j(x) \Gamma_{n-j}(x), \label{eq:recursion1}
  \end{equation}
  where
  \begin{align*}
    &b_1(x) = 6, &&b_{2}(x) = 28x - 8, &&&b_{3}(x) = -96x,\\
    &b_4(x) = -16x (15x-2), &&b_5(x) = 288x^2, &&&b_6(x) = 576x^3,
  \end{align*}
  with initial values
  \begin{align*}
    &\Gamma_1(x) = 2x + 4, &&\Gamma_2(x) = 30x + 6,\\
    &\Gamma_3(x) = 72x^2 + 136x + 8, &&\Gamma_4(x) = 840x^2 + 440x + 16,\\
    &\Gamma_5(x) = 1440x^3 + 4832x^2 + 1472x + 32, &&\Gamma_6(x) = 18912x^3 + 22496x^2 + 5184x + 64.
  \end{align*}
  We define the generating function
  \[
    \Gamma(x,y) =\sum_{n\ge 1} \Gamma_n(x)y^n.
  \]
  It follows from \eqref{eq:recursion1} and the initial values that $\Gamma(x,y) = A(x,y) / B(x,y)$ is rational, where
  \begin{align}
    \begin{split}
      A(x,y) &= \sum_{j=1}^6 \Gamma_j(x)y^j - \sum_{j=1}^5 \sum_{k=1}^{6-j} b_j(x) \Gamma_k(x) y^{j+k} \\
             &= 288x^2 (x+1) y^6 - 48x (2x^2-7x -3) y^5 -16 (15x^2 -5x -1) y^4 \\
             &\quad\quad\quad + 4 (4x^2 -35x + 1) y^3 + 18 (x - 1) y^2 + 2 (x + 2) y,
    \end{split}
    \\
    B(x,y) &= 1 - \sum_{j=1}^6 b_j(x) y^j = (1-4xy^2) (1-4y-12xy^2) (1-2y-12xy^2).
  \end{align}
  Thus, the singularities of $\Gamma(x,y)$ are the zeros of $B(x,y)$, which are
  \[
    y = \frac{\pm\sqrt{1+3x}-1}{6x}, \quad y = \frac{\pm\sqrt{1+12x}-1}{12x}, \quad y = \pm\frac{1}{2\sqrt{x}}.
  \]
  It is easy to see that the dominant singularity (the one with minimum modulus) of $\Gamma(x,y)$ is given by
  \begin{equation}
    y = r(x) = \frac{1}{2(1+\sqrt{1+3x})}, \label{eq:r}
  \end{equation}
  which satisfies $1 - 4y - 12xy^2 = 0$. Since $r(x)$ is a simple pole, as $y\to r(x)$, we obtain the following uniformly for $x$ in any closed sub-interval of $(0,\infty)$:
  \begin{align*}
    \begin{split}
      \Gamma(x,y) &= \frac{A(x,r(x))}{(1-4xr(x)^2)(1-4r(x)-12xr(x)^2)} \cdot \frac{1}{(-2-24xr(x))(y-r(x))}\\
                  &\qquad + O\left((y-r(x))^{-2}\right) \\
                  &= \frac{x}{1-y/r(x)} + O\left(\frac{1}{(1-y/r(x))^{2}}\right).
    \end{split}
  \end{align*}
  It follows that, as $n \to \infty$, uniformly for $x$ in any closed sub-interval of $(0,\infty)$, we have
  \[
    \Gamma_n(x) \sim xr(x)^{-n}.
  \]
  Applying \cite[Theorem~1]{Ben73}, we conclude that the coefficients of $\Gamma_n(x)$ is asymptotically normal with mean $\mu(1)n$ and variance $\sigma^2(1) n$, where
  \begin{align}
    \mu(x) &:= -x \frac{d}{dx} \ln r(x) = \frac{1}{2}-\frac{1}{2}(1+3x)^{-1/2}, \label{eq:mu}\\
    \sigma^2(x) &:=x\mu'(x)= \frac{3x}{4}(1+3x)^{-3/2}. \label{eq:var}
  \end{align}
  Evaluating the expressions above at $x=1$, we complete the proof of the asymptotical normality.

  For the ``Furthermore'' part, we note that $r(x)$ satisfies $|r(x)| > r(|x|)$ when $x \ne |x|$. Applying \cite[Theorem~3]{Ben73}, when $n \to \infty$, we have the following uniformly for all $g / n$ in a closed sub-interval of $(0,1/2)$:
  \[
    [x^g] \Gamma_n(x) \sim \frac{x}{\sigma(x) \sqrt{2\pi n}} x^{-g} r(x)^{-n},
  \]
  where $x>0$ satisfies $\mu(x) = g / n$. Setting $t = \sqrt{1+3x}$ and using \Cref{eq:r,eq:mu,eq:var}, we complete the proof.
\end{proof}

\subsection{Euler-genus distribution for double-edge cycle graphs}

Let $P_n$ be the path graph with $n$ vertices. Let $P_n^2$ be the \tdef{double-edge path graph} obtained by doubling each edge of $P_n$. Let $u, v$ be the two degree vertices of $P_n^2$. Then, we observe that $C_n^2$ is obtained from $P_n^2$ by connecting $u$ and $v$ with a pair of parallel edges.

Our procedure for calculating the Euler-genus polynomials of $C_n^2$ is as follows. Firstly, we calculate the ten first partial Euler-genus polynomials for $P_n^2$. Then, using the edge-adding rules, we obtain the Euler-genus polynomial of $C_n^2$ by adding a pair of parallel edges from $P_n^2$. The detailed calculation procedure for the Euler-genus distribution of $C_n^2$ can be found \Cref{apdx:double-cycle-euler-genus}.

\begin{thm} \label{thm:double-cycle-euler-genus}
  The Euler-genus distribution of $C_n^2$ is asymptotically normal with mean $(5/7) n$ and variance $(78/343) n$.
\end{thm}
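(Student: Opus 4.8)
The plan is to reproduce verbatim the analytic-combinatorial argument of \Cref{thm:double-cycle-genus}, with the genus polynomial replaced throughout by the Euler-genus polynomial. The crux is to obtain a rational bivariate generating function $E(x,y) = \sum_{n \geq 1} E_{C_n^2}(x) y^n$, after which the singularity analysis and the application of Bender's theorem are structurally identical to the orientable case.

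The first step is to assemble, via the edge-adding and face-tracing rules of \cite{CG18}, a system of \emph{partial Euler-genus polynomials} for the double-edge path $P_n^2$, tracking the local face structure at its two degree-$2$ endpoints $u,v$. Because Euler-genus embeddings force us to record not only whether the faces incident to $u$ and $v$ coincide but also the twistedness of the edges involved, the relevant state space is larger than in the orientable setting; as the preamble to the theorem indicates, ten partial polynomials suffice. I would write down linear recurrences in $n$, with coefficients polynomial in $x$, for these ten polynomials; the detailed derivation is deferred to \Cref{apdx:double-cycle-euler-genus}.

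The second step is to close $P_n^2$ into $C_n^2$ by adding a pair of parallel edges between $u$ and $v$, which expresses $E_n(x) := E_{C_n^2}(x)$ as a fixed linear combination of the ten partial polynomials and hence yields a linear recurrence for $E_n(x)$. Forming $E(x,y) = \sum_{n \geq 1} E_n(x) y^n$ and solving the resulting linear system over the partial-polynomial generating functions produces a rational expression $E(x,y) = \tilde A(x,y)/\tilde B(x,y)$. I would then factor $\tilde B(x,y)$, identify the dominant singularity $y = \rho(x)$ as a simple pole, and conclude that $E_n(x) \sim c(x)\,\rho(x)^{-n}$ uniformly for $x$ in closed subintervals of $(0,\infty)$. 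Applying \cite[Theorem~1]{Ben73} then gives asymptotic normality with mean $\mu(1)\,n$ and variance $\sigma^2(1)\,n$, where $\mu(x) = -x\,\frac{d}{dx}\ln\rho(x)$ and $\sigma^2(x) = x\,\mu'(x)$; evaluating at $x=1$ should yield $\mu(1) = 5/7$ and $\sigma^2(1) = 78/343$.

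The hard part will be the first step. Enumerating the ten partial Euler-genus states correctly and deriving their recurrences is bookkeeping-heavy and error-prone, and the non-orientable setting introduces subtleties (twisted edges, face merges across crosscaps) that are absent from the orientable genus computation of \Cref{thm:double-cycle-genus}. Once the rational generating function $E(x,y)$ is in hand, locating $\rho(x)$ and verifying it is a simple pole of minimal modulus, together with the application of Bender's theorem, proceeds exactly as in the genus case and is essentially mechanical.
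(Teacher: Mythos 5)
Your proposal matches the paper's proof essentially step for step: the paper likewise sets up ten partial Euler-genus polynomials for $P_n^2$ via face-tracing transfer rules (carried out in \Cref{apdx:double-cycle-euler-genus} with a $10\times 10$ transfer matrix), closes the cycle by adding a pair of parallel edges to get an order-$10$ linear recurrence for $E_n(x)$, forms the rational generating function $E(x,y)=A(x,y)/B(x,y)$, locates the dominant simple pole $y=r(x)$ satisfying $24x^2y^2+6xy+4y-1=0$, and applies \cite[Theorem~1]{Ben73} with $\mu(x)=-x\frac{d}{dx}\ln r(x)$ and $\sigma^2(x)=x\mu'(x)$, evaluated by computer algebra at $x=1$ to get $5/7$ and $78/343$. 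You also correctly identify where the real work lies, namely the bookkeeping-heavy derivation of the ten-state transfer system, which the paper handles by explicit face-tracing tables in the appendix.
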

\begin{proof}
  From \Cref{apdx:double-cycle-euler-genus}, we obtain the following recursion for the Euler-genus polynomial of $C_n^2$:
  \begin{equation}
    E_{n}(x) = \sum_{j=1}^{10} b_{j}(x) E_{n-j}(x), \label{eq:bx}
  \end{equation}
  where
  \begingroup
  \allowdisplaybreaks
  \begin{align*}
    b_{1}(x) &= 12 + 26 x, \\
    b_{2}(x) &= -(52 + 240 x + 160 x^2),\\
    b_{3}(x) &= 96 + 728 x + 768 x^2 - 816 x^3 \\
    b_{4}(x) &= -64 - 768 x - 208 x^2 +8640 x^3 + 8304 x^4, \\
    b_{5}(x) &= 128 x - 1920 x^2 - 21216 x^3 - 29376 x^4 + 16416 x^5, \\
    b_{6}(x) &= 512 x^2 + 9216 x^3 - 4992 x^4 - 165888 x^5 - 155520 x^6, \\
    b_{7}(x) &= 18432 x^4 + 179712 x^5 + 165888 x^6 - 359424 x^7, \\
    b_{8}(x) &= 239616 x^6 + 1327104 x^7 + 884736 x^8, \\
    b_{9}(x) &= 1327104 x^8 + 3317760 x^9, \\
    b_{10}(x) &= 2654208 x^{10},
  \end{align*}
  \endgroup
  with initial values
  \begingroup
  \allowdisplaybreaks
  \begin{align*}
    E_{1}(x) &= 4 + 10 x + 10 x^2 \\
    E_{2}(x) &= 6 + 36 x + 126 x^2 + 120 x^3 \\
    E_{3}(x) &= 8 + 84 x + 576 x^2 + 1444 x^3 + 1344 x^4 \\
    E_{4}(x) &= 16 + 208 x + 1944 x^2 + 8128 x^3 + 17960 x^4 + 13216 x^5 \\
    E_{5}(x) &= 32 + 512 x + 6304 x^2 + 35792 x^3 + 120224 x^4 + 208272 x^5 + 126528 x^6 \\
    \begin{split}
      E_{6}(x) &= 64 + 1216 x + 20160 x^2 + 145472 x^3 + 634528 x^4 + 1650112 x^5 \\
               &\quad + 2334112 x^6 + 1186304 x^7
    \end{split}
    \\
    \begin{split}
      E_{7}(x) &= 128 + 2816 x + 64768 x^2 + 573696 x^3 + 3042048 x^4 + 10201152 x^5 \\
               &\quad + 21506560 x^6 + 25230656 x^7 + 11041792 x^8
    \end{split}
    \\
    \begin{split}
      E_{8}(x) &= 256 + 6400 x + 213504 x^2 + 2261504 x^3 + 14003712 x^4 + 56356352 x^5 \\
               &\quad + 152367488 x^6 + 266558464 x^7 + 266050176 x^8 + 102145536 x^9
    \end{split}
    \\
    \begin{split}
      E_{9}(x) &= 512 + 14336 x + 730624 x^2 + 9050112 x^3 + 63676416 x^4  \\
               &\quad + 294905856 x^5 + 950924288 x^6 + 2133587200 x^7 + 3176284672 x^8 \\
               &\quad + 2748807424 x^9 + 941579264 x^{10}
    \end{split}
    \\
    \begin{split}
      E_{10}(x) &= 1024 + 31744 x + 2601984 x^2 + 36924416 x^3 + 289603584 x^4 \\
                &\quad + 1503739904 x^5 +  5549844480 x^6 + 14842849280 x^7 +
                  28366170624 x^8 \\
                &\quad + 36636175360 x^9 + 27954014720 x^{10} + 8652771328 x^{11}.
    \end{split}
  \end{align*}
  \endgroup
  Take $E(x,y)=\sum_{n\ge 1} E_n(x)y^n$. It follows from the recurrence above and initial values that $E(x,y) = \frac{A(x,y)}{B(x,y)}$ is a rational function, where
  \begingroup
  \allowdisplaybreaks
  \begin{align*}
    \begin{split}
      A(x,y) &= \sum_{j=1}^{10} E_j(x) y^j - \sum_{j=1}^9 \sum_{k=1}^{10-j} b_j(x) E_k(x)y^{j+k} \\
             &=(24x^2y^2 + 6xy + 4y - 1) (24x^2y^2 + 6xy + 2y - 1) A_1(x,y),
    \end{split}
    \\
    \begin{split}
      A_1(x,y) &= 1152 x^4 (6x^3 - 11x^2 - 2x - 1) y^6 + 32 x^2 (100x^4 - 276x^3 - 91x^2 - 36x - 9) y^5 \\
               &\quad + 8 (130x^5 - 153x^4 - 153x^3 - 49x^2 - 13x - 2) y^4 \\
               &\quad + 4 (74x^4 + 130x^3 + 27x^2 - 14x - 1) y^3 \\
               &\quad + 2 (10x^3 + 37x^2 + 40x + 9) y^2 - 2 (5x^2 + 5x + 2) y,
    \end{split}
    \\
    \begin{split}
      B(x,y) &= 1 - \sum_{j=1}^{10} b_j(x) y^j \\
             &= (1 + 2xy) (1 - 4xy) (24x^2y^2 + 6xy + 2y - 1)^2 (24x^2y^2 + 6xy + 4y - 1)^2.
    \end{split}
  \end{align*}
  \endgroup
  It follows that
  \begin{equation} \label{eq:reduced}
    E(x,y) = \frac{A_1(x,y)}{(1 + 2xy) (1 - 4xy) (24x^2y^2 + 6xy + 2y - 1) (24x^2y^2 + 6xy + 4y - 1)}.
  \end{equation}
  Thus, the dominant singularity of $E(x,y)$ is
  \[
    y = r(x) = \frac{\sqrt{33x^2 + 12x + 4} - 3x - 2}{24x^2},
  \]
  which satisfies  $24x^2y^2 + 6xy + 4y - 1 = 0$. We note again that $r(x)$ is a simple pole. Consequently, when $y \to r(x)$, uniformly for $x$ in any closed sub-interval of $(0, \infty)$ we have
  \begin{align*}
    E(x,y) &= \frac{A_1(x, r(x))}{(1 + 2xr(x)) (1 - 4xr(x)) (24x^2r(x)^2 + 6xr(x) + 2r(x) - 1)} \\
           &\quad \cdot \frac{1}{(48x^2r(x)+6x+4) (y-r(x))} + O\left((y-r(x))^{-2}\right).
  \end{align*}
  Using {\em Maple}, we can compute $\mu(x):=-\frac{xd}{dx}\ln r(x)$ and $\sigma^2(x) = x\mu'(x)$.  We find
  \[
    \mu(1) = \frac{5}{7},\quad \sigma^2(1) = \frac{78}{343}.
  \]
  We conclude by applying \cite[Theorem~1]{Ben73}.
\end{proof}

We note that \eqref{eq:reduced} implies the following recurrence which is of smaller order than that of \eqref{eq:bx}:
\[
  E_{n}(x) = \sum_{j=1}^{6} \beta_{j}(x) E_{n-j}(x),
\]
where
\begin{align*}
  &\beta_{1}(x) = 14x + 6, &&\beta_{2}(x) = -4(x^2 + 12x + 2), \\
  &\beta_{3}(x) = -8x (51x^2 + 15x - 2), &&\beta_{4}(x) = -32x^2 (3x^2 - 18x - 2), \\
  &\beta_{5}(x) = 1152x^4 (3x + 1), &&\beta_{6}(x) = 4608 x^{6}.
\end{align*}

The recursion above implies that there exist a production matrix of order $6$ for the double-edge cycle graph.

It should be noted that calculating the production matrix (transfer matrix) $M(x)$ for an $H$-linear family of graphs is a challenging task. In \cite{GKMT18}, Gross, Khan, Tucker and Mansour presented a computational method for determining $M(x)$. It has been observed that in all known cases, the production matrix $M(1)$ is primitive. Stahl \cite{Sta91a} raised the question of whether this is always true. By \Cref{prop:not-prim} in \Cref{apdx:double-cycle-euler-genus}, the stochastic matrix of an $H$-linear family of graphs may not be primitive, which appears to be the first instance in this field. However, the stochastic matrix of order $6$ associated to the Euler-genus distributions of double-edge cycle graphs is primitive. Now we present the two following questions.

\begin{pro} \label{prob:stochastic-matrix-primitive}
  Is there a stochastic matrix associated with an $H$-linear family of graphs that is primitive?
\end{pro}

If the answer is affirmative for this question, then we have the following natural question.

\begin{pro}
  Given an $H$-linear family of graphs, can we determine whether the associated minimum order stochastic matrix is primitive without computing it?
\end{pro}

\section{Genus generating functions of \texorpdfstring{$H$}{H}-linear and \texorpdfstring{$H$}{H}-circular graphs} \label{sec:h-families}

In this section, we generalize the computation performed in \Cref{sec:double-edge-cycle} on the more general case of \emph{$H$-linear} and \emph{$H$-circular} graphs. Under the framework of group algebra, we show that they always have rational generating functions for genus and Euler-genus. This framework also allows us to implement an algorithm that computes these generating functions automatically, serving as material for explicit computations for asymptotic normality of the associated embedding distributions as in \Cref{sec:double-edge-cycle}.

Given two graphs $H_1$ and $H_2$, with $U_1$ (resp. $U_2$) a non-empty subset of vertices in $H_1$ (resp. $H_2$), for any bijection $\varphi: U_1 \to U_2$, which is called a \tdef{gluing}, the \tdef{vertex-amalgamation} (or simply \tdef{amalgamation}) of $H_1$ and $H_2$ along $\varphi$, denoted by $H_1 \vertamal_\varphi H_2$ (or simply $H_1 \vertamal H_2$ when the gluing $\varphi$ is clear from the context), is obtained by identifying $u$ and $\varphi(u)$ for every $u \in U_1$ in the disjoint union of $H_1$ and $H_2$. It is clear that the amalgamation operation is associative. Similarly, we define the \tdef{self-amalgamation} of a graph $H$ along a gluing $\varphi: U_1 \to U_2$ with $U_1, U_2$ non-intersecting non-empty subsets of vertices in $H$.

Given a graph $H$ with a gluing $\varphi: U_1 \to U_2$ with $U_1, U_2$ non-intersecting non-empty subsets of vertices in $H$, we define a \tdef{$H$-linear family} $\graphseq(H, \varphi) = (G_n)_{n \geq 1}$ inductively by taking $G_1 = H$ and $G_{n+1} = G_n \vertamal H$ along the gluing that sends the vertices originally in $U_1$ that are not yet identified under amalgamation to the vertices originally in $U_2$ in the new copy of $H$. See \Cref{fig:h-linear-circular} for an example of an $H$-linear family. As another example, the double-edge path graphs $(P_n^2)_{n \geq 1}$ is a $P_1^2$-linear family, glued along the two vertices in $P_1^2$. Given an $H$-linear family $\graphseq = (G_n)_{n \geq 1}$ constructed over some gluing $\varphi: U_1 \to U_2$, we define a \tdef{$H$-circular family} $\graphseq^\circ(H, \varphi) = (G_n^\circ)_{n \geq 1}$ by taking $G_n^\circ$ to be the self-amalgamation of $G_n$ along the gluing that sends the vertices originally in $U_1$ that are not yet identified under amalgamation (thus from the last added copy of $H$) to those originally in $U_2$ again not yet identified (thus from the first copy of $H$). See \Cref{fig:h-linear-circular} for an example of an $H$-circular family. For another example, the family of double-edge cycle graphs $(C_n^2)_{n \geq 1}$ is a $P_1^2$-circular family, again glued along the two vertices in $P_1^2$.

\begin{figure}
  \centering
  \includegraphics[width=0.8\textwidth]{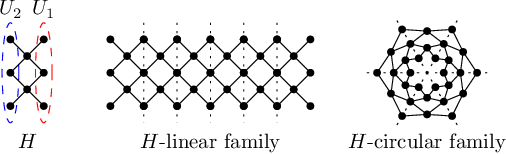}
  \caption{Example of $H$-linear and $H$-circular graphs.}
  \label{fig:h-linear-circular}
\end{figure}

Given a vertex $u$ in a graph $H$, we define the \tdef{blow-up} of $H$ at $u$, denoted by $H_{*u}$, to be the graph obtained by removing $u$ and add a distinct endpoint for each of its edges. Note that the blow-up of a graph is not necessarily connected. By abuse of notation, we extend the definition of blow-ups to a subset $U$ of vertices in $H$ by performing the same operation on every vertex in $U$ simultaneously, and we denote by $H_{*U}$ the graph obtained. We note that the blow-up $H_{*U}$ is not necessarily connected. For a further abuse of notation, given a graph $H$ and a self-gluing $\varphi: U_1 \to U_2$, we say that the blow-up of $H$ along $\varphi$, denoted by $H_{*\varphi}$, to be the blow-up $H_{*(U_1 \cup U_2)}$. In this case, we denote by $D_\varphi$ the darts associated to the newly added vertices in $H_{*\varphi}$.

We now consider rotation systems of a graph and its blow-up. Recall that $\rotsys(G)$ is the set of rotation systems of a graph $G$.

\begin{lemma} \label{lem:blow-up-genus}
  Let $H$ be a connected graph, $U$ a subset of vertices of $H$, and $D_U$ the darts of new vertices in the blow-up $H_{*U}$. Let $\mathcal{C}_U$ be the set of permutations in which each cycle is formed by all the darts of some $u \in U$, while each vertex $u \in U$ has its corresponding cycle. Then there is a bijection between $\rotsys(H)$ and $\mathcal{C}_U \times \rotsys(H_{*U})$, which sends $(\sigma, \tau) \in \rotsys(H)$ to $\pi, (\sigma', \tau) \in \mathcal{C}_U \times \rotsys(H_{*U})$ such that $\sigma = \pi \sigma'$.
\end{lemma}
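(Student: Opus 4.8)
The plan is to exhibit explicit maps in both directions and verify that they are mutually inverse. The crucial first observation is that the blow-up operation changes only the incidence of darts to vertices, not the darts themselves: an edge $e$ incident to $u \in U$ in $H$ becomes an edge incident to a fresh degree-$1$ vertex in $H_{*U}$, so the dart $(u,e)$ of $H$ is canonically identified with a dart of $H_{*U}$, and this identification is a bijection of dart sets that preserves the edge-involution $\tau$. In particular the same $\tau$ serves as the second coordinate of a rotation system for both $H$ and $H_{*U}$, which is exactly why $\tau$ is untouched by the claimed bijection. Under this identification, $D_U$ is precisely the set of darts of $H$ incident to some $u \in U$, and these are exactly the darts that become fixed points of $\sigma'$ in every element of $\rotsys(H_{*U})$, since each new vertex has degree $1$.

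Given $(\sigma,\tau)\in\rotsys(H)$, I would decompose $\sigma$ according to vertex type. Write $\pi$ for the permutation obtained from $\sigma$ by keeping only the cycles seated at vertices of $U$ and fixing every other dart, and write $\sigma'$ for the complementary permutation keeping only the cycles at vertices outside $U$ and fixing every dart of $D_U$. Because $U$-darts and non-$U$-darts are disjoint, $\pi$ and $\sigma'$ have disjoint supports, hence commute, and $\sigma=\pi\sigma'$. By construction $\pi$ has exactly one cycle per $u\in U$, consisting of all darts at $u$, so $\pi\in\mathcal{C}_U$; and $\sigma'$ carries the non-$U$ vertex cycles of $H$ (unchanged in $H_{*U}$) together with a fixed point for each degree-$1$ vertex of $H_{*U}$, so $(\sigma',\tau)\in\rotsys(H_{*U})$. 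This defines the forward map $(\sigma,\tau)\mapsto(\pi,(\sigma',\tau))$. For the inverse, given $(\pi,(\sigma',\tau))\in\mathcal{C}_U\times\rotsys(H_{*U})$, I would set $\sigma=\pi\sigma'$; since $\pi$ is supported on $D_U$ and $\sigma'$ fixes $D_U$ pointwise, the two commute and the cycles of $\pi\sigma'$ are exactly the cycles of $\pi$ (one per $u\in U$) together with those of $\sigma'$ at the remaining vertices, which are precisely the vertex-seated cycles required of a rotation system of $H$. Decomposing $\pi\sigma'$ along vertex type returns $\pi$ and $\sigma'$, so the two constructions undo one another.

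The computation here is light; the only point that genuinely requires care, and the place where the construction is really used, is the verification that every dart of $D_U$ is a fixed point of $\sigma'$. This rests on the fact that blow-up produces only degree-$1$ vertices at $U$, and it is what guarantees that $\pi$ and $\sigma'$ have disjoint supports so that the product $\pi\sigma'$ cleanly reassembles $\sigma$. A secondary bookkeeping point is the behaviour of a loop at a vertex of $U$: such an edge contributes two darts at the same vertex of $H$, which land on two distinct new vertices of $H_{*U}$; one should check that the dart identification and $\tau$ remain consistent in this case, but this is immediate from the definition of blow-up and presents no real obstacle.
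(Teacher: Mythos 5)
Your proposal is correct and takes essentially the same route as the paper's proof: decompose $\sigma = \pi\sigma'$ into the cycles seated at vertices of $U$ (giving $\pi \in \mathcal{C}_U$) and the complementary cycles (giving $\sigma'$, which fixes every dart of $D_U$ since the blown-up vertices have degree $1$), then invert by multiplying back. Your write-up is in fact more careful than the paper's --- it makes explicit the dart identification, the disjointness of supports, the mutual-inverse check, and the loop case --- but the underlying bijection is identical.
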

\begin{proof}
  Let $(\sigma, \tau) \in \rotsys(H)$. We can write $\sigma = \pi \sigma'$ such that $\pi$ contains the cycles corresponding to vertices in $U$. It is clear that $\pi \in \mathcal{C}_U$. Then, $(\sigma', \tau)$ is a rotation system of $H_{*U}$, as the removal of the cycles in $\pi$ from $\sigma$ is equivalent to making all darts related to vertices in $U$ a fixed point in $\sigma'$. Conversely, given $(\sigma', \tau) \in \rotsys(H)$ and $\pi \in \mathcal{C}_U$, by the definition of $H_{*U}$, all darts in $D_U$ are fixed points in $\sigma'$, thus $(\pi, \sigma, \tau)$ is a rotation system of $H_{*U}$, with the cycles in $\pi$ giving the cyclic order of darts around each $u \in U$.
\end{proof}

To use \Cref{lem:blow-up-genus} to compute embedding distributions of $H$-linear and $H$-circular families, we need some algebraic setup. Given a finite set $E$, we denote by $S_E$ the symmetric group over elements of $E$. We note that the group structure of $S_E$ depends only on the size of $E$. A \tdef{group ring} of $S_E$ over the ring $R$, denoted by $R[S_E]$, is the $R$-module with all elements in $S_E$ as a formal basis, endowed with the multiplication given by the linear extension of the group law of $S_E$. In the following, we will mostly be working with the ring $R = \rational[x, x^{-1}]$ of Laurent polynomials in $x$. Given $E' \subseteq E$ and $\sigma \in S_E$, the \tdef{projection} of $\sigma$ to $S_{E'}$, denoted by $\proj_{E'}^{E}(\sigma)$, is the permutation $\sigma'$ obtained from the restriction of the cyclic notation of $\sigma$ to elements in $E'$. For instance, for $E' = \{1, 2, 3\}$ and $E = \{1, 2, 3, 4, 5\}$, for $\sigma = (1, 2, 5)(3, 4)$, we have $\proj_{E'}^E(\sigma) = (1, 2)(3)$. With abuse of notation, we also consider $\proj_{E'}^E$ as a function from $R[S_E]$ to $R[S_{E'}]$ by extending it linearly. For a graph $G$, a \tdef{partial rotation system} is the projection of a rotation system on $G$ to a subset of darts. Such systems were first proposed by Gross, Mansour, Tucker and Wang in \cite{gross-mansour-tucker-wang-local-log-concavity} to study a local variant of the conjecture on log-concavity of genus distribution in \cite{GroRobTuc89}, and this variant was affirmed by Féray in \cite{feray-local-log-concave} by connecting it to a result of Stanley \cite{stanley-cycle-product}.

For a graph $H$ with $D$ the set of its darts, we define $\faceelem(H) \in \rational[x, x^{-1}] [S_D]$ to be the formal sum of the permutation representing faces in every rotation system of $H$. Formally,
\[
  \faceelem(H) = \sum_{(\sigma, \tau) \in \rotsys(H)} \sigma \tau.
\]
We note that $\faceelem(H)$ is well-defined even when $H$ is not connected. Given $\faceelem(H)$, we can compute the genus polynomial of $H$. For $\sigma \in S_E$, we define $\cyc(\sigma)$ to be the number of cycles in $\sigma$. Using this formalism, we can restate \Cref{lem:blow-up-genus} as follows.

\begin{coro} \label{coro:blow-up-genus}
  For $H$ a connected graph, $U$ a subset of vertices in $H$, and $D_U$ the set of new darts in the blow-up $H_{*U}$, we have
  \[
    \faceelem(H) = \mathbf{C}_U \faceelem(H_{*U}),
  \]
  where $\mathbf{C}_U$ is the formal sum of permutations of the form $\prod_{u \in U} \sigma_u$ where each $\sigma_u$ is a cyclic permutation of all darts containing the vertex $u$.
\end{coro}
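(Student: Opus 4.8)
The plan is to reformulate the bijection from \Cref{lem:blow-up-genus} at the level of the group-ring elements $\faceelem(H)$ and $\faceelem(H_{*U})$, using the key observation that the permutation $\sigma\tau$ tracking faces is multiplicatively compatible with the factorization $\sigma = \pi \sigma'$ established in the lemma. First I would recall the setup: by \Cref{lem:blow-up-genus}, every rotation system $(\sigma,\tau) \in \rotsys(H)$ decomposes uniquely as $\sigma = \pi\sigma'$ with $\pi \in \mathcal{C}_U$ and $(\sigma',\tau) \in \rotsys(H_{*U})$, and this correspondence is a bijection onto $\mathcal{C}_U \times \rotsys(H_{*U})$. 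Crucially, $\tau$ is fixed throughout (it is determined by the edge structure, which the blow-up does not change), so it is the same permutation on both sides.

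The central computation is then to rewrite the defining sum for $\faceelem(H)$ by substituting this decomposition:
\begin{align*}
  \faceelem(H) &= \sum_{(\sigma,\tau) \in \rotsys(H)} \sigma\tau
  = \sum_{\pi \in \mathcal{C}_U} \sum_{(\sigma',\tau) \in \rotsys(H_{*U})} (\pi \sigma') \tau \\
  &= \left( \sum_{\pi \in \mathcal{C}_U} \pi \right) \left( \sum_{(\sigma',\tau) \in \rotsys(H_{*U})} \sigma' \tau \right)
  = \mathbf{C}_U \, \faceelem(H_{*U}),
\end{align*}
where the second equality is exactly the bijection of \Cref{lem:blow-up-genus}, and the third equality pulls the factor $\pi$ out of the inner sum and recognizes the two resulting group-ring sums as $\mathbf{C}_U$ and $\faceelem(H_{*U})$ respectively. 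Here I identify $\sum_{\pi \in \mathcal{C}_U} \pi$ with $\mathbf{C}_U$, which requires checking that an element of $\mathcal{C}_U$ — a permutation whose cycles are exactly the full dart-sets of the vertices in $U$ — is precisely a product $\prod_{u \in U} \sigma_u$ of cyclic permutations of the darts at each $u \in U$, matching the description of $\mathbf{C}_U$ in the statement.

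The step requiring the most care is justifying that multiplication in the group ring commutes with the substitution, i.e.\ that $(\pi\sigma')\tau = \pi(\sigma'\tau)$ as elements of $S_D$ before summing. This is just associativity of the group law, but I must be careful that the darts of $H$ and of $H_{*U}$ are canonically identified (the blow-up only splits vertices and does not create or destroy darts), so that $\pi$, $\sigma'$, and $\tau$ all live in the same symmetric group $S_D$ and the products make sense on the nose. The remaining subtlety is that the lemma's bijection is stated over $\rotsys$, so I should note explicitly that the coefficient-$1$ formal sums are preserved: each $(\sigma,\tau)$ on the left corresponds to exactly one pair $(\pi, (\sigma',\tau))$ on the right with no multiplicities introduced, which is guaranteed by the bijectivity asserted in \Cref{lem:blow-up-genus}. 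Once these identifications are in place, the corollary follows immediately by linearity, so I expect the proof to be short and the main obstacle to be purely bookkeeping about dart identifications rather than any substantive difficulty.
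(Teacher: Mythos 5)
Your proposal is correct and takes essentially the same route as the paper: the paper's own proof is a one-sentence remark that the corollary is "a consequence of \cref{lem:blow-up-genus} and the definition of $\faceelem$," which is exactly the substitute-and-factor computation you write out explicitly. The extra care you take (canonical identification of darts, $\tau$ being unchanged by the blow-up, and $\sum_{\pi \in \mathcal{C}_U} \pi = \mathbf{C}_U$) is precisely the bookkeeping the paper leaves implicit.
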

\begin{proof}
  This is a consequence of \Cref{lem:blow-up-genus} and the definition of $\faceelem$.
\end{proof}

We have the following result to compute $\faceelem(H_1 \vertamal_\varphi H_2)$.

\begin{prop} \label{prop:amalgamation-faces}
  Let $H^{(1)}, H^{(2)}$ be two connected graphs, and $\varphi: U_1 \to U_2$ a gluing. We denote by $D_1, D_2$ the set of all darts in $H^{(1)}, H^{(2)}$ respectively. Let $H = H^{(1)} \vertamal_\varphi H^{(2)}$, and $U$ the set of vertices in $H$ from identification of vertices in $U_1$ and $U_2$. We take $D$ to be the disjoint union of $D_1$ and $D_2$ and identify elements in $D$ with darts in $H$. We have
  \[
    \faceelem(H) = \mathbf{C}_U \faceelem(H^{(1)}_{*U_1}) \faceelem(H^{(2)}_{*U_2}),
  \]
  where $\mathbf{C}_U$ is the formal sum of all permutations $\prod_{u \in U} \sigma_u$ where each $\sigma_u$ is a cyclic permutation of all darts of the vertex $u$ in $H$. Here, the multiplication should be considered to take place in $\rational[x, x^{-1}] [S_D]$.
\end{prop}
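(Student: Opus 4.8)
The plan is to generalize the blow-up bijection of \Cref{lem:blow-up-genus} (as restated in \Cref{coro:blow-up-genus}) from a single graph to an amalgamation of two graphs. The key structural observation is that in the amalgamation $H = H^{(1)} \vertamal_\varphi H^{(2)}$, the darts split as $D = D_1 \sqcup D_2$, with darts in $D_1$ coming from $H^{(1)}$ and those in $D_2$ from $H^{(2)}$. A rotation system $(\sigma, \tau)$ of $H$ is determined by the cyclic orders around each vertex; for vertices \emph{not} in $U$, these orders involve only darts from a single $H^{(i)}$, whereas for vertices in $U$, the cyclic order mixes darts from both $D_1$ and $D_2$ at the identified vertex. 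The crucial point is that $\tau$ (which pairs darts of the same edge) never crosses between $H^{(1)}$ and $H^{(2)}$, since no new edges are added by amalgamation; hence $\tau$ decomposes as $\tau_1 \tau_2$ over the two dart sets.

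First I would set up the bijection at the level of rotation systems, mirroring the proof of \Cref{lem:blow-up-genus}. Given $(\sigma, \tau) \in \rotsys(H)$, I write $\sigma = \pi \sigma'_1 \sigma'_2$, where $\pi = \prod_{u \in U} \sigma_u \in \mathbf{C}_U$ collects the cyclic orders around the amalgamated vertices, and $\sigma'_i$ consists of the cyclic orders around the non-amalgamated vertices of $H^{(i)}$, with all darts from $U_i$ becoming fixed points. The decomposition is well-defined because the three factors act on disjoint supports (the darts of $U$ are precisely those touched by $\pi$, and $\sigma'_1, \sigma'_2$ fix them). Conversely, given $\pi \in \mathbf{C}_U$ together with rotation systems of the blow-ups $H^{(1)}_{*U_1}$ and $H^{(2)}_{*U_2}$, one reconstructs $\sigma$ by multiplication, exactly as in \Cref{lem:blow-up-genus}. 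The point is that applying \Cref{lem:blow-up-genus} to $H$ with blow-up set $U$ reduces $\rotsys(H)$ to $\mathbf{C}_U \times \rotsys(H_{*U})$, and $H_{*U}$ is precisely the disjoint union $H^{(1)}_{*U_1} \sqcup H^{(2)}_{*U_2}$.

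Next I would translate this bijection into the group-ring identity. Since $\faceelem(H) = \sum_{(\sigma, \tau)} \sigma\tau$, and the face permutation factorizes compatibly with the disjoint supports, summing over the bijection yields
\[
  \faceelem(H) = \mathbf{C}_U \, \faceelem(H_{*U}) = \mathbf{C}_U \, \faceelem\left(H^{(1)}_{*U_1} \sqcup H^{(2)}_{*U_2}\right).
\]
The final step is to identify $\faceelem(H^{(1)}_{*U_1} \sqcup H^{(2)}_{*U_2})$ with the product $\faceelem(H^{(1)}_{*U_1}) \faceelem(H^{(2)}_{*U_2})$ in $\rational[x, x^{-1}][S_D]$. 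This holds because the darts of the two blow-ups are disjoint, so a rotation system of the disjoint union is just a pair of independent rotation systems, and the corresponding face permutations commute and have disjoint supports; the sum over pairs factors as the product of the two formal sums. Combining the last two displays gives the claim.

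\textbf{The main obstacle} is verifying that $\faceelem$ of a disjoint union equals the product of the individual $\faceelem$'s \emph{as weighted by the genus-tracking variable} $x$. One must check that the number of faces (and hence the exponent of $x$, coming from $\cyc$) is additive over the disjoint union, which is immediate since faces of a disjoint union are exactly the faces of the two pieces; and that $\mathbf{C}_U$ correctly reinserts the mixing of darts at the amalgamated vertices so that cycles of $\sigma\tau$ bridging $D_1$ and $D_2$ are generated only through the factors in $\pi$. The subtlety is purely bookkeeping: one needs the multiplication order in $\sigma = \pi\sigma'_1\sigma'_2$ to match the convention $\faceelem(H) = \mathbf{C}_U\faceelem(H_{*U})$ from \Cref{coro:blow-up-genus}, and to confirm that identifying $D$ with the darts of $H$ (rather than keeping them separate) does not disturb the group-ring product. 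Once the disjointness of supports is exploited, the argument reduces to \Cref{coro:blow-up-genus} applied verbatim to $H$ with blow-up vertex set $U$.
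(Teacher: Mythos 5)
Your proposal is correct and takes essentially the same approach as the paper: apply \Cref{coro:blow-up-genus} to $H$ with blow-up set $U$, then observe that $H_{*U}$ is the disjoint union of $H^{(1)}_{*U_1}$ and $H^{(2)}_{*U_2}$, whose face elements multiply in $\rational[x,x^{-1}][S_D]$ because their dart supports are disjoint. The only small inaccuracy is your stated ``main obstacle'' concerning the variable $x$: by the paper's definition $\faceelem$ carries no $x$-weights at all (the genus weighting enters only later through $\faceproj$), so the disjoint-union factorization is pure permutation bookkeeping, exactly as you in fact conclude.
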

\begin{proof}
  By \Cref{coro:blow-up-genus}, we have $\faceelem(H) = \mathbf{C}_U \faceelem(H_{*U}) = \mathbf{C}_U \faceelem(H^{(1)}_{*U_1}) \faceelem(H^{(2)}_{*U_2})$. The second equality comes from the fact that $H_{*U}$ is the disjoint union of $H^{(1)}_{*U_1}$ and $H^{(2)}_{*U_2}$.
\end{proof}

To compute the genus polynomial $\Gamma_{H}(x)$, given a set $E$ and a subset $E' \subseteq E$ we define the \tdef{face projection} $\faceproj_{E'}^E: \rational [x, x^{-1}] [S_E] \to \rational [x, x^{-1}] [S_{E'}]$, which is a linear map, by taking $\faceproj_{E'}^E(\sigma) = x^{\cyc_{E \setminus E'}(\sigma)} \proj_{E'}^E(\sigma)$, where $\cyc_{E \setminus E'}(\sigma)$ is the number of cycles of $\sigma$ that do not contain any element in $E'$, and extending it linearly. We note that, for $E'' \subseteq E' \subseteq E$, we have $\faceproj_{E''}^{E} = \faceproj_{E''}^{E'} \circ \faceproj_{E'}^{E}$. The following indicates that we only need to consider $\faceelem(H)$ to compute $\Gamma_H(x)$.

\begin{prop} \label{prop:genus-from-face-element}
  Let $H$ be a connected graph with $v_H$ vertices and $e_H$ edges, and $E$ the set of all its darts. We have $\Gamma_{H}(x) = x^{1 + (e_H - v_H) / 2} (\faceproj_{\varnothing}^E \faceelem(H))(x^{-1/2})$.
\end{prop}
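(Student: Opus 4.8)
The plan is to unwind the definition of the face projection $\faceproj_\varnothing^E$ applied to $\faceelem(H)$, recognise the result as a face-counting polynomial, and then translate faces into genus via Euler's formula. First I would note that when $E' = \varnothing$ the projection $\proj_\varnothing^E$ sends every permutation to the unique (empty) element of $S_\varnothing$, while $\cyc_{E \setminus \varnothing}(\sigma) = \cyc(\sigma)$ counts \emph{all} cycles of $\sigma$, since the condition ``not containing any element of $\varnothing$'' is vacuous. Consequently, for each rotation system $(\sigma, \tau) \in \rotsys(H)$ we have $\faceproj_\varnothing^E(\sigma\tau) = x^{\cyc(\sigma\tau)}$, and by linearity
\[
  \faceproj_\varnothing^E \faceelem(H) = \sum_{(\sigma,\tau) \in \rotsys(H)} x^{\cyc(\sigma\tau)}.
\]

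Next I would invoke the fact, recalled in \Cref{sec:prelim}, that the cycles of $\sigma\tau$ are exactly the faces of the orientable embedding $M$ encoded by $(\sigma, \tau)$; thus $\cyc(\sigma\tau) = f(M)$ and the displayed sum is precisely the face-enumerating polynomial $P(x) := \sum_M x^{f(M)}$, the sum ranging over all orientable cellular embeddings $M$ of $H$. Because every such $M$ is cellular, $f(M) \geq 1$, so $P$ is an honest polynomial with non-negative integer coefficients.

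Finally I would pass from faces to genus through Euler's formula. An embedding $M$ lies on $O_g$ precisely when $v_H - e_H + f(M) = 2 - 2g$, i.e.\ $g = 1 + (e_H - v_H)/2 - f(M)/2$. Substituting this into $\Gamma_H(x) = \sum_M x^{g(M)}$ and factoring out the exponent that does not depend on $M$ gives
\[
  \Gamma_H(x) = \sum_M x^{\,1 + (e_H - v_H)/2 - f(M)/2} = x^{\,1 + (e_H - v_H)/2} \sum_M \left(x^{-1/2}\right)^{f(M)} = x^{\,1 + (e_H - v_H)/2}\, P\!\left(x^{-1/2}\right),
\]
which is the asserted identity once $P$ is identified with $\faceproj_\varnothing^E \faceelem(H)$. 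The argument has no genuinely hard step: its entire content is the translation dictionary above, so the only point requiring care is the bookkeeping of the half-integer exponents and the verification that $\proj_\varnothing^E$ produces the face count $f(M)$ rather than some other cycle statistic. Correctness rests on two facts already established earlier — that rotation systems model exactly orientable cellular embeddings, and that the cycles of $\sigma\tau$ correspond bijectively to faces — so no new combinatorial input is needed.
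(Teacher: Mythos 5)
Your proof is correct and follows exactly the route the paper intends: the paper's own proof is a one-line remark that the statement ``comes directly from Euler's formula and the definitions of $\faceelem$ and $\faceproj_{E'}^E$,'' and your argument is precisely the careful expansion of that remark (identifying $\faceproj_\varnothing^E\faceelem(H)$ with the face-enumerating polynomial and converting faces to genus via $g = 1 + (e_H - v_H)/2 - f(M)/2$). No discrepancies to report.
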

\begin{proof}
  This comes directly from Euler's formula and the definitions of $\faceelem$ and $\faceproj_{E'}^E$.
\end{proof}

To simplify computation, in the following we often work with projection of rotation systems to a smaller set of darts.

\begin{prop} \label{prop:prod-restriction}
  For a non-empty set $E$ and a non-empty subset $E' \subseteq E$, let $\sigma \in \rational [x, x^{-1}] [S_{E'}]$. Then for any $\pi \in \rational [x, x^{-1}] [S_E]$, we have
  \[
    \faceproj_{E'}^E(\sigma \pi) = \sigma \faceproj_{E'}^E(\pi).
  \]
\end{prop}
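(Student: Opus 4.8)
The plan is to reduce to single group elements by bilinearity, and then split the claimed identity into an ``exponent part'' and a ``permutation part''. Since the map $\faceproj_{E'}^E$ is $\rational[x,x^{-1}]$-linear, left-multiplication by a fixed element is $\rational[x,x^{-1}]$-linear, and the product in the group ring is $\rational[x,x^{-1}]$-bilinear, both sides of the asserted equality are bilinear in the pair $(\sigma,\pi)$. I would therefore reduce to the case where $\sigma = g$ is a single permutation in $S_{E'}$, regarded via the natural inclusion $S_{E'} \hookrightarrow S_E$ as an element of $S_E$ fixing every element of $E\setminus E'$ (which is how the product $\sigma\pi$ is interpreted), and $\pi = h$ is a single permutation in $S_E$. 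For such basis elements, unwinding the definition $\faceproj_{E'}^E(\rho) = x^{\cyc_{E\setminus E'}(\rho)}\proj_{E'}^E(\rho)$ turns the goal into the conjunction of the two statements
\[
  \cyc_{E\setminus E'}(gh) = \cyc_{E\setminus E'}(h), \qquad \proj_{E'}^E(gh) = g\,\proj_{E'}^E(h),
\]
the first controlling the power of $x$, and the second the underlying permutation of $E'$.

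For the exponent identity, the key point is that $g$ fixes $E\setminus E'$ pointwise, so $(gh)(y) = h(y)$ whenever $h(y)\in E\setminus E'$. Consequently, a subset $S\subseteq E\setminus E'$ is a cycle of $h$ contained in $E\setminus E'$ if and only if it is such a cycle of $gh$: in either direction one checks that along such a cycle the images never enter $E'$, so $g$ acts trivially and $gh$ agrees with $h$ there. This produces a bijection between the cycles of $h$ and those of $gh$ contained in $E\setminus E'$, which gives the first identity.

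For the permutation identity, I would use the first-return description of the projection: for $e\in E'$ and any $\rho\in S_E$, we have $\proj_{E'}^E(\rho)(e) = \rho^{m}(e)$, where $m\geq 1$ is minimal with $\rho^m(e)\in E'$. Tracing the $gh$-orbit of a point $e\in E'$, the fact that $g$ fixes $E\setminus E'$ means that as long as the $h$-iterates of $e$ remain in $E\setminus E'$ the two maps $gh$ and $h$ coincide; only at the first instant the $h$-orbit returns to $E'$ does $g$ act, post-composing the returned value by $g$. Hence the first $gh$-return of $e$ to $E'$ equals $g$ applied to the first $h$-return, that is $\proj_{E'}^E(gh)(e) = g\bigl(\proj_{E'}^E(h)(e)\bigr)$, with the boundary case $h(e)\in E'$ (immediate return) checked directly.

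The main obstacle is the careful bookkeeping in this first-return argument: one must verify that left-multiplication by $g$ leaves the entire excursion of the orbit through $E\setminus E'$ unchanged and alters only its terminal value in $E'$, rather than reshuffling intermediate steps. Once this is pinned down, combining the two identities and re-expanding by bilinearity yields the proposition.
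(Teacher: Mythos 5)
Your proposal is correct, and it shares the paper's overall decomposition: reduce by linearity to single permutations $g \in S_{E'}$, $h \in S_E$, then verify separately the exponent identity $\cyc_{E\setminus E'}(gh) = \cyc_{E\setminus E'}(h)$ and the permutation identity $\proj_{E'}^E(gh) = g\,\proj_{E'}^E(h)$. Where you diverge is in the mechanism for each sub-identity. For the cycle-count statement, the paper decomposes $g$ into transpositions $(a,b)$ with $a,b \in E'$ and invokes the classical merge/split behaviour of cycles under left multiplication by a transposition; since the affected cycles always meet $E'$, the cycles lying entirely in $E\setminus E'$ are untouched. Your argument avoids the transposition decomposition entirely and yields a slightly stronger conclusion: because $g$ fixes $E\setminus E'$ pointwise and maps $E'$ into $E'$, the cycles of $gh$ contained in $E\setminus E'$ are \emph{literally the same} cycles as those of $h$, not merely equinumerous. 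For the permutation identity, the paper simply asserts that multiplication by $\sigma$ ``clearly'' commutes with $\proj_{E'}^E$, whereas you supply the verification the paper omits, via the first-return description of the projection: $(gh)^j(e) = h^j(e)$ for $1 \le j < m$ and $(gh)^m(e) = g\bigl(h^m(e)\bigr)$, where $m$ is the first return time of the $h$-orbit of $e \in E'$ to $E'$, so the first $gh$-return is $g$ applied to the first $h$-return. Your route is thus more elementary in one place and more complete in the other; what the paper's transposition argument buys instead is reuse of a standard local mechanism (cycle merging and splitting) that recurs throughout topological graph theory when tracking how face counts change under elementary operations.
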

\begin{proof}
  By linearity, we only need to check the case where $\sigma \in S_{E'}$ and $\pi \in S_E$. We first consider the action of $\sigma$ on cycles of $\pi$. Let $\tau = (a, b)$ be a transposition in $S_{E'}$. If $a, b$ are in the same cycle $c$ in $\pi$, then in $\tau \pi$ the cycle $c$ is replaced by two cycles $c_a$ and $c_b$, with $c_a$ (resp. $c_b$) containing $a$ (resp. $b$). Otherwise, if $a, b$ are in different cycles $c_a, c_b$ respectively in $\pi$, then in $\tau \pi$ the two cycles $c_a, c_b$ are replaced by a cycle $c$ that contains both $a, b$. This observation shows that $\cyc_{E \setminus E'}(\tau \pi) = \cyc_{E \setminus E'}(\pi)$, as the cycles involved must contain some dart in $E'$. By decomposing $\sigma$ into transpositions, we have $\cyc_{E \setminus E'}(\sigma \pi) = \cyc_{E \setminus E'}(\pi)$. As $\sigma \in S_{E'}$, it is also clear that the multiplication by $\sigma$ commutes with $\proj_{E'}^E$. We then have
  \[
    \faceproj_{E'}^E(\sigma \pi) = x^{\cyc_{E \setminus E'}(\sigma \pi)} \proj_{E'}^E(\sigma \pi) = x^{\cyc_{E \setminus E'}(\pi)} \sigma \proj_{E'}^E(\pi) = \sigma \faceproj_{E'}^E(\pi). \qedhere
  \]
\end{proof}

\begin{thm} \label{thm:H-linear-rational}
  For an $H$-linear family $\graphseq = \graphseq(H, \varphi) = (G_n)_{n \geq 1}$ with the gluing $\varphi: U_1 \to U_2$, we define its generating function $F_{\graphseq}(t, x) = \sum_{n \geq 1} t^n \Gamma_{G_n}(x)$. Then, $F_{\graphseq}(t, x)$ is a rational fraction in $t, x$ with integer coefficients.  The same holds for the generating function $F_{\graphseq^\circ}(t, x) = \sum_{n \geq 1} t^n \Gamma_{G^\circ_n}(x)$ of an $H$-circular family $\graphseq^\circ = \graphseq^{\circ}(H, \varphi) = (G^\circ_n)_{n \geq 1}$.
\end{thm}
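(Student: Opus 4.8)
The plan is to realize the map $G_n \mapsto G_{n+1} = G_n \vertamal H$ as the action of a \emph{fixed} linear transfer operator on a finite-dimensional space, so that $(\Gamma_{G_n}(x))_{n \geq 1}$ satisfies a linear recurrence with coefficients in $\mathbb{Z}[x,x^{-1}]$ and $F_\graphseq(t,x)$ becomes a matrix resolvent, hence rational. The guiding principle is that consecutive copies of $H$ interact only through the darts at the gluing vertices, so one should only ever track the face element projected onto this fixed interface.

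Concretely, write $G_n$ as a chain of $n$ copies of $H$ amalgamated successively along $\varphi\colon U_1 \to U_2$, let $D_n$ be the darts of $G_n$, and let $W$ be the fixed set (of size $\sum_{u\in U_1}\deg u$) of darts incident to the free copy of $U_1$ in $G_n$. I would take as state the blown-up, projected face element $\Phi_n := \faceproj_W^{D_n}\!\bigl(\faceelem((G_n)_{*U_1})\bigr) \in \rational[x,x^{-1}][S_W]$. Using that blow-up commutes with amalgamation away from the gluing, \Cref{prop:amalgamation-faces} applied to $G_n \vertamal (H_{*U_1})$ gives $\faceelem((G_{n+1})_{*U_1}) = \mathbf{C}_U\, \faceelem(H_{*(U_1\cup U_2)})\, \faceelem((G_n)_{*U_1})$, where $\mathbf{C}_U$ and $\faceelem(H_{*(U_1\cup U_2)})$ are both supported on $E' := W \cup D_H'$ (with $D_H'$ the darts of the new copy). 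Projecting to the new interface $W'$ and invoking the composition rule $\faceproj_{W'}^{D_{n+1}} = \faceproj_{W'}^{E'}\circ\faceproj_{E'}^{D_{n+1}}$ together with \Cref{prop:prod-restriction} to slide the face projection past the left multiplication by $\mathbf{C}_U\faceelem(H_{*(U_1\cup U_2)})$, and noting $\faceproj_{E'}^{D_{n+1}}\faceelem((G_n)_{*U_1}) = \Phi_n$ because $E'\cap D_n = W$, I obtain the closed linear recursion
\[
\Phi_{n+1} \;=\; T(\Phi_n), \qquad T(\Phi) := \faceproj_{W'}^{E'}\!\bigl(\mathbf{C}_U\, \faceelem(H_{*(U_1\cup U_2)})\cdot \Phi\bigr),
\]
where $T$ is a fixed endomorphism of the finite-dimensional module $\rational[x,x^{-1}][S_W]$ (under the canonical identification $S_{W'}\cong S_W$) whose matrix has entries in $\mathbb{Z}[x,x^{-1}]$. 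The crucial point, and what I expect to be the main obstacle to make fully rigorous, is precisely this step: one must check that no information from the ``deep'' darts $D_n \setminus W$ is ever needed again — their cycle contributions are already recorded as powers of $x$ in $\Phi_n$ and are left untouched by multiplication by interface-supported permutations (this invariance of $\cyc_{E\setminus E'}$ is the heart of \Cref{prop:prod-restriction}), so the state $\Phi_n$ genuinely determines both $\Phi_{n+1}$ and, as shown next, $\Gamma_{G_n}$.

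To extract the genus polynomial I would use \Cref{coro:blow-up-genus} to write $\faceelem(G_n) = \mathbf{C}_W \faceelem((G_n)_{*U_1})$, so that $\faceproj_\varnothing^{D_n}\faceelem(G_n) = \faceproj_\varnothing^{W}\bigl(\mathbf{C}_W \Phi_n\bigr) =: L(\Phi_n)$ is a fixed linear functional of $\Phi_n$; \Cref{prop:genus-from-face-element} then gives $\Gamma_{G_n}(x) = x^{1+(e_{G_n}-v_{G_n})/2}\, L(\Phi_n)\!\left(x^{-1/2}\right)$, with $e_{G_n}, v_{G_n}$ affine in $n$. Since $\Phi_n = T^{n-1}\Phi_1$ with $\Phi_1 = \faceproj_W^{D_H}\faceelem(H_{*U_1})$ fixed, summing the resolvent yields $\sum_{n\geq 1} t^n L(\Phi_n) = L\bigl(t(I - tT)^{-1}\Phi_1\bigr)$, a rational fraction in $t$ whose denominator divides $\det(I - tT) \in \mathbb{Z}[x,x^{-1}][t]$. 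Absorbing the affine prefactor amounts to the substitution $t \mapsto t x^{c}$ and an overall power of $x$, and since each $\Gamma_{G_n}(x) \in \mathbb{Z}[x]$ is a genuine polynomial, the half-power substitution $x \mapsto x^{-1/2}$ leaves a fraction in $\rational(t,x)$ with integer coefficients, as claimed.

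For the $H$-circular family I would run the same machinery but track a two-sided state on $W_L \cup W_R$, where $W_L$ is the (now fixed) interface at the first copy and $W_R$ the interface at the last copy; the transfer operator acts only on the $W_R$ factor while carrying $W_L$ inertly, and the ring is closed by a fixed functional that left-multiplies by the formal sum of cyclic orders around the vertices identified by $\varphi$ (matching $W_R$ to $W_L$) and then projects to $\varnothing$. As this closing is again a fixed linear functional of $\Phi_n = T^{n-1}\Phi_1$, the generating function $F_{\graphseq^\circ}(t,x)$ is rational by the identical resolvent computation. The one extra care needed here is the self-amalgamation analogue of \Cref{prop:amalgamation-faces} for the final closure, which follows from \Cref{coro:blow-up-genus} applied to the two disjoint blown-up ends.
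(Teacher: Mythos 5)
Your proposal is correct and takes essentially the same route as the paper: the same group-algebra formalism (face elements projected onto the interface darts), the same key lemmas (\Cref{prop:amalgamation-faces} to split the amalgamation, \Cref{prop:prod-restriction} to slide the face projection past interface-supported factors, \Cref{coro:blow-up-genus} and \Cref{prop:genus-from-face-element} for extraction), and the same resolvent computation $(1-t\mathbf{T})^{-1}$ yielding rationality. The only organizational difference is that you use a one-sided state $\Phi_n$ on the free $U_1$-interface for the linear family and upgrade to a two-sided state only for the circular family, whereas the paper blows up both free ends from the start (its $\mathbf{A}_n$), so that the circular case is closed by a single modified operator $\mathbf{T}'_H$ realizing $G^\circ_n = G_{n-1} \vertamal_{\varphi'} H$ with a doubled gluing; your direct self-amalgamation closure, justified by \Cref{coro:blow-up-genus} applied to the merged vertices, is an equally valid way to close the ring.
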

\begin{proof}
  Let $V_n$ be the set of vertices of $G_n$ originally in $U_1 \cup U_2$ that are not yet identified, and $G'_n = (G_n)_{*V_n}$ (resp. $G''_n = (G_n)_{*U_2}$) be the blow-up of $G_n$ at $V_n$ (resp. nodes originally from $U_2$ but not yet identified). By the definition of $H$-linear families, we have $G'_n = G''_{n-1} \vertamal_\varphi H_{*H_1}$. We denote by $U'_n$ the set of vertices identified in the amalgamation process to obtain $G'_n$. By \Cref{prop:amalgamation-faces}, we have $\faceelem(G'_n) = \mathbf{C}_{U'_n} \faceelem(G'_{n-1}) \faceelem(H_{*\varphi}) = \mathbf{C}_{U'_n}  \faceelem(H_{*\varphi}) \faceelem(G'_{n-1})$, the latter equality coming from the fact that $\faceelem(G'_{n-1})$ and $\faceelem(H_{*\varphi})$ are on disjoint supports.

  We denote by $D(G'_n)$ the set of darts of $G'_n$, by $D_n$ (resp. $D'_n$) the set of darts of $V_n$ (resp. $U'_n$) in $G'_n$. We also denote by $D(H)$ the set of darts of $H$, and by $D^H_n$ the set of darts in the copy of $H$ to construct $G'_n$ that are originally related to vertices of $H$ that are in $U_1 \cup U_2$.

  We now show that $\mathbf{A}_n = \faceproj^{D(G'_n)}_{D_{n}}(\faceelem(G'_n)) \in \rational [x, x^{-1}] [S_{D_{n}}]$ satisfies a linear recurrence. For $n \geq 2$, we have
  \begin{align*}
    \mathbf{A}_n &= \faceproj^{D(G'_n)}_{D_{n}}(\faceelem(G'_n)) \\
                 &= \faceproj^{D_{n} \cup D'_n}_{D_{n}} (\faceproj^{D(G'_n)}_{D_{n} \cup D'_{n}} (\mathbf{C}_{U'_n} \faceelem(H_{*\varphi}) \faceelem(G'_{n-1}))) \\
                 &= \faceproj^{D_{n} \cup D'_n}_{D_{n}} \left(\mathbf{C}_{U'_n} \cdot \faceproj^{D(G'_{n-1}) \cup D(H)}_{D_{n-1} \cup D^H_n} (\faceelem(H_{*\varphi}) \faceelem(G'_{n-1}))\right) \\
                 &= \faceproj^{D_{n} \cup D'_{n}}_{D_{n}} \left(\mathbf{C}_{U'_n} \cdot \faceproj^{D(H)}_{D^H_{n}} (\faceelem(H_{*\varphi})) \cdot \faceproj^{D(G'_{n-1})}_{D_{n-1}}( \faceelem(G'_{n-1})) \right) \\
                 &= \faceproj^{D_{n} \cup D'_{n}}_{D_{n}} \left(\mathbf{C}_{U'_n} \cdot \faceproj^{D(H)}_{D^H_{n}} (\faceelem(H_{*\varphi})) \cdot \mathbf{A}_{n-1} \right)
  \end{align*}
  The second equality comes from the property of $\faceproj_{E'}^E$ that $\faceproj_{E''}^E = \faceproj_{E''}^{E'} \circ \faceproj_{E'}^E$ for $E'' \subseteq E' \subseteq E$, the third from \Cref{prop:prod-restriction} and from the fact that $D_n \cup D'_n = D_{n-1} \cup D^H_n$, with disjoint unions on both sides, the fourth from the fact that $\faceelem(H_{*\varphi})$ and $\faceelem(G'_{n-1})$ are in group rings of symmetric groups with disjoint support. This is clearly a linear recurrence, as all the operators involved are linear, and although technically $\mathbf{A}_n$ and $\mathbf{A}_{n-1}$ have $S_{D_n}$ and $S_{D_{n-1}}$ as the underlying group respectively, they are in clear bijection induced by the bijection between $D_n$ and $D_{n-1}$ identifying vertices with the same origin in $H$. For simplicity, we say that all $\mathbf{A}_n$ are in $\rational [x, x^{-1}] [S_{D_*}]$ for the same $D_*$ without invoking the bijections identifying $D_n$ and $D_{n-1}$. We can do the same for $D'_n$ as the identified vertices are related by the same gluing $\varphi$, thus replacing every $D'_n$ by $D'_*$ without invoking the bijection.

  We now define the linear operator
  \begin{align*}
    \mathbf{T}_H: \rational [x, x^{-1}] [S_{D_*}] &\to \rational [x, x^{-1}] [S_{D_*}], \\
    \sigma &\mapsto \faceproj_{D_*}^{D_* \cup D'_*} (\mathbf{C}_{U'_n} \cdot \faceproj^{D(H)}_{D^H_{n}} (\faceelem(H_{*\varphi})) \cdot \sigma).
  \end{align*}
  The computation above thus shows that $\mathbf{A}_n = \mathbf{T}_H \mathbf{A}_{n-1}$. Let $\mathbf{F}(t) = \sum_{n \geq 1} t^n \mathbf{A}_n$, which is an element in $\rational [x, x^{-1}] [[t]] [S_{D_*}]$. As $G'_1 = H_{*U_2}$, by \Cref{coro:blow-up-genus} we have $\mathbf{A}_1 = \faceproj_{D_*}^{D(H)}(\mathbf{C}_{U_1} \faceelem(H_{*\varphi}))$. Hence,
  \[
    (1 - t \mathbf{T}_H) \mathbf{F}(t) = t \mathbf{A}_1.
  \]
  Hence, $\mathbf{F}(t)$ is governed by a linear system, meaning that all its coefficients $[\sigma]\mathbf{F}(t)$ are rational fractions in $t, x$ with a common denominator that divides $\det(1 - t \mathbf{T}_H)$.

  To obtain the generating function $F_\graphseq(t, x)$, we observe that by \Cref{coro:blow-up-genus}, we have $\faceelem(G_n) = \mathbf{C}_{U_1 \cup U_2} \mathbf{A}_n$, and we use \Cref{prop:genus-from-face-element} by observing that all operators involved are linear. We thus conclude that $F_\graphseq(t, x)$ is rational in $t, x^{1/2}$ with the denominator dividing $\det(1 - t \mathbf{T}_H)(x^{-1/2})$. However, as we see combinatorially that $F_\graphseq(t, x)$ is a series in $t$ with coefficients polynomial in $x$ and is analytic near $(t, x) = (0, 0)$, which implies that both the denominator and the numerator of $F_\graphseq(t, x)$ are polynomial in $x$.

  For $F_{\graphseq^\circ}$, we observe that $G^\circ_n = G_{n-1} \vertamal_{\varphi'} H$, with the gluing $\varphi'$ a ``double version'' of $\varphi$, sending vertices originally from $U_1$ not yet merged in $G_{n-1}$ (resp. $H$) to those from $U_2$ not yet merged in $H$ (resp. $G_{n-1}$). By \Cref{prop:amalgamation-faces}, with a computation similar to that for the recurrence on $\mathbf{A}_n$, we see that there is a linear operator $\mathbf{T}'_H$ such that $\Gamma_{G^\circ_n}(x) = \mathbf{T}'_H \mathbf{A}_{n-1}$. With the same arguments as in the case of $F_\graphseq(t, x)$, we know that $F_{\graphseq^\circ}(t, x)$ is also rational with the denominator a factor of $\det(1 - t \mathbf{T}_H)$.
\end{proof}

By replacing the symmetric group $S_D$ on the set of darts $D$ by the hyperoctahedral group $\mathcal{H}_D$ formed by sign-symmetric permutations over $\pm D = D \cup \{ \overline{x} \mid x \in D \}$, with adjustments on face counting and the element $\mathbf{C}_U$ in \Cref{coro:blow-up-genus,prop:amalgamation-faces}, we have the same result for Euler genus.

\begin{thm} \label{thm:H-linear-rational-euler}
  For an $H$-linear family $\graphseq = \graphseq(H, \varphi) = (G_n)_{n \geq 1}$ with the gluing $\varphi: U_1 \to U_2$, we define its generating function $F_{\graphseq}(t, x) = \sum_{n \geq 1} t^n \Eulergenus_{G_n}(x)$. Then, $F_{\graphseq}(t, x)$ is a rational fraction in $t, x$ with integer coefficients. The same holds for the generating function $F_{\graphseq^\circ}(t, x) = \sum_{n \geq 1} t^n \Eulergenus_{G^\circ_n}(x)$ of an $H$-circular family $\graphseq^\circ = \graphseq^{\circ}(H, \varphi) = (G^\circ_n)_{n \geq 1}$.
\end{thm}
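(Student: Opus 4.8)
The plan is to transport the whole argument of \Cref{thm:H-linear-rational} to the type-B setting, replacing the symmetric group $S_D$ by the hyperoctahedral group $\mathcal{H}_D$ of sign-symmetric permutations of $D \cup \overline{D}$, and working in $\rational[x,x^{-1}][\mathcal{H}_D]$ throughout. The point is that every algebraic ingredient of the orientable proof has a sign-symmetric counterpart, after which the final linear-recurrence argument is formally identical.

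First I would redefine the face element over signed rotation systems, $\faceelem(H) = \sum_{(\sigma,\tau) \in \signrotsys(H)} \sigma\tau \in \rational[x,x^{-1}][\mathcal{H}_D]$. The cycles of $\sigma\tau$ in $\mathcal{H}_D$ are permuted by the sign involution $c \mapsto \overline{c}$, and each face of the embedding corresponds to one orbit of this action (a reverse pair for an orientation-preserving face, a single self-paired cycle otherwise). I would therefore redefine the face projection $\faceproj_{E'}^{\pm E}$ so that its $x$-exponent counts sign-orbits of cycles avoiding $E' \cup \overline{E'}$ rather than raw cycles, and redefine $\cyc$ accordingly. With $f$ the number of such orbits (the number of faces), Euler's formula $\eulergenus = 2 - v + e - f$ gives a clean analogue of \Cref{prop:genus-from-face-element}, namely $\Eulergenus_H(x) = x^{2 + e_H - v_H}\,(\faceproj_{\varnothing}^{\pm E}\faceelem(H))(x^{-1})$. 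Note that, unlike the genus case, the exponents stay integral here, so the $x^{1/2}$ detour in the proof of \Cref{thm:H-linear-rational} is unnecessary.

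Next I would prove the signed blow-up lemma, the analogue of \Cref{lem:blow-up-genus} and \Cref{coro:blow-up-genus}. Here $\mathbf{C}_U$ becomes the formal sum of sign-symmetric permutations $\prod_{u \in U}\sigma_u$, where each $\sigma_u$ ranges over the sign-symmetric cyclic arrangements of the darts of $u$ and their formal opposites; this is exactly the type-B analogue of a rotation at a vertex, and it simultaneously encodes the cyclic order and the twisting of incident edges. The factorization $\sigma = \pi\sigma'$ and the disjoint-support observation carry over verbatim, yielding $\faceelem(H) = \mathbf{C}_U\faceelem(H_{*U})$ and then the amalgamation identity of \Cref{prop:amalgamation-faces} with no change. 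With these in hand, the core computation copies over: I would set $\mathbf{A}_n = \faceproj^{D(G'_n)}_{D_n}(\faceelem(G'_n)) \in \rational[x,x^{-1}][\mathcal{H}_{D_*}]$, derive $\mathbf{A}_n = \mathbf{T}_H\mathbf{A}_{n-1}$ from the signed versions of \Cref{prop:amalgamation-faces,prop:prod-restriction}, form $\mathbf{F}(t) = \sum_{n \geq 1} t^n\mathbf{A}_n$, and solve $(1 - t\mathbf{T}_H)\mathbf{F}(t) = t\mathbf{A}_1$, concluding rationality with denominator dividing $\det(1 - t\mathbf{T}_H)$; the circular family is handled by the same ``double gluing'' $\varphi'$ as before.

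The main obstacle I anticipate is the type-B version of \Cref{prop:prod-restriction}, the commutation $\faceproj_{E'}^{\pm E}(\sigma\pi) = \sigma\,\faceproj_{E'}^{\pm E}(\pi)$ for $\sigma \in \rational[x,x^{-1}][\mathcal{H}_{E'}]$. Its proof there reduces $\sigma$ to transpositions and checks that each either splits one cycle or merges two, leaving the count of cycles avoiding $E'$ unchanged. In $\mathcal{H}_{E'}$ the Coxeter generators come in two flavours, the plain transpositions $(a,b)(\overline a,\overline b)$ and the sign-twisted ones $(a,\overline b)(\overline a,b)$, together with the sign-change $(a,\overline a)$; I would have to verify the split/merge dichotomy, and hence the invariance of the number of sign-orbits of cycles meeting only $E \setminus E'$, for each generator type while keeping the pairing $c \mapsto \overline c$ consistent throughout. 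Once this commutation lemma and the corrected face-counting normalization are secured, the remainder of the proof is a faithful translation of the orientable case.
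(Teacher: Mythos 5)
Your proposal is correct and takes essentially the same approach as the paper, whose own proof is only a brief sketch: replace $S_D$ by the hyperoctahedral group, adjust the face projection so that a self-paired cycle counts as one face while a mirror pair counts as one face between them (your sign-orbit counting is exactly this rule), and adjust $\mathbf{C}_U$ to sign-symmetric vertex rotations. Your write-up is in fact more detailed than the paper's, notably in isolating the type-B analogue of \Cref{prop:prod-restriction} as the step needing verification; that verification does go through, since left-multiplication by an element of $\mathcal{H}_{E'}$ cannot alter the cycles avoiding $E' \cup \overline{E'}$ or their pairing under the sign involution.
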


The crucial point for the case of Euler-genus is the face projection $\faceproj^E_{E'}$. Instead of simply counting the number of cycles, it now needs to distinguish two types of cycles: those identical to their formal opposite, and those not. The former one should count as one face, and the latter only half, as its formal opposite is also a valid cycle describing the same face.

We note that \Cref{thm:H-linear-rational} and \Cref{thm:H-linear-rational-euler} is essentially a reformulation and a generalization of the transfer matrix approach in \cite{GKMT18,CG19}, but expressed using the formalism of partial rotation system and group algebra, which opens up a way to do the computations automatically without the manual and tedious ``face tracing'' to obtain the transfer matrix. We also note that both theorems are effective, and the parameter of complexity is the number of darts involved in the amalgamation. Hence, as long as there are relatively few such darts, the graph $H$ can be arbitrarily complex, in the limit of being able to compute $\faceelem(H_{*\varphi})$. An implementation for the case of genus distribution in Sagemath is available at \cite{genus-poly-h-family}.

Using \Cref{thm:H-linear-rational}, we computed the genus distribution of $(C^3_n)_{n \geq 1}$, where $C^3_n$ is the \tdef{triple-edge cycle graph}, obtained by replacing each edge in the cycle graph $C_n$ by a triple of edges. It is the $H$-circular family with $H$ the graph with two vertices linked by 3 edges. Let $F_{C^3}(t, x) = \sum_{n \geq 1} \Gamma_{C^3_n}(x) t^n$, we have
\begingroup
\allowdisplaybreaks
\begin{align*}
  F_{C^3}(t, x) &= \frac{8t A(t, x)}{B(t, x)} \\
  A(t, x) &= 4353564672000000 x^{13} t^{12} + 6530347008000000 x^{12} t^{12} + 1741425868800000 x^{12} t^{11} \\
                &\quad + 2176782336000000 x^{11} t^{12} + 4389844377600000 x^{11} t^{11} + 104001822720000 x^{11} t^{10}\\
                &\quad+ 798153523200000 x^{10} t^{11} + 1004947845120000 x^{10} t^{10} - 72559411200000 x^9 t^{11} \\
                &\quad - 45934138368000 x^{10} t^9 + 13907220480000 x^9 t^{10} + 77840123904000 x^9 t^9 \\
                &\quad - 33105231360000 x^8 t^{10} - 8270429184000 x^9 t^8 - 26086116096000 x^8 t^9 \\
                &\quad + 453496320000 x^7 t^{10} - 2348775014400 x^8 t^8 - 4237671168000 x^7 t^9 \\
                &\quad - 286738444800 x^8 t^7 - 2839894732800 x^7 t^8 + 292253184000 x^6 t^9 \\
                &\quad - 482184161280 x^7 t^7 + 69872025600 x^6 t^8 + 29358288000 x^7 t^6 + 77853000960 x^6 t^7 \\
                &\quad + 48876825600 x^5 t^8 + 3310009920 x^6 t^6 + 37246884480 x^5 t^7 - 503884800 x^4 t^8 \\
                &\quad + 1454500800 x^6 t^5 + 18235730880 x^5 t^6 + 1266710400 x^4 t^7 + 1357250256 x^5 t^5 \\
                &\quad - 302575824 x^4 t^6 - 151165440 x^3 t^7 - 67003200 x^5 t^4 - 84107160 x^4 t^5 \\
                &\quad - 186099120 x^3 t^6 - 25857144 x^4 t^4 - 161394768 x^3 t^5 - 3779136 x^2 t^6 - 1441584 x^4 t^3 \\
                &\quad - 39956652 x^3 t^4 + 2420280 x^2 t^5 - 1868292 x^3 t^3 + 2545344 x^2 t^4 - 23328 x t^5 \\
                &\quad + 102312 x^3 t^2 + 732510 x^2 t^3 + 159732 x t^4 + 87948 x^2 t^2 + 81432 x t^3 - 1740 x^2 t \\
                &\quad + 1233 x t^2 + 1134 t^3 - 1215 x t + 207 t^2 + 10 x - 105 t + 5 \\
  B(t, x) &= (6 x t + 1) (1 - 12 x t) (43200 x^3 t^3 + 2880 x^2 t^2 - 1080 x t^2 - 120 x t - 18 t + 1) \\
                &\quad \cdot (129600 x^4 t^4 + 21600 x^3 t^3 + 180 x^2 t^2 - 144 x t^2 - 60 x t + 1) \\
                &\quad \cdot (259200 x^4 t^4 + 60480 x^3 t^3 - 2160 x^2 t^3 + 2160 x^2 t^2 - 612 x t^2 - 114 x t - 6 t + 1)
\end{align*}
\endgroup
This result is obtained by first using the recurrence in \Cref{thm:H-linear-rational} to compute the first terms of the series $F_{C^3}(t, x)$, then use Padé's approximation to get the rational fraction expression, guaranteed by \Cref{thm:H-linear-rational}. The whole computation took less than one minute on an ordinary laptop. For larger examples, the genus generating function of the $H$-circular family in \Cref{fig:h-linear-circular} was computed on one core of a local compute server in about 3 days, and the highest power of $t$ (resp. $x$) of its denominator is $138$ (resp. $198$), while that for the quadruple-edge cycle graph $C^4_n$ was computed in about 7 days, with the denominator
\begingroup
\allowdisplaybreaks
\begin{align*}
  &(1 - 2304 x^3 t^2) (16128 x^3 t^2 - 96 x t - 1) (16128 x^3 t^2 - 72 x t - 1) (20736 x^3 t^2 - 1) \\
  &\cdot (2341011456 x^6 t^4 + 13934592 x^4 t^3 - (548352 x - 20736) x^2 t^2 - 288 x t + 1) \\
  &\cdot (245806202880 x^6 t^4 - (1430618112 x - 162570240) x^3 t^3 \\
  &\qquad - (7257600 x^2 - 80640 x + 80640) x t^2 - (3456 x + 96) t + 1) \\
  &\cdot (3964362440048640 x^9 t^6 - (48636854009856 x - 655483207680) x^6 t^5 \\
  &\qquad - (362856775680 x^2 - 130576416768 x + 10404495360) x^4 t^4 \\
  &\qquad + (2018193408 x^2 - 83220480 x + 1935360) x^2 t^3 \\
  &\qquad + (7273728 x^2 + 235008 x + 32256) x t^2 + (2328 x + 24) t - 1) \\ 
  &\cdot (5480334637123239936 x^{12} t^8 - 38057879424466944 x^{10} t^7 + \\
  &\qquad - (2871840485670912 x^2 - 27717575639040 x + 3745618329600) x^7 t^6 \\
  &\qquad + (20363678318592 x + 49941577728) x^6 t^5 \\
  &\qquad + (202558537728 x^2 - 37267734528 x + 2529460224) x^4 t^4 \\
  &\qquad - (793718784 x - 18137088) x^3 t^3 - (3299328 x^2 + 160704 x + 5184) x t^2 \\
  &\qquad - 912 x t + 1).
\end{align*}
\endgroup
We note that most of the computation time is spent on computing the needed first terms by iterating the operator $\mathbf{T}_H$.

After some further modification, our implementation of \cref{thm:H-linear-rational} can also be applied to some other families of graphs. The $m \times n$ grid graph is the Cartesian product of two paths, $P_m$ and $P_n$. Grid graphs form a natural and fundamental class of graphs in graph theory. A key result in graph theory is the Grid Minor Theorem by Robertson and Seymour \cite{RP86}. It states that there exists a function $f$ such that, for every positive integer $k$, $f(k)$ is the minimum integer ensuring that any graph with treewidth at least $f(k)$ contains a $k \times k$ grid as minor. In a personal communication \cite{Pri07}, Stahl inquired about the calculation of the genus polynomial for an $m \times n$ grid. In \cite{grid-genus}, Khan, Poshni and Gross were interested in the genus distribution of rectangular grids, and obtained a linear recurrence for the case of $3 \times n$ grid after a face-tracing argument with refined case analysis. While the same approach may be extended to $4 \times n$ grids, the needed case analysis will surely becomes extremely tedious and prone to err. An automatic approach is thus necessary.

Although not directly an $H$-linear family, the family of grids with fixed height is close to one, which motivates the following definition. Given an $H$-linear family $\graphseq = \graphseq(H, \varphi) = (G_n)_{n \geq 1}$ and another graph $H'$, we define a \tdef{capped $H$-linear} family $\graphseq' = (G'_n)_{n \geq 1}$ to be the graph sequence where $G'_n = G_n \vertamal_{\varphi'} H'$, where $\varphi'$ sends a non-empty subset of unmatched gluing vertices in $G_n$ to a subset of vertices in $H'$. It is clear that, for any fixed $k$, the family of $k \times n$ grids is a capped $H$-linear family. From the proof of \cref{thm:H-linear-rational}, we see that there is a linear operator $\mathbf{V}_{H'}$, induced by the vertex amalgamation by $H'$, that sends the face element of $G_n$ to that of $G'_n$ for all $n \geq 1$. It is thus clear that the genus generating function for $\graphseq'$ is also rational, and its denominator divides that of the corresponding $H$-linear family $\graphseq$. We may thus extend our implementation of \cref{thm:H-linear-rational} to capped $H$-linear families. With such an implementation, we obtain the following expression of the genus generating function $F_{3 \times n}$ for grids of size $3 \times n$:
\begin{align*}
    F_{3 \times n}(t, x) &= \frac{2tA(t, x)}{B(t, x)}, \\
    A(t, x) &= ((1728 x + 1728) x^4 t^3 - (864 x^2 + 1080 x + 72) x^2 t^2 - (252 x^2 + 126 x - 42) x t \\
    &\qquad + 18 x^2 + 29 x + 1), \\
    B(t, x) &= (1 - (30 x + 1) t + (168 x - 42) x t^2+ (1008 x + 72) x^2 t^3 - 1728 x^4 t^4).
\end{align*}
Its first terms agree with those given in \cite{grid-genus}, and the computation took a few seconds. We also computed the genus generating function $F_{4 \times n}$ for grids of size $4 \times n$, which took about 20 minutes. Its denominator is
\begingroup
\allowdisplaybreaks
\begin{align*}
  &\quad (4210380767232 x^2 - 14179076407296 x + 3281620303872) x^{17} t^{13} \\ 
  &+ (4045267795968 x^4 - 6209795653632 x^3 - 18664430469120 x^2 - 1415499743232 x \\
  &\qquad + 21284093952) x^{14} t^{12} \\
  &+ (17902216249344 x^4 + 20408316788736 x^3 + 2597405958144 x^2 - 171514920960 x^1 \\
  &\qquad - 8342839296) x^{12} t^{11} \\
  &- (1674324836352 x^5 + 2842878726144 x^4 + 760514946048 x^3 - 244951921152 x^2 \\
  &\qquad - 25993322496 x + 350148096) x^{10} t^{10} \\
  &- (1728919323648 x^5 + 352823751936 x^4 + 93791131776 x^3 + 18348738048 x^2 - 444531456 x \\
  &\qquad - 31045248) x^8 t^9 \\
  &+ (196671587328 x^6 + 73040693760 x^5 - 7330553568 x^4 - 531265920 x^3 - 40426944 x^2 \\\
  &\qquad - 37219776 x + 29376) x^6 t^8 \\
  &+ (21002582016 x^5 + 6971414832 x^4 + 959759928 x^3 + 45284456 x^2 + 836632 x - 81600) x^5 t^7 \\
  &- (2832558336 x^5 + 73546128 x^4 + 48384420 x^3 - 9095532 x^2 - 1460732 x - 50860) x^4 t^6 \\
  &- (112888368 x^4 + 28685604 x^3 + 1997650 x^2 + 57846 x + 3118) x^3 t^5 \\
  &+ (9431280 x^4 + 1241028 x^3 - 230871 x^2 - 25617 x - 552) x^2 t^4 \\
  &+ (189996 x^3 + 21832 x^2 + 1995 x + 42) x t^3 - (10080 x^2 - 1098 x + 7) x t^2 - (94 x + 1) t + 1.
\end{align*}
\endgroup
The related family of circular grids, which are obtained by identifying the first and the last column of vertices in a rectangular grid, is an $H$-circular family, whose genus generating function can also be computed automatically by our software, with the same denominator as the corresponding family of rectangular grids.

We also note that, both \Cref{thm:H-linear-rational} and \Cref{thm:H-linear-rational-euler} are under the framework laid down in \cite[Theorem~IX.9]{flajolet}. Therefore, for a particular $H$, to show that its related $H$-linear and $H$-circular families have asymptotically normal embedding genus distributions, we can compute it rational generating function and try to check conditions in \cite[Theorem~IX.9]{flajolet}. However, to show asymptotic normality for embedding genus in general seems elusive. First, for some $H$ with particular gluing, such as the case where $H$ is a tree and the gluing consists of only a pair of vertices, it is clear that both the $H$-linear and the $H$-circular families are planar, thus without asymptotic normality. Second, from \Cref{prob:stochastic-matrix-primitive} and the discussions above, we can see that the linear operator $\mathbf{T}_H$ defined in the proof of \Cref{thm:H-linear-rational} may not be irreducible (\textit{i.e.}, the related matrix is not primitive). This is also observed on the computation of the genus generating function of triple-edge cycle graphs above. Therefore, we may not apply \cite[Theorem~IX.10]{flajolet} directly, and conditions in \cite[Theorem~IX.9]{flajolet} may be difficult to check in a general fashion.

Nevertheless, we have the following conjecture of a sufficient condition for the genus distribution of $H$-linear and $H$-circular families to be asymptotically normal. Let $H$ be a graph with a self-gluing $\varphi : U_1 \to U_2$, and $K$ a minimal cut between $U_1$ and $U_2$, that is, a set of edges whose removal disconnects vertices in $U_1$ and those in $U_2$ and which is minimal by inclusion. A \tdef{swapping} $\swapping(H, \varphi, K)$ of $H$ with respect to $K$ is obtained as follows:
\begin{enumerate}
\item We cut each edge in $E$ into two, creating two new vertices, one in a connected component with some vertices in $U_1$, the other with $U_2$, and these are the only cases by the minimality of the cut. Let $N_1$ (resp. $N_2$) be the set of these new vertices connected to $U_1$ (resp. $U_2$). We note that there is a natural bijection $\psi : N_1, N_2$ that pairs the new vertices coming from the same edge.
\item We perform the gluing by identifying vertices in $U_1$ to those in $U_2$ according to $\varphi$. Let $H'$ be the final graph obtained, then $\swapping(H, \varphi, K)$ is the pair $(H', \psi)$.
\end{enumerate}

\begin{conj} \label{conj:H-family-normal-asympt}
  Let $H$ be a graph with a self-gluing $\varphi: U_1 \to U_2$. Suppose that there is a valid minimal cut $K$ such that $(H', \psi) = \swapping(H, \varphi, K)$ has two cycles $C_1, C_2$ sharing at least one vertex. Then the genus distributions of the $H$-linear family $\graphseq(H, \varphi)$ and the $H$-circular family $\graphseq^\circ(H, \varphi)$ are both asymptotically normal.
\end{conj}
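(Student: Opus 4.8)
Fix a minimal cut $K$ as in the hypothesis and write $(H',\psi)=\swapping(H,\varphi,K)$. The plan is to realize the $H$-circular family as a (generalized) bar-ring whose repeated block is $H'$, and then run the perturbative machinery of \Cref{sec:perturbation}. The first step is the structural identification. Cutting each copy of $H$ inside $G^\circ_n$ along its copy of $K$ splits that copy into a piece $A$ containing $U_1$ and a piece $B$ containing $U_2$, the newly created cut-vertices being pendant; minimality of $K$ guarantees this clean two-sided separation. Because consecutive copies are glued by identifying $U_2$ of one copy with $U_1$ of the next along $\varphi$, regrouping ``the $B$ of copy $i$ together with the $A$ of copy $i+1$'' reconstitutes exactly one copy of $H'$, while the $m:=|K|$ cut edges reconnect consecutive $H'$-blocks as prescribed by $\psi$. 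Thus $G^\circ_n$ is a cyclic chain of $n$ copies of $H'$ joined through $m$ paired bridges at each junction, and the $H$-linear family $G_n$ is the corresponding open (``bar-path'') chain whose two ends are capped by the leftover pieces $A$ and $B$.

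Next I would transfer this to genus polynomials. When $m=1$ the chain is literally a bar-ring in the sense of \Cref{lem:bar-ring-genus} with repeated component $H'$, so the task is to generalize that lemma to $m>1$: one records, for the $2m$ dart-sides incident to the cut edges at a junction, the pattern of which sides share a common face. This produces a decomposition
\[
  \Gamma_{G^\circ_n}(x)=P_n(x)+Q_n(x),
\]
in which $P_n$ gathers the embeddings whose reconnection faces are all ``generic'' (pairwise distinct across each cut), so that the $n$ blocks decouple and $P_n$ factors as a product of $n$ identical per-block genus polynomials times an explicit power of $x$, while $Q_n$ collects every pattern where some faces coincide across a cut.

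The hypothesis controls both pieces. By \Cref{rmk:genus-discrepany-struct}, that $H'$ has two cycles $C_1,C_2$ sharing a vertex is equivalent to $\gamma_{\max}(H')>\gamma_{\min}(H')$, i.e. $H'$ is not a cactus, so $\gamma_{H'}$ has strictly positive variance. Exactly as in the proof of \Cref{thm:general-bar-ring}, one extracts a constant $c>0$ bounding below the fraction of block-embeddings with the generic face pattern; this simultaneously gives $B_n^2=\Theta(n)$ for the variance and $\Sigma(Q_n)=o(P_n(1))$. The random variable attached to $P_n$ is then a sum of $n$ independent identically distributed per-block genus contributions, each bounded by the fixed number of edges of $H'$, so \Cref{prop:zhang-4} (used as for \Cref{thm:pp-1}) yields its asymptotic normality, and \Cref{lem:gen-fct-perturbe} transfers normality to $\Gamma_{G^\circ_n}$. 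The $H$-linear family obeys the same decomposition with one fewer reconnection and an $O(1)$ shift of the mean from the capped ends, so the identical argument applies.

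The main obstacle is twofold. First is the generalized bar-ring formula for $m=|K|>1$, which is already forced in natural cases — for instance the minimal cut between the two glued vertices of a double-edge block has $m=2$: the binary distinct/coincident bookkeeping of \Cref{lem:bar-ring-genus} must be replaced by a full face-incidence analysis of the $2m$ cut-dart-sides, and one must prove that the generic pattern both factorizes and dominates while all coincidence patterns sum to $o(P_n(1))$. Second, and more delicate, is matching the hypothesis to this quantitative bound: the proof of \Cref{thm:general-bar-ring} needs a cycle running through the reconnection vertices whose directed run borders a face, whereas the swapping condition only supplies two sharing cycles somewhere in $H'$. One must therefore show that these cycles survive the cut, lie in the 2-core, and can be twisted independently of the reconnection so as to push the generic-face fraction above a constant $c>0$. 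Pinning down this positioning is presumably the reason the statement is left as a conjecture.
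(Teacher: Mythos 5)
The statement you are trying to prove is \Cref{conj:H-family-normal-asympt}, which the paper leaves \emph{open}: there is no proof in the paper to compare against. The authors even name the obstruction right after stating it --- one must control the interaction between the initial vector $\mathbf{A}_1$ and the transfer operator $\mathbf{T}_H$ of \Cref{thm:H-linear-rational} well enough to verify the hypotheses of the quasi-powers/meromorphic scheme (Theorem~IX.9 of \cite{flajolet}), in particular that the dominant simple pole of $F(t,1)=A(t,1)/B(t,1)$ is not cancelled by the numerator and that the relevant Markov chain is irreducible; \Cref{prob:stochastic-matrix-primitive} and \Cref{prop:not-prim} show this irreducibility can genuinely fail for natural transfer matrices in this setting. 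So your proposal should be judged as a new attempt, and as you yourself concede in the last paragraph, it is a strategy outline with unproven steps, not a proof.

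The gaps are not peripheral; they are the conjecture's actual content, repackaged. First, the clean two-term decomposition of \Cref{lem:bar-ring-genus} is special to junctions consisting of a \emph{single} bridge between pendant vertices. For a cut of size $m=|K|\geq 2$, the bookkeeping at each junction is a full transfer matrix indexed by face-incidence patterns of the $2m$ edge-ends (compare the ten-type analysis in \Cref{apdx:double-cycle-euler-genus}, needed already for two degree-$2$ vertices); the claim that one ``generic'' pattern both factorizes and dominates, i.e.\ $\Sigma(Q_n)=o(P_n(1))$, is precisely a statement that the dominant eigenvalue of this matrix is simple and attained on an irreducible block --- the same spectral fact the paper says it cannot establish in general. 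Your plan does not give an argument for it; it transfers the difficulty from \cite[Theorem~IX.9]{flajolet} into an unproved dominance estimate for \Cref{lem:gen-fct-perturbe}. Second, even conditionally on the generic pattern, the blocks do not decouple into i.i.d.\ genus contributions as claimed: with $m\geq 2$ parallel junction edges, the edges at a single junction can themselves be embedded in ways that create handles (their genus contribution depends on their mutual rotation, not only on the faces they attach to), so $P_n$ is not a product of $n$ identical per-block polynomials times a fixed power of $x$, and \Cref{prop:zhang-4} does not apply as stated. Third, as you note, the hypothesis only provides two vertex-sharing cycles somewhere in $H'$, and converting that into the uniform constant $c>0$ of \Cref{thm:general-bar-ring} requires positioning those cycles relative to the cut, which is not done. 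Since each of these three points is exactly where the paper's authors stopped, your proposal does not close the conjecture.
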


The main obstacle in proving \Cref{conj:H-family-normal-asympt} is exactly the existence of an irreducible Markov chain associated to the linear system deduced from $H$ and $\varphi$. More concretely, we know that the genus generating function $F(t, x)$ can be written as $A(t,x) / B(t,x)$, and to apply \cite[Theorem~IX.9]{flajolet}, we need to check that the simple pole $t = \rho$ of $F(t, 1)$ is not a root of $A(t,1)$, which needs a precise understanding of $A(t, u)$, which in turns comes from the interaction between the first element $\mathbb{A}_1$ and the operator $\mathbf{T}_H$ in the proof of \Cref{thm:H-linear-rational}. However, for the general case, it seems to be hard to extract the information we need.

\printbibliography

\newpage

\appendix
\section{Appendix} \label{apdx:double-cycle-euler-genus}
\subsection{Recurrence relations}

Let $G(s,t)$ denote a connected graph with two vertices $s$ and $t$, each having a degree of $2$. Let $S_i$ represent the surface with an Euler-genus of $i$. The standard procedure for calculating the Euler-genus distribution of $G(s,t)$ divides it into 10 general types, similar to the (partial) genus distribution introduced by Gross \cite{Gr11}.

\begin{description}
\item[Type $dd^0(G)$ ($dd^0$ for short):] This refers to the number of embeddings of $G(s,t)$ on $S_i$ where two distinct faces are incident on $u$ and two on $v,$ with the additional condition that all four faces are distinct.
\item[Type $ds_i^0(G)$:]  the number of embeddings of $G(s,t)$ on $S_i$ where two distinct faces are incident on $u$ and one on $v,$ with the additional condition that all three faces are distinct.
\item[Type $sd_i^0(G)$:] the number of embeddings of $G(s,t)$ on $S_i$ where two distinct faces are incident on vertex $v$, one face is incident on vertex $u$, and all three faces are distinct.
\item[Type $ss_i^0(G)$:] the number of embeddings of $G(s,t)$ on $S_i$ where one face is incident on vertex $v,$ another face is incident on vertex $u,$ and both faces are distinct.
\item[Type $dd_i^\prime(G)$:] denotes the count of embeddings of $G(s,t)$ on $S_i$ where two distinct faces are incident on vertex $v$ and two on vertex $u$, with the additional condition that one face is incident on both vertices $u$ and $v$.
\item[Type $dd_i^{\prime\prime}(G)$:] represents the number of embeddings of $G(s,t)$ on surface $S_i$ in which two distinct faces are incident on vertex $v$ and two on vertex $u,$ with the further requirement that the two faces incident on vertex $u$ are also incident on vertex $v.$
\item[Type ds$_i^\prime(G)$:] signifies the count of embeddings of $G(s,t)$ onto surface $S_i$, where two distinct faces are incident to vertex $u$ and one to vertex $v,$ while additionally ensuring that the face incident to $v$ is also incident to $u.$
\item[Type $sd_i^\prime(G)$:] denotes the count of embeddings of $G(s,t)$ on $S_i$ where two distinct faces are incident on vertex $v$ and one on vertex $u$, with the additional condition that the face incident on vertex $u$ is also incident on vertex $v$.
\item[Type $ss_i^1(G)$:] This refers to the number of embeddings of $G(s,t)$ on $S_i$ where one face is incident on vertex $v$ and another on vertex $u$, and the face pattern can be traced to $uuvv.$
\item[Type $ss_i^2(G)$:] This refers to the number of embeddings of $G(s,t)$ on $S_i$ where one face is incident on vertex $v$ and another on vertex $u$, and the face pattern can be traced to $uvuv.$
\end{description}
For $i\geq 0,$ it is evident that the number of embeddings $\eulergenus_i(G)$ of $G$ on $S_i$ is the sum of its ten partial genus distributions: $dd_{i}^{0}$, $ds_{i}^{0}$, $sd_{i}^{0}$, $ss_{i}^{0}$, $dd_{i}^{\prime}$,  and so on.

We define the vector $\left(
\begin{array}{cccccccccc}
 dd_i^0 & ds_i^0 & sd_i^0 & ss_i^0 & dd_i^\prime & dd_i^{\prime\prime} & ds_i^\prime & sd_i^\prime & ss_i^1 & ss_i^2 \\
\end{array}
\right)$ as the \textit{partial Euler-genus distribution vector} for $G$, abbreviated as pEd-vector.

Suppose that $H(u,v)$ is a connected graph with two degree $2$ vertices $u$ and $v$. We denote the new graph obtained from  $G(s,t)$ and $H(u,v)$ by identifying the vertices $t$ and $u$ as $W(s,v)=G(s,t)\ast H(u,v)$. Similarly, the embedded types in $W(s,v)$ are also divided into ten types. Tables 1-4 explain in detail how to obtain the embedded types of $W(s,v)$ from the ten embedded types of $G(s,t)$ and $H(u,v).$ It is important to note that each embedding of both  $G(s,t)$ and $H(u,v)$ will produce six distinct embeddings of $W(s,v).$ The data in Tables 1-4 are obtained using the face-tracing algorithm, which requires a total of $600$ operations $(6\times 10 \times 10).$

Now, let $G(u_i,v_i)$ be a copy of $H(u,v)$. An $H(u,v)$-linear graph, denoted as $G_n(u_1,v_n)$, is constructed from a sequence of $n$ graphs: $G(u_1, v_1), G(u_2, v_2), $ $\ldots, G(u_n, v_n)$ by pasting vertices $v_j$ and $u_{j+1}$ for all values of $j$ where $1$ is less than or equal to $j$ which in turn is less than $n-1.$

Let $dd_n^0(x), ds_n^0(x), sd_i^0(x), ss_i^0(x), dd_i^\prime(x), dd_i^{\prime\prime}(x), ds_i^\prime(x), sd_i^\prime(x),  ss_i^1(x), ss_i^2(x)$ denote the ten partial genus polynomials of $G_n(u_1,v_n).$ Upon transforming Tables 1-4  into the form of generating functions, we derive the following recursive equations for the partial genus polynomials of $G_n(u_1,v_n)$.
\begingroup
\allowdisplaybreaks
\begin{align*}
dd_n^0(x)=& \ dd_{n-1}^0(x)\left[(4+2z^2)dd^0(x)+6sd^0(x)+(4+2z^2)dd^{\prime}(x)+4dd^{\prime\prime}(x)+6sd^{\prime}(x)\right]\\
&+ds_{n-1}^0(x)\left[6dd^0(x)+6sd^0(x)+6dd^{\prime}(x)+6dd^{\prime\prime}(x)+6sd^{\prime}(x)\right]\\
&+dd_{n-1}^{\prime}(x)\left[(4+2z^2)dd^0(x)+6sd^0(x)+3dd^{\prime}(x)+2dd^{\prime\prime}(x)+3sd^{\prime}(x)\right] \\    &+dd_{n-1}^{\prime\prime}(x)\left[(4+2z^2)dd^0(x)+6sd^0(x)+2dd^{\prime}(x)\right]\\
&+ds_{n-1}^{\prime}(x)\left[6dd^0(x)+6sd^0(x)+3dd^{\prime}(x)\right] \\
ds_n^0(x)=&\ dd_{n-1}^0(x) [(4+2z^2)ds^0(x)+6ss^{0}(x)+2z^2dd^{\prime\prime}(x)+(4+2z^2)ds^{\prime}\\& +6ss^1(x)+(4+2z)ss^2(x)]\\
&+ds_{n-1}^0(x)\left[6ds^0(x)+6ss^0(x)+6ds^{\prime}(x)+6ss^{1}(x)+6ss^{2}(x)\right]\\
&+dd_{n-1}^{\prime}(x)\left[(4+2z^2)ds^0(x)+6ss^0(x)+3ds^{\prime}(x)+3ss^{1}(x)+2ss^{2}(x)\right] \\
&+dd_{n-1}^{\prime\prime}(x)\left[(4+2z^2)ds^0(x)+6ss^0(x)+2ds^{\prime}(x)\right]\\
&+ds_{n-1}^{\prime}(x)\left[6ds^0(x)+6ss^0(x)+3ds^{\prime}(x)\right] \\
 sd_n^0(x)=&\ sd_{n-1}^0(x)\left[(4+2z^2)dd^0(x)+6sd^0(x)+(4+2z)dd^{\prime}(x)+4dd^{\prime\prime}(x)+6sd^{\prime}(x)\right]\\
 &+ss_{n-1}^0(x)\left[6dd^0(x)+6sd^0(x)+6dd^{\prime}(x)+6dd^{\prime\prime}(x)+6sd^{\prime}(x)\right]\\
 &+sd_{n-1}^{\prime}(x)\left[(4+2z^2)dd^0(x)+6sd^0(x)+2dd^{\prime\prime}(x)\right]\\
 &+ss_{n-1}^{1}(x)\left[(6dd^0(x)+6sd^0(x)+3dd^{\prime}(x)\right]\\
 &+ss_{n-1}^{2}(x)\left[(4+2z^2)dd^0(x)+6sd^0(x)+2dd^{\prime}(x)\right] \\
  ss_{n}^0(x)=&\ sd_{n-1}^0(x)[(4+2z^2)ds^0(x)+6ss^0(x)+2dd^{\prime\prime}(x)+(4+2z^2)ds^{\prime}(x)\\&+6ss^1(x)+(4+2z)ss^2(x)]\\
&+ss_{n-1}^0(x)\left[6ss^0(x)+6ds^0(x)+6ds^{\prime}(x)+6ss^1(x)+6ss^2(x) \right]\\
&+sd_{n-1}^{\prime}(x)\left[6ss^0(x)+(4+2z^2)ds^0(x)+2ds^{\prime}(x)+3ss^1(x)+2ss^2(x)\right]\\
&+ss_{n-1}^{1}(x)\left[6ss^0(x)+6ds^0(x)+3ds^{\prime}(x)\right]\\
&+ss_{n-1}^{2}(x)\left[6ss^0(x)+(4+2z)ds^0(x)+2ds^{\prime}(x)\right]\\
 dd_n^{\prime}(x)=&dd_{n-1}^{\prime}(x)\left[(1+2z^2)dd^{\prime}(x)+2dd^{\prime\prime}(x)+3sd^{\prime}\right]\\
 &+dd_{n-1}^{\prime\prime}(x)\left[2dd^{\prime}(x)+4dd^{\prime\prime}(x)+6sd^{\prime}\right]\\
 &+ds_{n-1}^{\prime}(x)\left[3dd^{\prime}(x)+6dd^{\prime\prime}(x)+6sd^{\prime}\right]\\
 dd_n^{\prime\prime}(x)=&\ \frac{2}{z}ss_{n-1}^2(x)ss^2(x)\\
 ds_n^{\prime}(x)=&\ dd_{n-1}^{\prime}(x)\left[(1+2z^2)ds^{\prime}(x)+3ss^1(x)+(2+2z)ss^2(x)+2z^2dd^{\prime\prime}(x)\right]\\
 &+dd_{n-1}^{\prime\prime}(x)\left[6ss^1(x)+2ds^{\prime}(x)+4ss^2(x)\right]\\
 &+ds_{n-1}^{\prime}(x)\left[3ds^{\prime}(x)+6ss^1(x)+6ss^2(x)\right]\\
 sd_n^{\prime}(x)=&\ dd_{n-1}^{\prime\prime}(x)\left[2z^2dd^{\prime}(x)\right]\\
 &+sd_{n-1}^{\prime}(x)\left[(1+2z^2)dd^{\prime}(x)+2dd^{\prime\prime}(x)+6sd^{\prime}(x)\right]\\
 &+ss_{n-1}^1(x)\left[3dd^{\prime}(x)+6dd^{\prime\prime}(x)+6sd^{\prime}(x)\right]\\
 &+ss_{n-1}^2(x)\left[(2+2z)dd^{\prime}(x)+4dd^{\prime\prime}(x)+6sd^{\prime}(x)\right]\\
 ss_n^1(x)=&\ dd_{n-1}^{\prime\prime}(x)\left[2z^2ds^{\prime}(x)\right]\\
 &+sd_{n-1}^{\prime}\left[2z^2dd^{\prime\prime}(x)+(2+2z^2)ds^{\prime}(x)+3ss^1(x)+(2+2z)ss^2(x)\right]\\
 &+ss_{n-1}^1(x)\left[3ds^{\prime}(x)+6ss^1(x)+6ss^2(x)\right]\\
 &+ss_{n-1}^2(x)\left[2ds^{\prime}(x)+6ss^1(x)+4ss^2(x)\right]\\
 ss_n^2(x)=&\ dd_{n-1}^{\prime\prime}(x)\left[2z^2dd^{\prime\prime}(x)+2zss^{\prime}(x)\right]\\
 &+ss_{n-1}^2(x)\left[2zdd^{\prime\prime}(x)+2zds^{\prime}(x)\right]\\
\end{align*}
\endgroup
Let $$V_n=(dd_n^0(x), ds_n^0(x), sd_i^0(x), ss_i^0(x), dd_i^\prime(x), dd_i^{\prime\prime}(x), ds_i^\prime(x), sd_i^\prime(x),  ss_i^1(x), ss_i^2(x)),$$ then the recursive equations for the partial genus polynomials of $G_n(u_1,v_n)$  can be transformed into matrix form as $$V_n(x)=V_{n-1}M(x)$$ where $M(x)$ is a 10th order matrix. We refer to $M(X)$ as the \textit{transfer matrix} (or \textit{production matrix}) of $G_n(u_1,v_n).$

If we define $H(u,v)$ as the graph $P_{2}^2(u,v)$, then $G_n(u_1,v_n)$ is equivalent to the graph $P_n^2.$ There are two embeddings of $P_2^2$, one embeds on plane and the other embeds on projective plane. Thus, the pEd-vector of $P_{2}^2$ equals $$\left(
\begin{array}{cccccccccc}
 0 & 0 & 0 & 0 & 0 & 1 & 0 & 0 & 0 & x \\
\end{array}
\right),$$ which means that the production (or transfer) matrix $M(x)$ of $C_n^2$ is as follows:

\begin{align}\label{matrix}
\left (
  \begin {array} {cccccccccc} 4 & 4x +
                    4 x^2 & 0 & 0 & 0 & 0 & 0 & 0 & 0 & 0 \
\\ 6 & 6 x & 0 & 0 & 0 & 0 & 0 & 0 & 0 & 0 \\ 0 & 0 & 4 & 4x +
                    4 x^2 & 0 & 0 & 0 & 0 & 0 & 0 \\ 0 & \
0 & 6 & 6 x & 0 & 0 & 0 & 0 & 0 & 0 \\ 2 & 2 x & 0 & 0 & 2 & 0 & 2
                    x + 4 x^2 & 0 & 0 & 0 \\ 0 & 0 & 0 & \
0 & 4 & 0 & 4 x & 0 & 0 & 4 x^2 \\ 0 & 0 & 0 & 0 & 6 & 0 \
& 6 x & 0 & 0 & 0 \\ 0 & 0 & 2 & 2 x & 0 & 0 & 0 & 2 & 2x + 4 x^2 & 0 \\ 0 & 0 & 0 & 0 & 0 & 0 & 0 & 6 & 6 x & 0 \\ 0 & 0 & 0 \
& 0 & 0 & 2 & 0 & 4 & 4 x & 2 x \\\end {array} \right)
\end{align}

The characteristic polynomial of $M(z)$ can be expressed as follows:
$t^{10} + t^9 (-12 - 26 x) + t^8 (52 + 240 x + 160 x^2) +
 t^7 (-96 - 728 x - 768 x^2 + 816 x^3) +
 t^6 (64 + 768 x + 208 x^2 - 8640 x^3 - 8304 x^4) +
 t^5 (-128 x + 1920 x^2 + 21216 x^3 + 29376 x^4 - 16416 x^5) +
 t^4 (-512 x^2 - 9216 x^3 + 4992 x^4 + 165888 x^5 + 155520 x^6) +
 t^3 (-18432 x^4 - 179712 x^5 - 165888 x^6 + 359424 x^7) +
 t^2 (-239616 x^6 - 1327104 x^7 - 884736 x^8) +
 t(-1327104 x^8 - 3317760 x^9) - 2654208 x^{10}.$

According to \cite{CG18}, the Euler-genus polynomial of $C_n^2$ follows the given recursive equation.
 \begin{align}
P_{n}(x)= & (12 + 26 x)P_{n-1}(x)-(52 + 240 x + 160 x^2)P_{n-2}(x)\notag \\& -
  (-96 - 728 x - 768 x^2 + 816 x^3)P_{n-3}(x)\notag\\& -
  (64 + 768 x + 208 x^2 - 8640 x^3 - 8304 x^4)P_{n-4}(x)\notag\\& -
  (-128 x + 1920 x^2 + 21216 x^3 + 29376 x^4 - 16416 x^5)P_{n-5}(x)\notag\\& -
  (-512 x^2 - 9216 x^3 + 4992 x^4 + 165888 x^5 + 155520 x^6)P_{n-6}(x)\notag\\& -
  (-18432 x^4 - 179712 x^5 - 165888 x^6 + 359424 x^7)P_{n-7}(x)\notag\\& +
 (239616 x^6 + 1327104 x^7 + 884736 x^8)P_{n-8}(x)\notag\\& +
 (1327104 x^8 + 3317760 x^9) P_{n-9}(x)+ 2654208 x^{10}P_{n-10}(x)
 \end{align}

A square stochastic matrix $A$ is referred to as \textit{primitive} if, for some $n\geq 1$,  all entries of $A^n$ are positive.

\begin{prop}\label{prop:not-prim}The stochastic matrix of order $10$ associated with the Euler-genus distributions of double-edge cycle graphs is not primitive.
\end{prop}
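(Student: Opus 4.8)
The plan is to reduce the (apparent) analytic question of primitivity to an elementary combinatorial property of the \emph{support} of the matrix. Recall that the stochastic matrix in question is obtained from the transfer matrix $M(x)$ displayed in \eqref{matrix} by specializing $x=1$ and normalizing: one checks directly that every row of $M(1)$ sums to $12$, so the stochastic matrix is $A=\tfrac1{12}M(1)$. Since whether a nonnegative matrix satisfies $A^n>0$ for some $n$ depends only on which entries are nonzero, the normalization plays no role, and I would work directly with the zero/nonzero pattern of $M(1)$, viewed as the adjacency structure of a directed graph on the ten states $dd^0,ds^0,sd^0,ss^0,dd',dd'',ds',sd',ss^1,ss^2$.

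First I would read off the outgoing transitions of the first two states. Specializing \eqref{matrix} at $x=1$, rows $1$ and $2$ become $(4,8,0,\dots,0)$ and $(6,6,0,\dots,0)$, so both are supported entirely in columns $1$ and $2$. Hence $S=\{dd^0,ds^0\}$ is a proper, nonempty, \emph{closed} set of states: $M(1)_{ij}=0$ for every $i\in S$ and $j\notin S$. Consequently, after listing the states of $S$ first, $A$ is block lower triangular, hence reducible; equivalently, its support digraph is not strongly connected (for instance, $S$ is entered from $dd'$, since row $5$ has nonzero entries in columns $1$ and $2$, but no transition ever returns from $S$ to $dd'$).

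To finish, I would deduce non-primitivity from the closed set by a one-line induction: from $A_{ij}=0$ for $i\in S$, $j\notin S$, it follows that $(A^n)_{ij}=0$ for all $i\in S$, $j\notin S$, and all $n\ge1$. In particular $(A^n)_{1,3}=0$ for every $n$, so no power of $A$ is entrywise positive and $A$ is not primitive.

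The argument has no real obstacle to overcome; it is a direct inspection of two rows of the explicit matrix \eqref{matrix}. The only points deserving a word of care are the invariance of primitivity under the row-normalization that turns $M(1)$ into a stochastic matrix (so that the unnormalized entries may be used), and the verification that $S$ is genuinely closed, i.e.\ that \emph{both} rows $1$ and $2$, and not merely one of them, are supported in $\{1,2\}$. One may equally well take $S=\{sd^0,ss^0\}$, whose rows $3$ and $4$ are likewise supported in columns $\{3,4\}$.
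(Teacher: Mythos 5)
Your proof is correct, and it takes a genuinely different route from the paper's. The paper argues spectrally: from the characteristic polynomial of $M(x)$ it reads off the ten eigenvalues of the stochastic matrix $\frac{1}{12}M(1)$, namely $1,1,\tfrac{1}{6}(2+\sqrt{10}),\tfrac{1}{6}(2+\sqrt{10}),\tfrac{1}{3},\tfrac{1}{6}(2-\sqrt{10}),\tfrac{1}{6}(2-\sqrt{10}),-\tfrac{1}{6},-\tfrac{1}{6},-\tfrac{1}{6}$, and then invokes the Perron--Frobenius theorem: a primitive stochastic matrix has $1$ as a simple eigenvalue with all other eigenvalues of modulus strictly less than $1$, so the multiplicity-two eigenvalue $1$ rules out primitivity. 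You instead argue combinatorially on the support of the matrix: rows $1$ and $2$ of \eqref{matrix} at $x=1$ are indeed $(4,8,0,\dots,0)$ and $(6,6,0,\dots,0)$, so $S=\{dd^0,ds^0\}$ is a proper nonempty closed class (as is $\{sd^0,ss^0\}$), the matrix is reducible, and your induction shows $(A^n)_{1,3}=0$ for every $n$, so no power is entrywise positive. Both arguments are sound, and your side remarks (row sums equal to $12$, invariance of primitivity under positive scaling) check out against the displayed matrix. What each approach buys: yours is more elementary and essentially computation-free --- it inspects two rows, meets the paper's definition of primitivity head-on by exhibiting an entry that vanishes in all powers, and identifies the structural cause, namely the existence of (two) closed classes; the paper's argument requires the symbolic eigenvalue computation but yields the full spectrum, which both explains your observation (each closed class contributes one eigenvalue $1$, so the multiplicity is exactly $2$) and is the kind of quantitative information one wants downstream when extracting asymptotics from transfer matrices, as in the discussion surrounding \Cref{prob:stochastic-matrix-primitive}.
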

\begin{proof}
  According to the Perron-Frobenius theorem,  $1$ is an eigenvalue of a primitive stochastic matrix $A$ with multiplicity one. Furthermore, any other eigenvalue $\lambda$ of matrix $A$ satisfies the condition that $|\lambda| < 1$.  By (\ref{matrix}),  the ten eigenvalues of the stochastic matrix $\frac{1}{12}M(1)$ are $1,1, \frac{1}{6} \left(2+\sqrt{10}\right), $ $\frac{1}{6} \left(2+\sqrt{10}\right),\frac{1}{3},\frac{1}{6} \left(2-\sqrt{10}\right),\frac{1}{6} \left(2-\sqrt{10}\right),-\frac{1}{6},-\frac{1}{6},-\frac{1}{6},$ this implies the eigenvalue $1$ has a multiplicity of $2$ which leads to the conclusion that the stochastic matrix  $\frac{1}{12}M(1)$ is not primitive.
\end{proof}

In the following, we will calculate ten initial values for $P_n(x)$ when $1\leq n \leq 10.$


\subsection{Adding a pair of parallel edges}
Suppose $G$ is an embedded graph with two vertices $u,v$ such that $u,v$ lie on a face $F$. We designate $F$ as \textit{type 1} if we can trace the path of $u,v$ in the manner shown in Figure \ref{Type1} (left). Otherwise, we classify $F$ as \textit{type 0}, as illustrated in Figure \ref{Type1} (right).

\begin{figure}[htp]
\centering
  \includegraphics[width=1.5in]{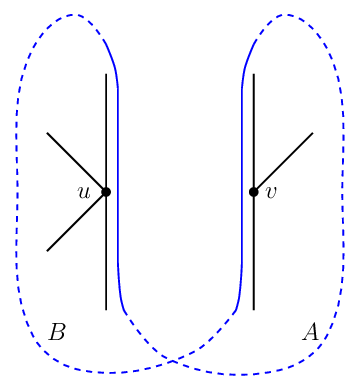}\ \ \ \ \ \ \ \ \ \ \ \
  \includegraphics[width=1.25in]{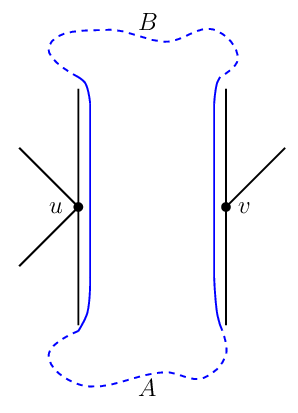}

\caption{Type $1$ (left) and Type $0$ (right).}
\label{Type1}
\end{figure}

 Let $G$ be an embedded graph with two vertices $u,v$. The following edge adding rules are fundamental principles in topological graph theory, some of which have been previously discussed in \cite{Rin77}. We categorize them into three cases for the purpose of summarization.

\begin{description}
  \item[R1]  If $u,v$ lie on two different faces $(Au), (Bv)$, respectively, then adding an edge  $uv$ (twisted edge) will merge two faces $(Au)$ and $(Bv)$ into a single face $(uAuvBv)$ in $G+uv,$ i.e., $\eulergenus(G+uv)=\eulergenus(G)+1.$

 \item[R2] If $u,v$ lie on a single face $(uBvA)$ of type $1$ (type $0$ ),  then adding a twisted edge $uv^{\times}$ (an edge $uv$) will split the face $(uBvA)$ into two different faces $(uBv)$ and $(uAv)$ in $G+uv^{\times},$ and $\eulergenus(G+uv^{\times})=\eulergenus(G)$ ($\eulergenus(G+uv)=\eulergenus(G)$).

  \item[R3] If $u,v$ lie on a face $(uBvA)$ of type $1$ (type $0$ ), then adding an edge $uv$ (a twisted edge $uv^{\times}$ ) will merge the face $(uBvA)$ and $uv$ ($uv^{\times}$) into a new face $(uBvuAv)$ in $G+uv$ ($G+uv^{\times}$) and $\eulergenus(G+uv)=\eulergenus(G)+1.$
\end{description}

Let $W(s,v)$ be the connected graph as defined above. We then add a pair of parallel edges to the vertices $s$ and $v$, resulting in a new graph denoted by $\overline{W}$. It is important to note that the degrees of both $s$ and $v$ are now 4 in $\overline{W}$. By following edge adding rules and using the face-trace algorithm, Table 5 illustrates the variation of the Euler-genus for both $\overline{W}$ and ${W}$. The third row of Table 5 demonstrates that for each embedding of type $dd^0$, there are a total of 144 embeddings for $\overline{W}$ (6 types x 6 types x 2 x 2). Out of these, the Euler-genus increases by $2$ for $32$ embeddings, by $3$ for another set of $32$ embeddings, and by $4$ for the remaining $80$ embeddings.

From Table~5, for $i\geq 2,$, the Euler-genus polynomial $P_{i}(x)$ equals $$\left(
\begin{array}{cccccccccc}
 0 & 0 & 0 & 0 & 0 & 1 & 0 & 0 & 0 & x \\
\end{array}
\right) M(x)^{i-1} \left(
\begin{array}{c}
 32x^2+32x^3+80x^4 \\
 50x^2+48x^3+46x^4 \\
 50x^2+48x^3+46x^4 \\
 72x^2+72x^2 \\
 2+6x+52x^2+44x^3+40x^4 \\
 6+16x+70x^2+52x^3 \\
 6+18x+72x^2+48x^3 \\
 6+18x+72x^2+48x^3 \\
 18+54x+72x^2 \\
 20+56x+68x^2 \\
\end{array}
\right).$$
Now, we present the initial values of $P_i(x)$ in the following list.
\begin{align*}
  P_{1}(x)&=4 + 10 x + 10 x^2 \\
  P_{2}(x)&=6 + 36 x + 126 x^2 + 120 x^3\\
  P_{3}(x)&=8+84 x+576 x^2+1444 x^3+1344 x^4\\
  P_{4}(x)&=16+208 x+1944 x^2+8128 x^3+17960 x^4+13216 x^5\\
  P_{5}(x)&=32+512 x+6304 x^2+35792 x^3+120224 x^4+208272 x^5+126528 x^6\\
  P_{6}(x)&=64 + 1216 x + 20160 x^2 + 145472 x^3 + 634528 x^4 + 1650112 x^5 \\
          & \ \ \ +2334112 x^6 + 1186304 x^7\\
  P_{7}(x)&=128 + 2816 x + 64768 x^2 + 573696 x^3 + 3042048 x^4 + 10201152 x^5\\
 & \ \ \ +21506560 x^6 + 25230656 x^7 + 11041792 x^8\\
 P_{8}(x)&=256+6400 x+213504 x^2+2261504 x^3+14003712 x^4+56356352 x^5\\ &\ \ \ +152367488 x^6+266558464 x^7+266050176 x^8+102145536 x^9\\
 P_{9}(x)&=512 + 14336 x + 730624 x^2 + 9050112 x^3 + 63676416 x^4  \\ & \ \ \ +
 294905856 x^5+ 950924288 x^6 + 2133587200 x^7 + 3176284672 x^8 \\& \ \ \  +
 2748807424 x^9+ 941579264 x^{10}\\
 P_{10}(x)&=1024 + 31744 x + 2601984 x^2 + 36924416 x^3 + 289603584 x^4\\ &\ \ \  +
 1503739904 x^5+ 5549844480 x^6 + 14842849280 x^7 +
 28366170624 x^8 \\ &\ \ \  + 36636175360 x^9 + 27954014720 x^{10} +
 8652771328 x^{11}\\
\end{align*}

\begin{table}[!ht]
    \centering
    \begin{tabular}{c|c|l}
        \hline
        $H(s,t)$ & $G(u,v)$ & $W(s,v)=G(s,t)\ast H(u,v)$ \\
      \hline
        \multirow{10}{*}{$dd^0$}
        & $dd_i^0$ & $dd_i^0 \ast dd_j^0 \rightarrow 4dd_{i+j}^0 + 2dd_{i+j+2}^0 $ \\
        & $ds_i^0$ & $ds_i^0 \ast dd_j^0 \rightarrow 6dd_{i+j}^0$ \\
        & $sd_i^0$ & $sd_i^0 \ast dd_j^0 \rightarrow 4sd_{i+j}^0 + 2sd_{i+j+2}^0$ \\
        & $ss_i^0$ & $ss_i^0 \ast dd_j^0 \rightarrow 6sd_{i+j}^0$ \\
        & $dd_i^\prime$ & $dd_i^\prime \ast dd_j^0 \rightarrow 4dd_{i+j}^0 + 2dd_{i+j+2}^0$ \\
        & $dd_i^{\prime\prime}$ & $dd_i^{\prime\prime} \ast dd_j^0 \rightarrow 4dd_{i+j}^0 + 2dd_{i+j+2}^0$ \\
        & $ds_i^\prime$ & $ds_i^\prime \ast dd_j^0 \rightarrow 6dd_{i+j}^0$ \\
        & $sd_i^\prime$ & $sd_i^\prime \ast dd_j^0 \rightarrow 4sd_{i+j}^0 + 2sd_{i+j+2}^0$ \\
        & $ss_i^1$ & $ss_i^1 \ast dd_j^0 \rightarrow 6sd_{i+j}^0$ \\
        & $ss_i^2$ & $ss_i^2 \ast dd_j^0 \rightarrow 4sd_{i+j}^0 + 2dd_{i+j}^0$ \\
        \hline
        \multirow{10}{*}{$ds^0$}
        & $dd_i^0$ & $dd_i^0 \ast ds_j^0 \rightarrow 4ds_{i+j}^0 + 2ds_{i+j+2}^0 $ \\
        & $ds_i^0$ & $ds_i^0 \ast ds_j^0 \rightarrow 6ds_{i+j}^0$ \\
        & $sd_i^0$ & $sd_i^0 \ast ds_j^0 \rightarrow 4ss_{i+j}^0 + 2ss_{i+j+2}^0$ \\
        & $ss_i^0$ & $ss_i^0 \ast ds_j^0 \rightarrow 6ss_{i+j}^0$ \\
        & $dd_i^\prime$ & $dd_i^\prime \ast ds_j^0 \rightarrow 4ds_{i+j}^0 + 2ds_{i+j+2}^0$ \\
        & $dd_i^{\prime\prime}$ & $dd_i^{\prime\prime} \ast ds_j^0 \rightarrow 4ds_{i+j}^0 + 2ds_{i+j+2}^0$ \\
        & $ds_i^\prime$ & $ds_i^\prime \ast ds_j^0 \rightarrow 6ds_{i+j}^0$ \\
        & $sd_i^\prime$ & $sd_i^\prime \ast ds_j^0 \rightarrow 4ss_{i+j}^0 + 2ss_{i+j+2}^0$ \\
        & $ss_i^1$ & $ss_i^1 \ast ds_j^0 \rightarrow 6ss_{i+j}^0$ \\
        & $ss_i^2$ & $ss_i^2 \ast ds_j^0 \rightarrow 4ss_{i+j}^0 + 2ds_{i+j}^0$ \\

        \hline
        \multirow{10}{*}{$sd^0$}
        & $dd_i^0$ & $dd_i^0 \ast sd_j^0 \rightarrow 6dd_{i+j}^0$ \\
        & $ds_i^0$ & $ds_i^0 \ast sd_j^0 \rightarrow 6dd_{i+j}^0$ \\
        & $sd_i^0$ & $sd_i^0 \ast sd_j^0 \rightarrow 6sd_{i+j}^0$ \\
        & $ss_i^0$ & $ss_i^0 \ast sd_j^0 \rightarrow 6sd_{i+j}^0$ \\
        & $dd_i^\prime$ & $dd_i^\prime \ast sd_j^0 \rightarrow 6dd_{i+j}^0$ \\
        & $dd_i^{\prime\prime}$ & $dd_i^{\prime\prime} \ast sd_j^0 \rightarrow 6dd_{i+j}^0$ \\
        & $ds_i^\prime$ & $ds_i^\prime \ast sd_j^0 \rightarrow 6dd_{i+j}^0$ \\
        & $sd_i^\prime$ & $sd_i^\prime \ast sd_j^0 \rightarrow 6sd_{i+j}^0$ \\
        & $ss_i^1$ & $ss_i^1 \ast sd_j^0 \rightarrow 6sd_{i+j}^0$ \\
        & $ss_i^2$ & $ss_i^2 \ast sd_j^0 \rightarrow 6sd_{i+j}^0$ \\

     \hline
     \end{tabular}
         \caption{Partial embedding type transitions for $ dd^0 , ds^0 ,$ and $ sd^0 $.}
     \end{table}


\begin{table}[!ht]
    \centering
    \begin{tabular}{c|c|l}
        \hline
        $H(s,t)$ & $G(u,v)$ & $W(s,v)=G(s,t)\ast H(u,v)$ \\

        \hline
        \multirow{10}{*}{$ss^0$}
        & $dd_i^0$ & $dd_i^0 \ast ss_j^0 \rightarrow 6ds_{i+j}^0$ \\
        & $ds_i^0$ & $ds_i^0 \ast ss_j^0 \rightarrow 6ds_{i+j}^0$ \\
        & $sd_i^0$ & $sd_i^0 \ast ss_j^0 \rightarrow 6ss_{i+j}^0$ \\
        & $ss_i^0$ & $ss_i^0 \ast ss_j^0 \rightarrow 6ss_{i+j}^0$ \\
        & $dd_i^\prime$ & $dd_i^\prime \ast ss_j^0 \rightarrow 6ds_{i+j}^0$ \\
        & $dd_i^{\prime\prime}$ & $dd_i^{\prime\prime} \ast ss_j^0 \rightarrow 6ds_{i+j}^0$ \\
        & $ds_i^\prime$ & $ds_i^\prime \ast ss_j^0 \rightarrow 6ds_{i+j}^0$ \\
        & $sd_i^\prime$ & $sd_i^\prime \ast ss_j^0 \rightarrow 6ss_{i+j}^0$ \\
        & $ss_i^1$ & $ss_i^1 \ast ss_j^0 \rightarrow 6ss_{i+j}^0$ \\
        & $ss_i^2$ & $ss_i^2 \ast ss_j^0 \rightarrow 6ss_{i+j}^0$ \\

        \hline
        \multirow{10}{*}{$dd^\prime$}
        & $dd_i^0$ & $dd_i^0 \ast dd_j^\prime \rightarrow 4dd_{i+j}^0 + 2dd_{i+j+2}^0 $ \\
        & $ds_i^0$ & $ds_i^0 \ast dd_j^\prime \rightarrow 6dd_{i+j}^0$ \\
        & $sd_i^0$ & $sd_i^0 \ast dd_j^\prime \rightarrow 4sd_{i+j}^0 + 2sd_{i+j+2}^0$ \\
        & $ss_i^0$ & $ss_i^0 \ast dd_j^\prime \rightarrow 6sd_{i+j}^0$ \\
        & $dd_i^\prime$ & $dd_i^\prime \ast dd_j^\prime \rightarrow dd_{i+j}^\prime + 3dd_{i+j}^0 + 2dd_{i+j+2}^\prime$ \\
        & $dd_i^{\prime\prime}$ & $dd_i^{\prime\prime} \ast dd_j^\prime \rightarrow 2dd_{i+j}^0 + 2dd_{i+j}^\prime + 2sd_{i+j+2}^\prime$ \\
        & $ds_i^\prime$ & $ds_i^\prime \ast dd_j^\prime \rightarrow 3dd_{i+j}^0 + 3dd_{i+j}^\prime$ \\
        & $sd_i^\prime$ & $sd_i^\prime \ast dd_j^\prime \rightarrow sd_{i+j}^\prime + 3sd_{i+j}^0 + 2sd_{i+j+2}^0$ \\
        & $ss_i^1$ & $ss_i^1 \ast dd_j^\prime \rightarrow 3sd_{i+j}^0 + 3sd_{i+j}^\prime$ \\
        & $ss_i^2$ & $ss_i^2 \ast dd_j^\prime \rightarrow 2sd_{i+j}^0 + 2sd_{i+j}^\prime + 2dd_{i+j}^0$ \\

        \hline
        \multirow{10}{*}{$dd^{\prime\prime}$}
        & $dd_i^0$ & $dd_i^0 \ast dd_j^{\prime\prime} \rightarrow 4dd_{i+j}^0 + 2ds_{i+j+2}^0 $ \\
        & $ds_i^0$ & $ds_i^0 \ast dd_j^{\prime\prime} \rightarrow 6dd_{i+j}^0$ \\
        & $sd_i^0$ & $sd_i^0 \ast dd_j^{\prime\prime} \rightarrow 4sd_{i+j}^0 + 2ss_{i+j+2}^0$ \\
        & $ss_i^0$ & $ss_i^0 \ast dd_j^{\prime\prime} \rightarrow 6sd_{i+j}^0$ \\
        & $dd_i^\prime$ & $dd_i^\prime \ast dd_j^{\prime\prime} \rightarrow 2dd_{i+j}^0 + 2dd_{i+j}^\prime + 2ds_{i+j+2}^\prime$ \\
        & $dd_i^{\prime\prime}$ & $dd_i^{\prime\prime} \ast dd_j^{\prime\prime} \rightarrow 4dd_{i+j}^\prime + 2ss_{i+j+2}^2$ \\
        & $ds_i^\prime$ & $ds_i^\prime \ast dd_j^{\prime\prime} \rightarrow 6dd_{i+j}^\prime$ \\
        & $sd_i^\prime$ & $sd_i^\prime \ast dd_j^{\prime\prime} \rightarrow 2sd_{i+j}^\prime + 2sd_{i+j}^0 + 2ss_{i+j+2}^1$ \\
        & $ss_i^1$ & $ss_i^1 \ast dd_j^{\prime\prime} \rightarrow 6sd_{i+j}^\prime$ \\
        & $ss_i^2$ & $ss_i^2 \ast dd_j^{\prime\prime} \rightarrow 4sd_{i+j}^\prime + 2dd_{i+j}^{\prime\prime}$ \\
       \hline
    \end{tabular}
   \\ 
    \caption{Partial embedding type transitions for $ ss^0, dd^\prime ,$ and $ dd^{\prime\prime}. $} \label{mult}
\end{table}


\begin{table}[!ht]
    \centering
    \begin{tabular}{c|c|l}
        \hline
        $H(s,t)$ & $G(u,v)$ & $W(s,v)=G(s,t)\ast H(u,v)$ \\

        \hline
        \multirow{10}{*}{$ds^\prime$}
        & $dd_i^0$ & $dd_i^0 \ast ds_j^\prime \rightarrow 4ds_{i+j}^0 + 2ds_{i+j+2}^0 $ \\
        & $ds_i^0$ & $ds_i^0 \ast ds_j^\prime \rightarrow 6ds_{i+j}^0$ \\
        & $sd_i^0$ & $sd_i^0 \ast ds_j^\prime \rightarrow 4ss_{i+j}^0 + 2ss_{i+j+2}^0$ \\
        & $ss_i^0$ & $ss_i^0 \ast ds_j^\prime \rightarrow 6ss_{i+j}^0$ \\
        & $dd_i^\prime$ & $dd_i^\prime \ast ds_j^\prime \rightarrow ds_{i+j}^\prime + 3ds_{i+j}^0 + 2ds_{i+j+2}^\prime$ \\
        & $dd_i^{\prime\prime}$ & $dd_i^{\prime\prime} \ast ds_j^\prime \rightarrow 2ds_{i+j}^0 + 2ds_{i+j}^\prime + 2ss_{i+j+2}^1$ \\
        & $ds_i^\prime$ & $ds_i^\prime \ast ds_j^\prime \rightarrow 3ds_{i+j}^0 + 3ds_{i+j}^\prime$ \\
        & $sd_i^\prime$ & $sd_i^\prime \ast ds_j^\prime \rightarrow 2ss_{i+j}^\prime + 2ss_{i+j}^0 + 2ss_{i+j+2}^1$ \\
        & $ss_i^1$ & $ss_i^1 \ast ds_j^\prime \rightarrow 6sd_{i+j}^\prime$ \\
        & $ss_i^2$ & $ss_i^2 \ast ds_j^\prime \rightarrow 3ss_{i+j}^0 + 3ss_{i+j}^1$ \\

        \hline
        \multirow{10}{*}{$sd^\prime$}
        & $dd_i^0$ & $dd_i^0 \ast sd_j^\prime \rightarrow 6dd_{i+j}^0$ \\
        & $ds_i^0$ & $ds_i^0 \ast sd_j^\prime \rightarrow 6dd_{i+j}^0$ \\
        & $sd_i^0$ & $sd_i^0 \ast sd_j^\prime \rightarrow 6sd_{i+j}^0$ \\
        & $ss_i^0$ & $ss_i^0 \ast sd_j^\prime \rightarrow 6sd_{i+j}^0$ \\
        & $dd_i^\prime$ & $dd_i^\prime \ast sd_j^\prime \rightarrow 3dd_{i+j}^0 + 3dd_{i+j}^\prime$ \\
        & $dd_i^{\prime\prime}$ & $dd_i^{\prime\prime} \ast sd_j^\prime \rightarrow 6dd_{i+j}^\prime$ \\
        & $ds_i^\prime$ & $ds_i^\prime \ast sd_j^\prime \rightarrow 6sd_{i+j}^\prime$ \\
        & $sd_i^\prime$ & $sd_i^\prime \ast sd_j^\prime \rightarrow 6dd_{i+j}^\prime$ \\
        & $ss_i^1$ & $ss_i^1 \ast sd_j^\prime \rightarrow 6sd_{i+j}^\prime$ \\
        & $ss_i^2$ & $ss_i^2 \ast sd_j^\prime \rightarrow 6sd_{i+j}^\prime$ \\

        \hline
    \end{tabular}
    \caption{Partial embedding type transitions for $ ds^\prime$ and $ sd^\prime$.}
    \end{table}


\begin{table}[!ht]
    \centering
    \begin{tabular}{c|c|l}
        \hline
        $H(s,t)$ & $G(u,v)$ & $W(s,v)=G(s,t)\ast H(u,v)$ \\
        \hline
        \multirow{10}{*}{$ss^1$}
        & $dd_i^0$ & $dd_i^0 \ast ss_j^1 \rightarrow 6ds_{i+j}^0$ \\
        & $ds_i^0$ & $ds_i^0 \ast ss_j^1 \rightarrow 6ds_{i+j}^0$ \\
        & $sd_i^0$ & $sd_i^0 \ast ss_j^1 \rightarrow 6ss_{i+j}^0$ \\
        & $ss_i^0$ & $ss_i^0 \ast ss_j^1 \rightarrow 6ss_{i+j}^0$ \\
        & $dd_i^\prime$ & $dd_i^\prime \ast ss_j^1 \rightarrow 3ds_{i+j}^0 + 3ds_{i+j}^\prime$ \\
        & $dd_i^{\prime\prime}$ & $dd_i^{\prime\prime} \ast ss_j^1 \rightarrow 6ds_{i+j}^\prime$ \\
        & $ds_i^\prime$ & $ds_i^\prime \ast ss_j^1 \rightarrow 6ds_{i+j}^\prime$ \\
        & $sd_i^\prime$ & $sd_i^\prime \ast ss_j^1 \rightarrow 3ss_{i+j}^0 + 3ss_{i+j}^1$ \\
        & $ss_i^1$ & $ss_i^1 \ast ss_j^1 \rightarrow 6ss_{i+j}^1$ \\
        & $ss_i^2$ & $ss_i^2 \ast ss_j^1 \rightarrow 6ss_{i+j}^1$ \\

        \hline
        \multirow{10}{*}{$ss^2$}
        & $dd_i^0$ & $dd_i^0 \ast ss_j^2 \rightarrow 4ds_{i+j}^0 + 2dd_{i+j}^0 $ \\
        & $ds_i^0$ & $ds_i^0 \ast ss_j^2 \rightarrow 6ds_{i+j}^0$ \\
        & $sd_i^0$ & $sd_i^0 \ast ss_j^2 \rightarrow 4ss_{i+j}^0 + 2sd_{i+j}^0$ \\
        & $ss_i^0$ & $ss_i^0 \ast ss_j^2 \rightarrow 6ss_{i+j}^0$ \\
        & $dd_i^\prime$ & $dd_i^\prime \ast ss_j^2 \rightarrow 2ds_{i+j}^\prime + 2ds_{i+j}^0 + 2dd_{i+j}^\prime$ \\
        & $dd_i^{\prime\prime}$ & $dd_i^{\prime\prime} \ast ss_j^2 \rightarrow 4ds_{i+j}^\prime + 2dd_{i+j}^\prime$ \\
        & $ds_i^\prime$ & $ds_i^\prime \ast ss_j^2 \rightarrow 6ds_{i+j}^\prime$ \\
        & $sd_i^\prime$ & $sd_i^\prime \ast ss_j^2 \rightarrow 2ss_{i+j}^\prime + 2ss_{i+j}^0 + 2sd_{i+j}^\prime$ \\
        & $ss_i^1$ & $ss_i^1 \ast ss_j^2 \rightarrow 6ss_{i+j}^1$ \\
        & $ss_i^2$ & $ss_i^2 \ast ss_j^2 \rightarrow 4ss_{i+j}^1 + 2ss_{i+j}^2$ \\

        \hline
    \end{tabular}
     \caption{Partial embedding type transitions for
$ ss^1$ and $ ss^2$.}
   
\end{table}

\begin{table}
\centering
    \vskip 0.3cm
\begin{tabular}{|c|c|c|c|c|c|}
 \hline
        \multicolumn{1}{|c|}{\multirow{2}{*}{  $W(s,v)$}}&\multicolumn{5}{c|}{ The Euler-genus of $\overline{W}$}\\ \cline{2-6}
 &$\eulergenus(W)$  & $\eulergenus(W)+1$ & $\eulergenus(W)+2$ & $\eulergenus(W)+3$ & $\eulergenus(W)+4$ \\ \hline
  $dd^0$ &  &  & 32 & 32 & 80 \\ \hline
  $ds^0$ or $sd^0$ &  &  & 50  & 48 & 46 \\ \hline
  $ss^0$ &  &  & 72 & 72 &  \\ \hline
  $dd^\prime$  & 2 & 6 & 52  & 44 & 40 \\ \hline
  $dd^{\prime\prime}$ & 6  &  16& 70 & 52 &  \\ \hline
  $ds^\prime$ or $sd^\prime$ & 6 & 18 & 72 & 48 &  \\ \hline
  $ss^1$  & 18 & 54 & 72 &  &  \\ \hline
  $ss^2$  & 20 & 56 & 68 &  &  \\
    \hline
\end{tabular}
\caption{The Euler-genus of $\overline{W}$.}

\end{table}
\end{document}